\def\titlerunning#1{\gdef\titrun{#1}}
\def\author#1{\gdef\autrun{\def\and{\unskip, }#1}\gdef\@author{#1}}
\def\address#1{{\def\and{\\\hspace*{18pt}}\renewcommand{\thefootnote}{}%
\footnote {#1}}
\markboth{\autrun}{\titrun}}
\def\subjclass#1{{\renewcommand{\thefootnote}{}%
\footnote{\emph{Mathematics Subject Classification (2010):} #1}}}
\def\keywords#1{\par\medskip
\noindent\textbf{Keywords.} #1}
\newcommand{\QQ}{\mathbb{Q}}
\newcommand{\ZZ}{\mathbb{Z}}
\newcommand{\A}{\mathbb{A}}
\newcommand{\PP}{\mathbb{P}}
\newcommand{\RR}{\mathbb{R}}
\newcommand{\FF}{\mathbb{F}}
\newcommand{\GG}{\mathbb{G}}
\newcommand{\cA}{\mathcal{A}}
\newcommand{\cB}{\mathcal{B}}
\newcommand{\m}{\mathfrak{m}}
\newcommand{\F}{\mathcal{F}}
\newcommand{\G}{\mathcal{G}}
\newcommand{\U}{\mathcal{U}}
\newcommand{\V}{\mathcal{V}}
\newcommand{\E}{\mathcal{E}}
\newcommand{\T}{\mathcal{T}}
\newcommand{\Q}{\mathcal{Q}}
\newcommand{\B}{\mathcal{B}}
\newcommand{\I}{\mathcal{I}}
\newcommand{\J}{\mathcal{J}}
\newcommand{\OO}{\mathcal{O}}
\newcommand{\Z}{\mathcal{Z}}
\newcommand{\X}{\mathcal{X}}
\newcommand{\Y}{\mathcal{Y}}
\newtheorem{thm}{Theorem}[subsection]
\newtheorem{cor}[thm]{Corollary}
\newtheorem{lem}[thm]{Lemma}
\newtheorem{prop}[thm]{Proposition}
\theoremstyle{definition}
\newtheorem{define}[thm]{Definition}
\newtheorem{const}[thm]{Construction}
\newtheorem{assume}[thm]{Assumption}
\theoremstyle{remark}
\newtheorem{rem}[thm]{Remark}
\numberwithin{equation}{section}
\numberwithin{thm}{subsection}
\numberwithin{equation}{subsection}
\DeclareMathOperator{\Ker}{Ker}
\DeclareMathOperator{\Pic}{Pic}
\DeclareMathOperator{\Br}{Br}
\DeclareMathOperator{\loc}{loc}
\DeclareMathOperator{\Tr}{Tr}
\DeclareMathOperator{\divv}{div}
\DeclareMathOperator{\ev}{ev}
\DeclareMathOperator{\df}{def}
\DeclareMathOperator{\N}{N}
\DeclareMathOperator{\disc}{disc}
\DeclareMathOperator{\val}{val}
\DeclareMathOperator{\Sel}{Sel}
\DeclareMathOperator{\spec}{spec}
\DeclareMathOperator{\cores}{cores}
\DeclareMathOperator{\Split}{split}
\DeclareMathOperator{\res}{res}
\DeclareMathOperator{\bad}{bad}
\DeclareMathOperator{\verti}{vert}
\DeclareMathOperator{\R}{R}
\def\alp{{\alpha}}
\def\bet{{\beta}}
\def\eps{{\varepsilon}}
\def\lam{{\lambda}}
\def\vphi{{\varphi}}
\def\Om{{\Omega}}
\def\Del{{\Delta}}
\def\vphi{{\varphi}}
\def\div{\divv}
\def\lrar{\longrightarrow}
\def\hrar{\hookrightarrow}
\def\x{\stackrel}
\def\ovl{\overline}
\def\what{\widehat}
\def\bksl{\;\backslash\;}
\DeclareTextFontCommand{\textcyr}{\fontencoding{OT2}\fontfamily{wncyr}\fontseries{m}\fontshape{n}\selectfont}
\newcommand{\Sha}{\textcyr{Sh}}
\begin{document}

%%%%% To ease editing, add:
\baselineskip=17pt

\titlerunning{Integral points on conic log K3 surfaces}

\title{Integral points on conic log K3 surfaces}

\author{Yonatan Harpaz}
\date{}

\maketitle

\address{Institut des Hautes Études Scientifiques, 35 Route de Chartres, 91440 Bures-sur-Yvette, France}

\subjclass{11G99, 14G99}

\begin{abstract}
Adapting a powerful method of Swinnerton-Dyer, we give explicit sufficient conditions for the existence of integral points on certain schemes which are fibered into affine conics. This includes, in particular, cases where the scheme is geometrically a smooth log K3 surface. To the knowledge of the author, this is the first family of log K3 surfaces for which such conditions are established.  
\keywords{Integral points, log K3 surfaces, the fibration method, descent}
\end{abstract}

\tableofcontents

\section{Introduction}\label{s:intro}

The goal of this paper is to contribute to the Diophantine study of schemes by adapting a powerful method, pioneered by Swinnerton-Dyer, to the context of integral points. Let $S$ denote a finite set of places of $\QQ$ containing the real place. We will apply this method to give sufficient conditions for the existence of $S$-integral points on pencils of affine conics $\Y \lrar \PP^1_S$ which are determined inside the vector bundle $\OO(-n) \oplus \OO(-m)$ over $\PP^1$ by an equation of the form
\begin{equation}\label{e:main}
f(t,s)x^2 + g(t,s)y^2 = 1, 
\end{equation}
where $f(t,s),g(t,s) \in \ZZ_S[t,s]$ are separable homogeneous polynomials of degrees $2n$ and $2m$ respectively, and which split completely over $\ZZ_S$. To describe a sample result, consider the case $n=m=1$. The following statement is a special case of Theorem~\ref{t:main-3} below:

\begin{thm}\label{t:main-intro}
Let $S$ be a finite set of places of $\QQ$ containing $2,\infty$. For each $i=1,...,4$, let $c_i,d_i \in \ZZ_S$ be a pair of $S$-coprime $S$-integers such that $\Del_{i,j} := c_jd_i - c_id_j$ is non-zero for $i \neq j \in \{1,...,4\}$. Let $\Y \lrar \PP^1_{S}$ be the pencil of affine conics determined inside $\OO(-1) \oplus \OO(-1)$ by an equation of the form
\begin{equation}\label{e:in-theorem}
(c_1t + d_1s)(c_2t + d_2s)x^2 + (c_3t + d_3s)(c_4t + d_4s)y^2 = 1. 
\end{equation}
Assume that the classes $[-1],[\Del_{1,2}],[\Del_{1,3}],[\Del_{1,4}],[d_{2,3]}],[\Del_{2,4}],[\Del_{3,4}]$ form seven distinct linearly independent classes in $\mathbb{Q}^*/(\mathbb{Q}^*)^2$. Then $\Y$ has an $S$-integral point.
\end{thm}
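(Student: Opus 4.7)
The plan is to apply the fibration method: seek a coprime pair $(t_0, s_0) \in \ZZ_S^2$ such that the fiber $\Y_{(t_0,s_0)}$, which is the affine conic
\[
L_1(t_0,s_0) L_2(t_0,s_0)\, x^2 + L_3(t_0,s_0) L_4(t_0,s_0)\, y^2 = 1, \qquad L_i(t,s) := c_it + d_is,
\]
admits an $S$-integral point. Rational solvability of the fiber follows from the Hasse principle for conics once local solvability is arranged at every place; integrality at a prime $v \notin S$ then reduces to controlling the valuations of the $L_i(t_0,s_0)$. So the task is to select $(t_0, s_0)$ so that (i) local solvability holds everywhere, and (ii) the factorizations of the $L_i(t_0, s_0)$ outside $S$ are such that the resulting rational solution automatically lies in $\ZZ_S$.

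The arithmetic of the fibers is controlled by the four branch points $P_i = (d_i : -c_i) \in \PP^1$, whose pairwise positions are measured by the $\Del_{i,j}$. The correct extension for the 2-descent on the relevant torsors is the compositum $K = \QQ\bigl(\sqrt{-1},\, \sqrt{\Del_{i,j}}\ :\ 1 \le i < j \le 4\bigr)$, which by the linear independence hypothesis is a genuine $(\ZZ/2)^7$-extension of $\QQ$. The core of the proof is a Chebotarev / Dirichlet density argument in $K$: one produces $(t_0, s_0)$ so that, for each prime $v \notin S$ dividing some $L_i(t_0, s_0)$, the Frobenius class at $v$ lies in a prescribed element of $\Gal(K/\QQ)$. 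Choosing these classes suitably simultaneously kills the local Hilbert-symbol obstruction to solvability of the fiber conic and forces the resulting rational point to be $v$-adically integral. The hypotheses $2, \infty \in S$ remove the two most delicate places from this Chebotarev step.

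The main obstacle is the surjectivity of the $\FF_2$-linear map sending the choice of fiber parameters to the vector of local obstructions modulo squares. Its target is generated exactly by $[-1]$ and the six $[\Del_{i,j}]$; the linear independence of these seven classes in $\QQ^*/(\QQ^*)^2$ is precisely what guarantees that this map is surjective, so that a choice of $(t_0, s_0)$ killing every obstruction can actually be made. This is the point where the seven-class condition enters essentially, exactly as in the Swinnerton-Dyer descent for Châtelet surfaces, but with the additional twist coming from the integrality demand: one must refine the density argument so that no prime outside $S$ appears with odd valuation in any $L_i(t_0, s_0)$ unless it is simultaneously accounted for by a descent symbol. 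Once this combinatorial bookkeeping is verified, local solvability together with the Hasse principle for conics produces a rational point on the chosen fiber, and the construction ensures it is $S$-integral, completing the proof.
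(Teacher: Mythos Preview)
Your proposal gestures at the right ingredients (fibration, Chebotarev, $2$-descent) but misidentifies how they fit together, and the resulting outline would not produce a proof.

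The central gap is in the sentence ``Rational solvability of the fiber follows from the Hasse principle for conics once local solvability is arranged at every place; integrality at a prime $v \notin S$ then reduces to controlling the valuations of the $L_i(t_0,s_0)$.'' The Hasse principle for projective conics yields rational points, but the fiber $\Y_{(t_0,s_0)}$ is an \emph{affine} conic $Ax^2+By^2=1$, i.e.\ a torsor under the norm-$1$ torus $\T_{(t_0,s_0)}$, and the integral Hasse principle for such torsors can genuinely fail: the obstruction lives in $\Sha^1(\T_{(t_0,s_0)})[2]$. No amount of ``controlling valuations'' of the $L_i$ removes this; you need $\Sha^1[2]=0$, which by arithmetic duality is equivalent to the dual Selmer group $\Sel(\what{\T}_{(t_0,s_0)})$ being as small as possible. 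The paper devotes all of \S2 to setting up this $2$-descent and Corollary~2.3.2 to extracting the conclusion.

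Consequently the architecture is not a single Chebotarev step but an \emph{iterative} Swinnerton-Dyer process: one first finds an admissible fiber (using the Green--Tao--Ziegler theorem, which you do not mention and which is what makes each $L_i(t,s)$ essentially prime --- so the primes of odd valuation you hope to avoid are in fact deliberately present); then, if the dual Selmer group is too large, one adjoins a carefully chosen place $w$ via Chebotarev and passes to a new admissible fiber whose weak dual Selmer group is \emph{strictly smaller} (Proposition~3.7.4 and Proposition~3.8.1). Condition~(D), which is what the linear-independence hypothesis on $[-1],[\Del_{i,j}]$ actually buys (via Lemma~3.1.2), is used at each iteration to guarantee that a suitable $w$ exists. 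Your surjectivity statement about an $\FF_2$-linear ``obstruction map'' is not the mechanism; the seven-class hypothesis instead pins down the groups $\G_D,\G^D$ so that the Selmer-reduction step can always proceed.
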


\begin{rem}
Under the assumptions of Theorem~\ref{t:main-intro}, the surface~\ref{e:in-theorem} always has a real point and a $p$-adic point for every $p$, as well as an integral $p$-adic point for every $p \neq 2$ (see the proof of Theorem~\ref{t:main-3}). Since $2$ is assumed to be in $S$ we see that $\Y$ always has an $S$-integral adelic point.
\end{rem}

\begin{rem}
The smallest $S$-integral solution to~\ref{e:in-theorem} can have a substantially larger height than that of the coefficients $c_i,d_i$. For example, one of the smallest examples of~\ref{e:in-theorem} which satisfies the conditions of Theorem~\ref{t:main-intro} is the equation
\begin{equation}\label{e:example} 
(t + 4s)(2t + 5s)x^2 +  (3t + 2s)(5t + s)y^2 = 1
\end{equation}
whose smallest solution is $(x,y,t,s) = (35, 152, 49, -97)$. A naive heuristic for log K3 surfaces expects the height of the smallest solution to be exponential in the height of the coefficients. In particular, for coefficients even mildly larger than~\eqref{e:example}, finding a solution using a naive search is not feasible.
\end{rem}

Results concerning integral points and strong approximation on fibered varieties were first obtained by Colliot-Th\'el\`ene and Xu in~\cite{CTX11} for families of quadrics of dimension $\geq 2$. Their results were later generalized to include certain fibrations into homegenous spaces in~\cite{CTH16}, and in a different direction to remove certain non-compactness assumptions in~\cite{Xu15}. Other types of fibrations which were studied in this context include fibrations $X \lrar \A^n$ given by a norm equation of the form
\begin{equation}\label{e:norm}
\N_{L/k}(x) = f(t_1,...,t_n)
\end{equation}
where $L/k$ is a finite field extension, $x$ is a coordinate on the restriction of scalars affine space $\A^1_{L/k}$, and $f$ is a polynomial in several variables which, over the algebraic closure, factors into a product of linear polynomials. When $s=1$ and $L/k$ is a quadratic extension these are affine versions of the classical Ch\^atelet surfaces. For this case conditional results on the existence of integral points were obtained in~\cite{Gu13} under Schinzel's hypothesis. For other cases of~\eqref{e:norm} strong approximation was obtained uncondtionally in~\cite{DW13} using the method of descent. We note, however, that in all these cases the total space under consideration is log rationally connected (for example, over an algebraically closed field, the affine variety~\eqref{e:norm} contains a copy of affine space as an open subset, as soon as $f$ has a linear factor). In contrast, the surface $\Y$ of Theorem~\ref{t:main-intro} is an example of a \textbf{log K3 surface}, and, in particular, is not log rationally connected. As it is fibered into affine conics, it can be considered as the $S$-integral analogue of an elliptic K3 surface, and may hence be called a \textbf{conic log K3 surface}. As pointed out by the referee, integral points on conic log K3 surfaces were previously studied in~\cite{Be95}, which considered complements $X  = \PP^2 \setminus D$ of plane cubic curves $D \subseteq \PP^2$. While these surfaces are technically not log K3 surfaces in the somewhat restrictive sense of Definition~\ref{d:k3} below (since they are not simply connected), when $D$ is a smooth cubic such an $X$ is a log K3 surface according to other (common) definitions appearing in the literature, see Remark~\ref{r:k3}. In this case, one may fiber $X$ by an $\A^1$-family of affine conics obtained by taking the family of conics in $\PP^2$ which meet $D$ in two triple points. The assumptions of~\cite[Theorem 3.3]{Be95} guarantee in particular that this fibration has an $S$-integral section (and so $X$ has $S$-integral points), and the theorem asserts that under these assumptions $S$-integral points on $X$ are in fact Zariski dense.

\medskip

When studying $S$-integral points on an $\OO_S$-scheme $\X$ of finite type, one often begins by considering the set of $S$-integral adelic points 
$$ \X(\A_{k,S}) \x{\df}{=} \prod_{v \in S}X(k_v) \times \prod_{v \notin S} \X(\OO_v) $$ 
where $X = \X \otimes_{\OO_S} k$ is the generic fiber of $\X$. If $\X(\A_{k,S}) = \emptyset$ one may immediately deduce that $X$ has no $S$-integral points. In general, it can happen that $\X(\A_{k,S}) \neq \emptyset$ but $\X(\OO_S)$ is still empty. One way to account for this phenomenon is given by the integral version of the \textbf{Brauer-Manin obstruction}, introduced in~\cite{CTX09}. To this end one considers the set
$$ \X(\A_{k,S})^{\Br(X)} \x{\df}{=} \X(\A_{k,S}) \cap X(\A_k)^{\Br} $$
given by intersecting the set of $S$-integral adelic points with the Brauer set of $X$. When $\X(\A_{k,S})^{\Br(X)} = \emptyset$ one says that there is a Brauer-Manin obstruction to the existence of $S$-integral points. In their paper~\cite{CTX09}, Colliot-Th\'el\`ene and Xu showed that if $X$ is a homogeneous space under a simply-connected semi-simple algebraic group $G$ with connected geometric stablizers, and $G$ satisfies a certain non-compactness condition over $S$, then the Brauer-Manin obstruction is the only obstruction to the existence of $S$-integral points on $\X$. The results of~\cite{CTX09} were then extended to incorporate more general connected algebraic groups in~\cite{BD09}. Similar results hold when $X$ is a principal homogeneous space of an algebraic group of multiplicative type (see~\cite{WX12},\cite{WX13}). In~\cite{HV10} Harari and Voloch conjecture that the Brauer-Manin obstruction is the only one for $S$-integral points on open subsets of $\PP^1_{S}$, but show that this does not hold for open subsets of elliptic curves. Other counter-examples for which the Brauer-Manin obstruction is not sufficient to explain the lack of $S$-integral points are known. In some of these cases, one can still account for the lack of $S$-integral points by considering the integral Brauer set of a suitable \textbf{\'etale cover}, see~\cite[Example 5.10]{CTW12}. Other counter-examples involve an ``obstruction at infinity'', which can occur even when $X$ is geometrically very nice, for example, when $X$ is log rationally connected (see~\cite[Example 5.9]{CTW12}). For a construction of a log K3 (and in particular simply connected) surfaces $\X$ which is not obstructed at infinity, and for which $\X(\A_{k,S})^{\Br(X)} \neq \emptyset$ but $\X(\OO_S) = \emptyset$, see~\cite{Ha}.

The method of Swinnerton-Dyer which will be adapted in this paper can be considered as an extension of the \textbf{fibration method} -- a technique designed to prove the existence of rational points on a variety $X$ when $X$ is equipped with a dominant map $\pi:X \lrar B$ such that both $B$ and the generic fiber of $\pi$ are arithmetically well-behaved. For example, when $B = \PP^1_k$ and the fibers of $\pi$ above $\A^1_k$ are split (i.e., contain a geometrically irreducible open subset), one may use the fibration method in order to approximate an adelic point $(x_v) \in X(\A_{k})^{\Br}$ by an adelic point $(x_v') \in X(\A_k)$ lying over a rational point $t \in \PP^1_k(k)$ (see~\cite{Sko90}, which builds on techniques used in~\cite{CTSS87}). If more fibers are non-split, but are still geometrically split, more subtle variants of the fibration method can come into play. Such ideas were first used by Hasse in the reduction of his theorem on quadratic hypersurfaces to the $1$-dimensional case, and were extensively developed by Colliot-Th\'el\`ene, Sansuc, Serre, Skorobogatov, Swinnerton-Dyer and others (see~\cite{Se92},\cite{SD94},\cite{CT94},\cite{CTSSD98a}). The resulting methods often require the assumption of certain number theoretic conjectures, such as Schinzel's hypothesis in suitable cases or Conjecture 9.1 of~\cite{HW}. Unfortunately, these conjectures are only known to hold in very special cases (see~\cite[\S 9.2]{HW}), e.g., when $k=\QQ$ and all the non-split fibers lie over rational points.

If the fibers of $\pi$ satisfy the Hasse principle, and one manages to find an adelic point $(x_v') \in X(\A_k)$ which lies above a rational point $t \in \PP^1_k(k)$, then one may conclude that a rational point exists. Otherwise, one needs to consider the Brauer-Manin obstruction on the fibers. If the generic fiber is rationally connected (or satisfies a slightly more general condition), one may use techniques developed by Harari in~\cite{Har10} (see also~\cite{HW}) to make sure that the adelic point $(x_v') \in X_t(\A_k)$ constructed above is orthogonal to the Brauer group of the fiber $X_t$. If the generic fiber is not so well-behaved, the situation becomes considerably more complicated, and additional techniques are required. The most powerful example of such a subtle technique was pioneered by Swinnerton-Dyer, and was aimed at dealing with fibrations whose fibers are torsors under abelian varieties (under suitable conditions). We will refer to this method as the \textbf{descent-fibration} method.

The descent-fibration method first appeared in Swinnerton-Dyer's paper~\cite{SD95}, where it was applied to diagonal del-Pezzo surfaces of degree $4$. It was later generalized and established as a method to study rational points on pencils of genus $1$ curves in~\cite{CTSSD98b}. Additional applications include more general del Pezzo surfaces of degree $4$ (\cite{BSD01},\cite{CT01},\cite{Wit07}), diagonal del Pezzo surfaces of degree $3$ (\cite{SD01}), Kummer surfaces (\cite{SDS05}, \cite{HS}) and more general elliptic fibrations (\cite{CT01},\cite{Wit07}). In all these papers one is trying to establish the existence of rational points on a variety $X$. In order to apply the method one exploits a suitable geometric structure on $X$ in order to reduce the problem to the construction of rational points on a suitable fibered variety $Y \lrar \PP^1$ whose fibers are torsors under a family $A \lrar \PP^1$ of abelian varieties. The first step is to apply the fibration method above in order to find a $t \in \PP^1(k)$ such that the fiber $Y_t$ has points everywhere locally (this part typically uses the vanishing of the Brauer-Manin obstruction, and often requires Schinzel's hypothesis). The second step then consists of modifying $t$ until a suitable part of the Tate-Shafarevich group $\Sha^1(A_{t})$ vanishes, implying the existence of a $k$-rational point on $Y_{t}$. This part usually assumes, in additional to a possible Schinzel hypothesis, the finiteness of the Tate-Shafarevich group for all relevant abelian varieties, and crucially relies on the properties of the Cassels-Tate pairing.

The descent-fibration method is currently the only method powerful enough to prove (though often conditionally) the existence of rational points on families of varieties which includes non-rationally conncted varieties, such as K3 surfaces. It is hence the only source of evidence for the question of whether or not the Brauer-Manin obstruction is the only one for K3 surfaces.

The purpose of this paper is to study an adaptation of the descent-fibration method to the realm of \textbf{integral points}. To this end, we will replace torsors under abelian varieties with \textbf{torsors under algebraic tori}. In particular, we will focus our attention on torsors under \textbf{norm $1$ tori} associated to quadratic extensions. In \S\ref{s:descent} we will develop a $2$-descent formalism suitable for this context. Building on this formalism, we will adapt in \S\ref{s:main} the descent-fibration method in order to study integral points on pencils of conics of the form~\ref{e:main}. In particular, we will obtain explicit sufficient conditions for the existence of $S$-integral points on a certain natural family of conic log K3 surfaces, as described in Theorem~\ref{t:main-intro}. Apart from the application described in this paper, we expect this new integral descent-fibration method to be applicable in many other cases, opening the door for understanding integral points beyond the realm of log rationally connected schemes.

\subsection*{Acknowledgments}
The author wishes to thank the anonymous referees for their careful reading of the manuscript and their numerous useful suggestions. The author also wishes to thank Olivier Wittenberg for enlightening discussions surrounding the topic of this paper. During the writing of this paper the author was supported by the Fondation Sciences Math\'ematiques de Paris.

\section{$2$-Descent for quadratic norm 1 tori}\label{s:descent}

Let $k$ be a number field, $S_0$ a finite set of places of $k$ and $\OO_{S_0}$ the ring of $S_0$-integers of $k$. In this section we will develop a $2$-descent formalism whose goal is to yield sufficient conditions for the existence $S_0$-integral points on schemes of the form
\begin{equation}\label{e:conic-intro}
 ax^2 + by^2 = 1
\end{equation}
where $a,b$ are a mutually coprime $S_0$-integers. To this end we will study certain Tate-Shafarevich groups associated with~\ref{e:conic-intro}, and define suitable Selmer groups to compute them. The main result of this section is Corollary~\ref{c:final-step} below, which will be used in \S\ref{s:main} to find $S_0$-integral points on schemes which are fibered into affine conics of type~\ref{e:conic-intro}.

\subsection{Preliminaries}

Let $d \in \OO_{S_0}$ be a non-zero $S_0$-integer and let $K = k(\sqrt{d})$ be the associated quadratic extension. We will denote by $T_0$ the set of places of $K$ lying above $S_0$. Let us assume that $d$ satisfies the following condition:
\begin{assume}\label{a:base}
For every $v \notin S_0$ we have $\val_v(d) \leq 1$ and $\val_v(d) = 1$ if $v$ lies above $2$.
\end{assume}
The following useful lemma is by no means novel, but we could not find an explicit reference.
\begin{lem}\label{l:concrete}
If Assumption~\ref{a:base} holds then the ring $\OO_{T_0}$ is generated, as an $\OO_{S_0}$-module, by $1$ and $d$.
\end{lem}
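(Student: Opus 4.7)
The plan is to verify the claim (reading the asserted second generator as $\sqrt{d}$, the natural integral element of $K$ over $\OO_{S_0}$) by a local argument. The inclusion $\OO_{S_0}[\sqrt{d}] \subseteq \OO_{T_0}$ is immediate, since $\sqrt{d}$ is a root of the monic polynomial $X^2 - d \in \OO_{S_0}[X]$ and is hence integral at every place of $K$. Because $\OO_{S_0}$ is a Dedekind domain and both sides are $\OO_{S_0}$-submodules of $K$, it suffices to check that their localizations at every maximal ideal $v \notin S_0$ coincide. In other words, I would reduce to showing that $\OO_{k,v}[\sqrt{d}]$ equals the integral closure of $\OO_{k,v}$ in $K \otimes_k k_v$ for each such $v$.

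First I would dispose of the unramified case $\val_v(d) = 0$, which by Assumption~\ref{a:base} automatically forces $v \nmid 2$. In this situation the discriminant of the order $\OO_{k,v}[\sqrt{d}]$, namely $4d$, is a unit in $\OO_{k,v}$, so $\OO_{k,v}[\sqrt{d}]$ is étale over $\OO_{k,v}$ and therefore coincides with the maximal order, whether $K \otimes_k k_v$ is an unramified quadratic field extension of $k_v$ or splits as the product $k_v \times k_v$ (the latter occurring precisely when $d$ is a square in $k_v^\times$).

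The heart of the argument is the case $\val_v(d) = 1$, which by Assumption~\ref{a:base} covers all places above $2$ as well as the odd places dividing $d$. Writing $d = u \pi_v$ for some $u \in \OO_{k,v}^\times$ and a uniformizer $\pi_v$ of $\OO_{k,v}$, the element $\sqrt{d}$ satisfies $(\sqrt{d})^2 = u \pi_v$. Hence $K_v := k_v(\sqrt{d})$ is a ramified quadratic extension of $k_v$ and $\sqrt{d}$ is itself a uniformizer of the local ring of integers $\OO_{K_v}$. This forces $\OO_{K_v} = \OO_{k,v}[\sqrt{d}]$, giving the required local equality.

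The main difficulty lies at the dyadic places, where the discriminant trick is unavailable and one cannot in general expect $\OO_{k,v}[\sqrt{d}]$ to be the full local ring of integers; the assumption $\val_v(d) = 1$ at every $v \mid 2$ is exactly what circumvents this obstacle by making $\sqrt{d}$ a uniformizer directly. Gluing the local equalities together via the local-to-global principle for modules over Dedekind domains then yields $\OO_{T_0} = \OO_{S_0} + \OO_{S_0}\sqrt{d}$, as claimed.
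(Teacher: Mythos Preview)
Your proof is correct, and you rightly read the second generator as $\sqrt{d}$ rather than $d$. Your approach, however, differs from the paper's. The paper argues elementwise: given $\beta = t + s\sqrt{d} \in \OO_{T_0}$ with $t,s \in k$, it uses the trace $2t = \Tr_{K/k}(\beta) \in \OO_{S_0}$ to get $\val_v(t) \geq 0$ at odd $v$, and then uses the norm (implicitly, via $\val_v(d) \in \{0,1\}$) to conclude $\val_v(s) \geq 0$; at dyadic $v$ it exploits the parity mismatch between $\val_w(t)$ (even) and $\val_w(s\sqrt{d})$ (odd) for the unique place $w \mid v$ to read off both valuations from $\val_w(\beta) \geq 0$. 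Your route is more structural: you localize at each $v$ and invoke either the discriminant criterion (when $4d \in \OO_{k,v}^\times$) or the fact that a uniformizer generates the maximal order in a totally ramified extension (when $\val_v(d)=1$). Your argument makes the role of Assumption~\ref{a:base} at the dyadic primes particularly transparent, and generalizes cleanly; the paper's argument is more self-contained and avoids any appeal to facts about \'etale algebras or local maximal orders. One minor point: when you pass from the localization $(\OO_{S_0})_v$ to the completion $\OO_{k_v}$ (as your notation $K \otimes_k k_v$ suggests), you are implicitly using faithful flatness of completion over a DVR, which is fine but worth saying.
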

\begin{proof}
Let $\bet \in \OO_{T_0}$ be an element and write $\bet = t + s\sqrt{d}$ with $t,s \in k$. We wish to show $\val_v(t) \geq 0$ and $\val_v(s) \geq 0$ for every $v \notin S_0$. Since $\bet \in \OO_{T_0}$ we have that $2t = \Tr_{K/k}(\bet) \in \OO_{S_0}$. Hence if $v \notin S_0$ is an odd place then $\val_v(t) \geq 0$ and since $\val_v(d) \in \{0,1\}$ it follows that $\val_v(s) \geq 0$ as well. Now assume that $v$ lies above $2$. Since we assumed $\val_v(d) = 1$ it follows that $v$ is ramified in $K$ and there exists a unique place $w$ above $v$. Furthermore, $\val_w(\sqrt{d}) = 1$. It then follows that $\val_w(t) = 2\val_v(t)$ is even and $\val_w\left(s\sqrt{d}\right) = 2\val_v(s) + 1$ is odd. This means that
$$ 0 \leq \val_w(\bet) = \val_w\left(t + s\sqrt{d}\right) = \min\left(\val_w(t),\val_w\left(s\sqrt{d}\right)\right) $$
and hence $2\val_v(t) = \val_w(t) \geq 0$ and $2\val_v(s) = \val_w(s) \geq -\val_w(\sqrt{d}) = -1$. This implies that $\val_v(t) \geq 0$ and $\val_v(s) \geq 0$, as desired.
\end{proof}

Let $\T_0$ denote the group scheme over $\OO_{S_0}$ given by the equation
$$ x^2 - dy^2 = 1 $$
We may identify the $S_0$-integral points of $\T_0$ with the set of units in $\OO_{T_0}$ whose norm is $1$ (in which case the group operation is given by multiplication in $\OO_{T_0}$). We note that technically the group scheme $\T_0$ is not an algebraic torus over $\OO_{S_0}$, since it does not split over an \'etale extension of $\OO_{S_0}$. For every divisor $a | d$ we may consider the affine $\OO_{S_0}$-scheme $\Z^{a}_0$ given by the equation
\begin{equation}\label{e:torsor}
ax^2 + by^2 = 1 
\end{equation}
where $b = -\frac{d}{a}$. Let $I_a \subseteq \OO_{T_0}$ be the $\OO_{T_0}$-ideal generated by $a$ and $\sqrt{d}$. Lemma~\ref{l:concrete} implies, in particular, that $I_a$ is generated by $a$ and $\sqrt{d}$ as an \textbf{$\OO_{S_0}$-module}. It follows that the association $(x,y) \mapsto ax + \sqrt{d}y$ identifies the set of $S_0$-integral points of $\Z^a_0$ with the set of elements in $I_a$ whose norm is $a$. We note that the norm of $I_a$ is the ideal generated by $a$, and hence we may consider the scheme $\Z^{a}_0$ above as parameterizing \textbf{generators} of $I_a$ whose norm is exactly $a$. We have a natural action of the algebraic group $\T_0$ on the scheme $\Z^{a}_0$ corresponding to multiplying a generator by a unit. Assumption~\ref{a:base} implies that $a$ and $b$ are coprime in $\OO_{S_0}$ (i.e., the ideal $(a,b) \subseteq \OO_{S_0}$ generated by $a,b$ is equal to $\OO_{S_0}$). This, in turn, implies that the action of $\T_0$ on $\Z^a_0$ exhibits $\Z^a_0$ as a \textbf{torsor} under $\T_0$, locally trivial in the \'etale topology, and hence classified by an element in the \'etale cohomology group $\alp_a \in H^1(\OO_{S_0},\T_0)$. The solubility of $\Z^a_0$ is equivalent to the condition $\alp_a = 0$. The study of $S_0$-integral points on $\Z^a_0$ hence naturally leads to the study of \'etale cohomology groups as above, just as the study of rational points on curves of genus $1$ naturally leads to Galois cohomology groups of their Jacobians. Our main goal in this paper is to construct an adaptation of Swinnerton-Dyer's method where abelian varieties and their torsors are replaced by group schemes of the form $\OO_{T_0}$ and their torsors $\Z^a_0$, respectively.

We now observe that the torsor $\Z^a_0$ is not an arbitrary torsor of $\T_0$. Since $1 \in (a,b)$ it follows that $a \in I_a^2$ and hence $I_a^2 = (a)$ by norm considerations. In particular, if $\bet = ax + \sqrt{d}y \in I_a$ has norm $a$ then $\frac{\bet^2}{a} \in \OO_d$ has norm $1$. This operation can be realized as a map of $\OO_{S_0}$-schemes
$$ q:\Z^a_0 \lrar \T_0 .$$
The action of $\T_0$ on $\Z^a_0$ is compatible with the action of $\T_0$ on itself via the multiplication-by-$2$ map $\T_0 \x{2}{\lrar} \T_0$. We will say that $q$ is a map of $\T_0$-torsors covering the map $\T_0 \x{2}{\lrar} \T_0$. It then follows that the element $\alp_a \in H^1(\OO_{S_0},\T_0)$ is a \textbf{$2$-torsion element} and we are naturally lead to study the $2$-torsion group $H^1(\OO_{S_0},\T_0)[2]$.

Finally, an obvious necessary condition for the existence of $S_0$-integral points on $\Z^a_0$ is that $\Z^a_0$ carries an $S_0$-integral \textbf{adelic point}. This condition restricts the possible elements $\alp_a$ to a suitable subgroup of $H^1(\OO_{S_0},\T_0)$, which we may call $\Sha^1(\T_0, S_0)$. We are therefore interested in studying the $2$-torsion subgroup $\Sha^1(\T_0, S_0)[2]$. It turns out that these groups are more well-behaved when the group scheme $\T_0$ is a \textbf{algebraic torus}, i.e, splits in an \'etale extension of the base ring. To this end we will temporary extend our scalars from $\OO_{S_0}$ to $\OO_S$ for a suitable finite subset $S \supset S_0$. This will be done in the next subsection. Our final goal is to prove Corollary~\ref{c:final-step}, in which we will be able to obtain information on $S_0$-integral points of the original scheme $\Z^a_0$, prior to the extension of scalars.

\subsection{The Selmer and dual Selmer groups}\label{ss:selmer}

Let $S$ be the union of $S_0$ with all the places which ramify in $K$ and all the places above $2$. We will denote by $T \subseteq \Om_{K}$ the set of places of $K$ which lie above $S$. Let $\T$ be the base change of $\T_0$ from $\OO_{S_0}$ to $\OO_{S}$. We note that $\T$ becomes isomorphic to $\GG_m$ after base changing from $\OO_{S}$ to $\OO_{T}$, and $\OO_{T}/\OO_{S}$ is an \'etale extension of rings. This means that $\T$ is an \textbf{algebraic torus} over $\OO_{S}$. We will denote by $\what{\T}$ the character group of $\T$, considered as an \'etale sheaf over $\spec(\OO_{S})$. We will use the notation $H^i(\OO_{S},\F)$ to denote \'etale cohomology of $\spec(\OO_S)$ with coefficients in the sheaf $\F$.
\begin{define}\
\begin{enumerate}[(1)]
\item
We will denote by $\Sha^1(\T, S) \subseteq H^1(\OO_{S},\T)$ the kernel of the map
$$ H^1(\OO_{S},\T) \lrar \prod_{v \in S} H^1(k_v, \T \otimes_{\OO_{S}} k_v) .$$
\item
We will denote by $\Sha^2(\what{\T}, S) \subseteq H^2(\OO_{S},\what{\T})$ the kernel of the map
$$ H^2(\OO_S,\what{\T}) \lrar \prod_{v \in S} H^2(k_v, \what{\T} \otimes_{\OO_S} k_v) .$$
\end{enumerate}
\end{define}

Since $\T$ is an algebraic torus we may apply~\cite[Theorem 4.6(a), 4.7]{Mil06} and deduce that the groups $\Sha^1(\T, S)$ and $\Sha^2(\what{\T}, S)$ are finite and that the cup product in \'etale cohomology with compact support induces a perfect pairing

\begin{equation}\label{e:pairing-sha}
\Sha^1(\T, S) \times \Sha^2(\what{\T}, S) \lrar \QQ/\ZZ.
\end{equation}

Since $2$ is invertible in $\OO_S$ the multiplication by $2$ map $\T \x{2}{\lrar} \T$ is surjective when considered as a map of \'etale sheaves on $\spec(\OO_S)$. We hence obtain a short exact sequence of \'etale sheaves
$$ 0 \lrar \ZZ/2 \lrar \T \x{2}{\lrar} \T \lrar 0 .$$
We define the \textbf{Selmer group} $\Sel(\T, S)$ to be the subgroup $\Sel(\T, S) \subseteq H^1(\OO_S,\ZZ/2)$ consisting of all elements whose image in $H^1(\OO_S,\T)$ belongs to $\Sha^1(\T, S)$. We consequently obtain a short exact sequence
$$ 0 \lrar \T_S(\OO_S)/2 \lrar \Sel(\T, S) \lrar \Sha^1(\T, S)[2] \lrar 0 $$
where $\T_S(\OO_S)/2$ denotes the cokernel of the map $\T(\OO_S) \x{2}{\lrar} \T(\OO_S)$. Similarly, we have a short exact sequence of \'etale sheaves
$$ 0 \lrar \what{\T} \x{2}{\lrar} \what{\T} \lrar \ZZ/2 \lrar 0 $$
and we define the \textbf{dual Selmer group} $\Sel(\what{\T}, S) \subseteq H^1(\OO_S,\ZZ/2)$ to be the subgroup consisting of all elements whose image in $H^2(\OO_S,\what{\T})$ belongs to $\Sha^2(\what{\T}, S)$. The dual Selmer group then sits in a short exact sequence of the form
$$ 0 \lrar H^1(\OO_S,\what{\T})/2 \lrar \Sel(\what{\T}, S) \lrar \Sha^2(\what{\T}, S)[2] \lrar 0 $$

Let us now describe the Selmer group more explicitly. The map $H^1(\OO_S,\ZZ/2) \lrar H^1(\OO_S, \T)$ can be described as follows. Since $S$ contains all the places above $2$ the Kummer sequence associated to the sheaf $\GG_m$ yields a short exact sequence
$$ 0 \lrar \OO_S^*/(\OO_S^*)^2 \lrar H^1(\OO_S,\ZZ/2) \lrar \Pic(\OO_S)[2] \lrar 0 $$
More explicitly, an element of $H^1(\OO_S,\ZZ/2)$ corresponds to a quadratic extension $K = k(\sqrt{a})$ which is unramified outside $S$. Consequently, the element $a \in k^*$, which is well-defined up to squares, must have an even valuation at every place $v \notin S$. The map $H^1(\OO_S,\ZZ/2) \lrar \Pic(\OO_S)[2]$ is then given by sending the class of $k(\sqrt{a})$ to the class of $\frac{\div(a)}{2}$, where $\div(a)$ is the divisor of $a$ when considered as a function on $\spec(\OO_S)$. Let $I_a \subseteq \OO_T$ be the ideal corresponding to the pullback of $\frac{\div(a)}{2}$ from $\OO_S$ to $\OO_T$. Then $I_a$ is an ideal of norm $(a)$ and we can form the $\OO_S$-scheme $\Z^a$ parameterizing elements of $I_a$ of norm $a$ (such a scheme admits explicit affine equations locally on $\spec(\OO_{S})$ by choosing local generators for the ideal $I_a$). The scheme $\Z^a$ is a torsor under $\T$, and the classifying class of $\Z^a$ is the image of $a$ in $H^1(\OO_S, \T)$. We note that such a scheme automatically has $\OO_v$-points for every $v \notin S$. By definition the class in $H^1(\OO_S,\ZZ/2)$ represented by $a$ belongs to $\Sel(\T, S)$ if and only if the torsor $\Z^a$ has local points over $S$, i.e., if and only if it has an $S$-integral \textbf{adelic point}.

We note that if $a$ is such that $\div(a) = 0$ on $\spec(\OO_S)$ (i.e., $a$ is an $S$-unit), then $\Z^a$ is the scheme parameterizing $T$-units in $K$ whose norm is $a$, and can be written as
\begin{equation}\label{e:torsor-2}
x^2 - dy^2 = a.
\end{equation}
Of course, this is always the case if one inverts all primes outside $S$. In particular, for every place $v \in S$, the base change of $\Z^a$ to $k_v$ becomes isomorphic to~\ref{e:torsor-2} over $k_v$, and the scheme $\Z^a$ has a $k_v$-point if and only if the Hilbert pairing $\left<a,d\right>_v \in H^2(k_v,\ZZ/2)$ vanishes. Finally, we note that if $a$ is a divisor of $d$ (so that in particular $a$ is an $S$-unit), then the scheme $\Z^a$ coincides with the base change of the our scheme of interest $\Z^a_0$ (see~\ref{e:torsor}) from $\OO_{S_0}$ to $\OO_S$.
 
On the dual side, we may consider the short exact sequence
$$ 0 \lrar \what{\T} \lrar \what{\T} \otimes \QQ \lrar \what{\T} \otimes (\QQ/\ZZ) \lrar 0 .$$
Since $\what{\T} \otimes \QQ$ is a uniquely divisible sheaf we get an identification
$$ H^2(\OO_S,\what{\T}) \cong H^1(\OO_S, \what{\T} \otimes (\QQ/\ZZ)) .$$
To compute the latter, let $\V = \R_{\OO_T/\OO_S} \A^1$ be the Weil restriction of scalars of $\A^1$ and let $\U \subseteq \V$ be the algebraic torus
$$ \U = \{x + y\sqrt{d} \in \V | x^2 - dy^2 \neq 0\} \cong R_{\OO_T/\OO_S}(\GG_m) .$$
Let $\vphi:\U \lrar \T$ denote the homomoprhism $\vphi(x+y\sqrt{d}) =\frac{x + y\sqrt{d}}{x - y \sqrt{d}}$. We then obtain a short exact sequence of algebraic tori over $\OO_S$:
$$ 1 \lrar \GG_m \lrar \U \lrar \T \lrar 1 $$
and consequently a short exact sequence of \'etale sheaves
$$ 0 \lrar \what{\T} \otimes \QQ/\ZZ \lrar \what{\U} \otimes \QQ/\ZZ \lrar \QQ/\ZZ \lrar 0 .$$
Now the character sheaf $\what{\U}$ is (non-naturally) isomorphic to the cocharacter sheaf of $\U$, which, in turn, is naturally isomorphic to $f_*\ZZ$ where $f: \spec(\OO_T) \lrar \spec(\OO_S)$ is the obvious map. By the projection formula we may then identify $\what{\U} \otimes \QQ/\ZZ$ with $f_*\QQ/\ZZ$ and since $f$ is a finite map we get that $H^1(\OO_S, \what{\U} \otimes \QQ/\ZZ) \cong H^1(\OO_T,\QQ/\ZZ)$. Finally, since the map $H^0(\OO_S,\what{\U} \otimes \QQ/\ZZ) \lrar H^0(\OO_S,\QQ/\ZZ)$ is surjective we obtain an isomorphism
$$ H^2(\OO_S,\what{\T}) \cong H^1(\OO_S,\what{\T} \otimes \QQ/\ZZ) \cong \Ker[H^1(\OO_T,\QQ/\ZZ) \x{\cores}{\lrar} H^1(\OO_S,\QQ/\ZZ)]$$
and hence an isomorphism
$$ H^2(\OO_S,\what{\T})[2] \cong \Ker[H^1(\OO_T,\ZZ/2) \x{\cores}{\lrar} H^1(\OO_S,\ZZ/2)], $$
where in both cases $\cores$ denotes the relevant corestriction map. We may hence identify $\Sha^2(\what{\T}, S)[2] \subseteq H^2(\OO_S,\what{\T})[2]$ with the subgroup of $\Ker[H^1(\OO_T,\ZZ/2) \x{\cores}{\lrar} H^1(\OO_S,\ZZ/2)]$ consisting of those extensions which furthermore split over $T$. We note that the composition $H^1(\OO_S,\ZZ/2) \lrar H^2(\OO_S,\what{\T})[2] \lrar H^1(\OO_T,\ZZ/2)$ is just the natural restriction map which sends a class $[a] \in H^1(\OO_S,\ZZ/2)$ to the class of the quadratic extension $K(\sqrt{a})$. 
We hence obtain the following explicit description of $\Sel(\what{\T}, S)$:
\begin{cor}\label{c:dual}
Let $[a] \in H^1(\OO_S,\ZZ/2)$ be a class represented by an element $a \in \OO_S$ (such that $\val_v(a)$ is even for every $v \notin S$). Then $[a] \in \Sel(\what{\T}, S)$ if and only if every place in $T$ splits in $K(\sqrt{a})$.
\end{cor}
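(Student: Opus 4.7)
The plan is to unwind the definition of $\Sel(\what{\T}, S)$ using the concrete description of $H^2(\OO_S, \what{\T})[2]$ established in the paragraphs immediately preceding the statement. By construction, $[a]$ lies in $\Sel(\what{\T}, S)$ if and only if the image of $[a]$ under the connecting map $\delta : H^1(\OO_S, \ZZ/2) \lrar H^2(\OO_S, \what{\T})[2]$ lies in the subgroup $\Sha^2(\what{\T}, S)[2]$.

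The preceding paragraph identifies $H^2(\OO_S, \what{\T})[2]$ with $\Ker[\cores : H^1(\OO_T, \ZZ/2) \lrar H^1(\OO_S, \ZZ/2)]$ and $\Sha^2(\what{\T}, S)[2]$ with the further subgroup of those classes which in addition split at every place $w \in T$. It also records that the induced composition $H^1(\OO_S, \ZZ/2) \lrar H^1(\OO_T, \ZZ/2)$ is the natural restriction map, sending $[a]$ to the class of the quadratic extension $K(\sqrt{a})/K$. Assembling these two facts, I would conclude that $[a] \in \Sel(\what{\T}, S)$ iff the class of $K(\sqrt{a})/K$ in $H^1(\OO_T, \ZZ/2)$ splits at every $w \in T$; the $\Ker[\cores]$ condition is automatic since $\delta([a])$ already lands inside $H^2(\OO_S, \what{\T})[2] \cong \Ker[\cores]$.

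To finish, I would translate the cohomological notion of splitting into the arithmetic one: for a quadratic extension $K(\sqrt{a})/K$ and a place $w$ of $K$, the class of $K(\sqrt{a})$ in $H^1(K_w, \ZZ/2)$ is trivial if and only if $a \in (K_w^*)^2$, which is precisely the condition that $w$ splits completely in $K(\sqrt{a})$. This gives the stated criterion. I do not anticipate any real obstacle: all the substantive content was carried out in the identification of $\Sha^2(\what{\T}, S)[2]$ preceding the statement, and the corollary is essentially a matter of tracing through the definitions and recording the equivalence between local triviality of the Kummer class and complete splitting.
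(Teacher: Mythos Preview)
Your proposal is correct and follows exactly the approach the paper takes: the corollary is stated immediately after the identification of $\Sha^2(\what{\T},S)[2]$ with classes in $\Ker[\cores]$ that split over $T$, and the paper gives no separate proof, treating it as a direct unwinding of the definitions via the restriction map $[a] \mapsto [K(\sqrt{a})/K]$. Your write-up simply makes explicit the steps the paper leaves implicit.
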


\begin{cor}
The kernel of the map $\Sel(\what{\T}, S) \lrar \Sha^2(\what{\T}, S)[2]$ has rank $1$ and is generated by the class $[d] \in \Sel(\what{\T}, S)$.
\end{cor}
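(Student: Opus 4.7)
The plan is to use the short exact sequence
$$ 0 \lrar H^1(\OO_S,\what{\T})/2 \lrar \Sel(\what{\T}, S) \lrar \Sha^2(\what{\T}, S)[2] \lrar 0 $$
stated just before the corollary, from which the kernel of the map to $\Sha^2(\what{\T},S)[2]$ is precisely $H^1(\OO_S,\what{\T})/2$ (embedded in $H^1(\OO_S,\ZZ/2)$ via reduction mod $2$). So the task splits into two steps: (i) compute $H^1(\OO_S,\what{\T})$ and check it is $\ZZ/2$, and (ii) identify the mod-$2$ image of its generator with $[d]$.

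For (i), I would use the exact sequence $0 \to \what{\T} \to \what{\U} \to \ZZ \to 0$ already produced in the text. Since $f:\spec(\OO_T) \to \spec(\OO_S)$ is finite étale, $H^i(\OO_S,\what{\U}) = H^i(\OO_T,\ZZ)$. In particular $H^1(\OO_S,\what{\U}) = H^1(\OO_T,\ZZ) = 0$ (a profinite group has no nontrivial $\ZZ$-valued characters), while the induced map $H^0(\OO_S,\what{\U}) = \ZZ \to H^0(\OO_S,\ZZ) = \ZZ$ is multiplication by $2$, because on Galois invariants $\ZZ[\Gal(K/k)]^{\Gal(K/k)} \to \ZZ$ sends $(a,a)$ to $2a$. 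The long exact sequence in cohomology then collapses to $H^1(\OO_S,\what{\T}) \cong \ZZ/2$, which is already $2$-torsion, so $H^1(\OO_S,\what{\T})/2 \cong \ZZ/2$.

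For (ii), I would exhibit an explicit cocycle. As a $G_S$-module, $\what{\T} \cong \ZZ$ with $G_S$ acting through the quadratic character $\chi_d:G_S \twoheadrightarrow \Gal(K/k) = \{\pm 1\}$ attached to $K = k(\sqrt{d})$. The formula $c(g) = (1-\chi_d(g))/2$ defines an integer-valued function satisfying the twisted cocycle identity $c(gh) = c(g) + \chi_d(g)c(h)$, and it is not a coboundary (a would-be primitive would need to be $-1/2$). Its reduction modulo $2$ is the ordinary character $\chi_d:G_S \to \ZZ/2$, which corresponds to the class $[d] \in H^1(\OO_S,\ZZ/2)$. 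Hence the image of $H^1(\OO_S,\what{\T})/2$ inside $\Sel(\what{\T},S)$ is generated by $[d]$. (Well-posedness, i.e., that $[d]$ really lies in $\Sel(\what{\T},S)$, follows from Corollary \ref{c:dual} since $K(\sqrt{d}) = K$, so every place of $T$ splits in $K(\sqrt{d})$ trivially.) The main subtlety is step (ii), but the smallness of $H^1(\OO_S,\what{\T}) = \ZZ/2$ means that once one writes down any nontrivial cocycle, its mod-$2$ reduction is forced.
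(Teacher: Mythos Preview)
Your argument is correct, but it takes a longer route than the paper intends. The corollary is stated without proof because it is meant to be read off immediately from the sentence just before Corollary~\ref{c:dual}: the composition $H^1(\OO_S,\ZZ/2) \to H^2(\OO_S,\what{\T})[2] \hookrightarrow H^1(\OO_T,\ZZ/2)$ is the restriction map. Hence the kernel of $\Sel(\what{\T},S) \to \Sha^2(\what{\T},S)[2]$ coincides with the kernel of restriction $H^1(\OO_S,\ZZ/2) \to H^1(\OO_T,\ZZ/2)$, which is $\{1,[d]\}$ since a class $[a]$ dies under restriction exactly when $K(\sqrt{a})=K$. That $[d]$ actually lies in $\Sel(\what{\T},S)$ is immediate from Corollary~\ref{c:dual}, as you note.

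Your approach instead computes $H^1(\OO_S,\what{\T})\cong\ZZ/2$ from the long exact sequence attached to $0\to\what{\T}\to\what{\U}\to\ZZ\to 0$, and then pins down the generator via an explicit twisted cocycle. This is perfectly valid and has the merit of being self-contained (it does not use the identification of $H^2(\OO_S,\what{\T})[2]$ with a subgroup of $H^1(\OO_T,\ZZ/2)$), but it duplicates work: the paper has already done the harder computation of $H^2(\OO_S,\what{\T})$ in terms of $H^1(\OO_T,\QQ/\ZZ)$, and the restriction description of the connecting map falls out of that for free. In short, your proof is sound but overlooks that the heavy lifting was done two paragraphs earlier.
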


Our proposed method of $2$-descent is in essence just as way to calculate the $2$-ranks of $\Sel(\T, S)$ and $\Sel(\what{\T}, S)$. When $k=\QQ$ this method can be considered as a repackaging of Gauss' classical genus theory for computing the $2$-torsion of the class groups of quadratic fields.

The notation introduced in the next few paragraphs follows the analogous notation of~\cite{CTSSD98b} and ~\cite{CT01}. Let $S'$ be any finite set of places containing $S$ and such that $\Pic(\OO_{S'}) = 0$. For each $v \in S'$, let $V_v$ and $V^v$ denote two copies of $H^1(k_v,\ZZ/2) \cong k_v^*/(k_v^*)^2$, considered as $\FF_2$-vector spaces. We will also denote $V_{S'} = \oplus_{v \in S'} V_v$ and $V^{S'} = \oplus_{v \in S'} V^v$. By taking the sum of the Hilbert symbol pairings
\begin{equation}\label{e:pairing}
\left<,\right>_v: V_v \times V^v \lrar \ZZ/2
\end{equation}
we obtain a non-degenerate pairing
\begin{equation}\label{e:pairing-0} 
\left<,\right>_S: V_{S'} \times V^{S'} \lrar \ZZ/2
\end{equation}
Let $I_{S'}$ and $I^{S'}$ be two copies of the group $\OO_{S'}^*/(\OO_{S'}^*)^2$. As $S'$ contains all the real places and all the places above $2$ and since $\Pic(\OO_{S'}) = 0$ we have $I_{S'}, I^{S'} \cong H^1(\OO_{S'},\ZZ/2)$ and the localization maps
$$ I_{S'} \hrar V_{S'} $$
$$ I^{S'} \hrar V^{S'} $$
are injective (see~\cite[Proposition 1.1.1]{CTSSD98b}). Furthermore, Tate-Poitou's sequence for the Galois module $\ZZ/2$ implies that $I_{S'}$ is the orthogonal complement of $I^{S'}$ with respect to~\ref{e:pairing-0}, and vice versa.

For each $v \in S'$ we define a subspace $W^v \subseteq V^v$ as follows. If $v \in S$ then we let $W^v$ be the subspace generated by the class $[d]$. If $v \in S' \bksl S$ then we let $W^v$ be the image of $\OO_v^*/(\OO_v^*)^2$ inside $(k_v^*)/(k_v^*)^2$. In both cases we let $W_v \subseteq V_v$ be the orthogonal complement of $W^v$ with respect to~\ref{e:pairing}. We will denote by $W_{S'} = \oplus_{v \in S'} W_v$ and $W^{S'} = \oplus_{v \in S'} W^v$. By construction we see that $W_{S'}$ is the orthogonal complement of $W^{S'}$ with respect to~\ref{e:pairing-0} and vice versa.

Now consider the induced pairings
\begin{equation}\label{e:pairing-1} 
I_{S'} \times W^{S'} \lrar \ZZ/2
\end{equation}
and
\begin{equation}\label{e:pairing-2} 
W_{S'} \times I^{S'} \lrar \ZZ/2
\end{equation}
The $2$-descent method for calculating the ranks of $\Sel(\T,S)$ and $\Sel(\what{\T},S)$ can be neatly summarized in the following proposition (compare~\cite[Lemma 1.4.1]{CT01}):
\begin{prop}\label{p:2-descent}
The Selmer group $\Sel(\T, S)$ can be identified with each of the following groups:
\begin{enumerate}[(1)]
\item
The intersection $I_{S'} \cap W_{S'}$.
\item
The left kernel of~\ref{e:pairing-1}.
\item
The left kernel of~\ref{e:pairing-2}.
\end{enumerate}
Similarly, the dual Selmer group $\Sel(\what{\T}, S)$ can be identified with each of the following groups:
\begin{enumerate}[(1)]
\item
The intersection $I^{S'} \cap W^{S'}$.
\item
The right kernel of~\ref{e:pairing-1}.
\item
The right kernel of~\ref{e:pairing-2}.
\end{enumerate}
\end{prop}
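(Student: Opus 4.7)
The plan is to prove item~(1) for both Selmer groups directly from the definitions and then deduce items~(2) and~(3) by formal manipulations, using only the orthogonality relations $W_{S'} = (W^{S'})^\perp$ and $I_{S'} = (I^{S'})^\perp$ recalled in the excerpt.

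For $\Sel(\T, S)$, a class $[a] \in I_{S'}$ lies in $\Sel(\T, S)$ if and only if (a) $[a]$ comes from $H^1(\OO_S, \ZZ/2) \subseteq H^1(\OO_{S'}, \ZZ/2)$, i.e.\ is unramified outside $S$, and (b) the torsor $\Z^a$ acquires a $k_v$-point for every $v \in S$. Condition~(a) translates place-by-place into $a_v \in W^v$ for each $v \in S' \setminus S$, where $W^v$ is the unramified subgroup; at such an odd place $W^v$ is its own Hilbert-orthogonal complement, so $W^v = W_v$. Condition~(b) was observed above to be equivalent to $\langle a, d\rangle_v = 0$ for every $v \in S$, which by the definition of $W_v$ (the orthogonal complement of $\langle [d]\rangle = W^v$) means $a_v \in W_v$. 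Assembling these local conditions over all $v \in S'$ yields $\Sel(\T, S) = I_{S'} \cap W_{S'}$, proving item~(1). Items~(2) and~(3) then follow from orthogonality: the left kernel of \ref{e:pairing-1} is $I_{S'} \cap (W^{S'})^\perp = I_{S'} \cap W_{S'}$, and the left kernel of \ref{e:pairing-2} is $W_{S'} \cap (I^{S'})^\perp = W_{S'} \cap I_{S'}$ by Tate-Poitou duality.

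For the dual Selmer group I would start from Corollary~\ref{c:dual}: a class $[a] \in H^1(\OO_S,\ZZ/2)$ lies in $\Sel(\what{\T}, S)$ iff every place of $T$ splits in $K(\sqrt{a})$. The unramified-outside-$S$ condition again gives $a_v \in W^v$ for $v \in S' \setminus S$. For $v \in S$, a short case analysis on whether $v$ splits, is inert, or ramifies in $K = k(\sqrt{d})$ shows that all places of $T$ above $v$ split in $K(\sqrt{a})$ exactly when $k_v(\sqrt{a}) \subseteq k_v(\sqrt{d})$, equivalently $a_v \in \langle [d]\rangle = W^v$. Thus $\Sel(\what{\T}, S) = I^{S'} \cap W^{S'}$, and the right kernels of \ref{e:pairing-1} and \ref{e:pairing-2} give the same intersection by the same orthogonal-complement calculation, using $(I_{S'})^\perp = I^{S'}$ and $(W_{S'})^\perp = W^{S'}$.

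The main obstacle lies in the three local translations: identifying unramified classes with $W^v$ at odd places outside $S$, translating local solubility of $\Z^a$ at $v \in S$ into $\langle a, d\rangle_v = 0$, and translating the splitting of $T$-places of $K$ in $K(\sqrt{a})$ into $a_v \in \langle [d]\rangle$. All three are standard, but each requires either a brief case analysis on the decomposition behavior of $v$ in $K$ or a classical fact about Hilbert symbols and norm forms. Once these inputs are in place, the proposition reduces to routine linear algebra of orthogonal complements inside the finite-dimensional $\FF_2$-vector space $V_{S'}$.
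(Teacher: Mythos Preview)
Your proposal is correct and follows essentially the same route as the paper's proof: both reduce to establishing item~(1) for each Selmer group via the orthogonality relations $W_{S'} = (W^{S'})^\perp$ and $I_{S'} = (I^{S'})^\perp$, then verify the intersection description by translating the unramified-outside-$S$ condition and the local conditions at $v \in S$ into membership in $W_v$ (resp.\ $W^v$). Your treatment is slightly more explicit in two places---you spell out that $W^v = W_v$ at odd $v \in S' \setminus S$, and you give a case analysis (split/inert/ramified) for the dual Selmer condition at $v \in S$---whereas the paper handles these more tersely, but the substance is the same.
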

\begin{proof}
Since $I_{S'},I^{S'}$ are orthogonal complements of each other and $W_{S'},W^{S'}$ are orthogonal complements of each other it will suffice to prove that $\Sel(\T, S) = I_{S'} \cap W_{S'}$ and $\Sel(\what{\T}, S) = I^{S'} \cap W^{S'}$. We begin by noting that since $\Pic(\OO_{S'}) = 0$ the group $H^1(\OO_S,\ZZ/2)$ can be identified with the subgroup of $I_{S'}$ consisting of those elements $a \in I_{S'}$ whose valuations are even at every $v \in S' \bksl S$. This is equivalent to saying that the local image of $a$ in $V_v$ belongs to $W_v$ for every $v \in S' \bksl S$. Similarly, we may identify $H^1(\OO_S,\ZZ/2)$ with the subgroup of $I^{S'}$ consisting of those elements whose local image in $V^v$ belongs to $W^v$ for every $v \in S' \bksl S$.

Now, given an $a \in H^1(\OO_S,\ZZ/2) \subseteq I_{S'}$ and a place $v \in S$, the condition that the local image of $a$ in $V_v$ lies in $W_v$ is by the definition the condition $\left<a,d\right>_v = 0$, which is equivalent to the existence of a $k_v$-point on $\Z^a$. It follows that $\Sel(\T, S) = I_{S'} \cap W_{S'}$.

On the dual side, given an element $a \in H^1(\OO_S,\ZZ/2) \subseteq I^{S'}$, the condition that the local image of $a$ in $V^v$ belongs to $W^v$ for $v \in S$ is equivalent to the condition that the extension $K_w(\sqrt{a})$ splits, where $w$ is the unique place of $K$ lying above $v$. It hence follows that an element $a \in H^1(\OO_S,\ZZ/2)$ lies in $W^S$ if and only if every place of $T$ splits in $K(\sqrt{a})$, and so by Corollary~\ref{c:dual} we have $\Sel(\what{\T}_S) = I^{S'} \cap W^{S'}$ as desired.
\end{proof}

\begin{rem}\label{r:ranks}
Let $S_{\Split} \subseteq S_0$ denote the places of $S_0$ which split in $K$. By Dirichlet's unit theorem it follows that $\dim_2I_{S'} = |S'|$ and by our construction it follows that $\dim_2 W^{S'} = |S'| - |S_{\Split}|$. By Proposition~\ref{p:2-descent} we may conclude that
$$ \dim_2\Sel(\T,S) - \dim_2\Sel(\what{\T},S) = \dim_2I_{S'} - \dim_2 W^{S'} = |S_{\Split}| .$$
\end{rem}

\subsection{Back to $S_0$-integral points}

We shall now turn our attention to our motivating problem, and attempt to apply the above $2$-descent formalism to obtain sufficient condition for the solubility in $\OO_{S_0}$ of our equation of interest~\ref{e:torsor} (see Corollary~\ref{c:final-step} below). We begin with the following proposition.

\begin{prop}\label{p:final-step}
Assume Condition~\ref{a:base} is satisfied and let $a | d$ be an element dividing $d$. Let $\Z^a_0$ be the $\OO_{S_0}$-scheme given by~\ref{e:torsor} and let $\Z^a = \Z^a_0 \otimes_{\OO_{S_0}} \OO_S$ be the corresponding base change. If $\Z^a$ has an $S$-integral point then $\Z^a_0$ has an $S_0$-integral point.
\end{prop}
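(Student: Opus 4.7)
My plan is to prove this by a purely local valuation analysis: I will show that any $S$-integral solution is automatically $S_0$-integral, by checking that its coordinates have non-negative valuation at every place in $S \setminus S_0$.

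The first step is to observe what the extra places in $S \setminus S_0$ look like. By construction, $S$ was obtained from $S_0$ by adjoining the places ramifying in $K = k(\sqrt{d})$ and the places above $2$. For any such $v \in S \setminus S_0$, Assumption~\ref{a:base} gives $\val_v(d) \leq 1$, with equality if $v \mid 2$. For $v \nmid 2$ in $S \setminus S_0$, $v$ must ramify in $K$, which forces $\val_v(d)$ to be odd and hence equal to $1$. So in all cases $\val_v(d) = 1$. Since $a \mid d$ in $\OO_{S_0}$ and $ab = -d$, it follows that $\{\val_v(a), \val_v(b)\} = \{0,1\}$.

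Now let $(x_0, y_0) \in \OO_S \times \OO_S$ be an $S$-integral solution to $ax_0^2 + by_0^2 = 1$, and fix $v \in S \setminus S_0$. The key point is a parity obstruction: since $\val_v(a) + \val_v(b) = 1$, the two quantities $\val_v(ax_0^2) = \val_v(a) + 2\val_v(x_0)$ and $\val_v(by_0^2) = \val_v(b) + 2\val_v(y_0)$ have opposite parities, hence are never equal. Therefore
\[
0 = \val_v(1) = \val_v(ax_0^2 + by_0^2) = \min\bigl(\val_v(a) + 2\val_v(x_0),\ \val_v(b) + 2\val_v(y_0)\bigr).
\]
An even integer and an odd integer whose minimum is $0$ are forced to satisfy: the odd one is $\geq 1$ and the even one is exactly $0$. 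Unwinding this in each of the two cases $\val_v(a) = 1, \val_v(b) = 0$ and $\val_v(a) = 0, \val_v(b) = 1$ yields $\val_v(x_0) \geq 0$ and $\val_v(y_0) \geq 0$ in both cases.

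Since this holds at every $v \in S \setminus S_0$, and since $(x_0, y_0) \in \OO_S^2$ already ensures non-negative valuation at all places outside $S$, we conclude that $x_0, y_0 \in \OO_{S_0}$, so $(x_0, y_0)$ defines an $S_0$-integral point of $\Z^a_0$. I don't anticipate a serious obstacle here — the whole argument is an elementary valuation computation, and the only non-trivial input is Assumption~\ref{a:base}, which was precisely designed so that $\val_v(d) = 1$ outside $S_0$ and makes this parity trick work.
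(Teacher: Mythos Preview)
Your proof is correct. Both your argument and the paper's hinge on the same preliminary observation that $\val_v(d)=1$ for every $v\in S\setminus S_0$, but after that they diverge. The paper passes through the ideal-theoretic description: it identifies $\Z^a_0(\OO_{S_0})$ with elements of the ideal $I_a=(a,\sqrt{d})\subseteq\OO_{T_0}$ of norm $a$, takes the $T$-unit $\beta$ of norm $a$ coming from the $S$-integral point, and checks $\beta\in I_a$ by showing $\val_w(\beta)>0$ at the ramified places above the divisors of $a$. This requires Lemma~\ref{l:concrete} (that $I_a$ is the $\OO_{S_0}$-span of $a$ and $\sqrt{d}$) to translate back to coordinates. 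Your route stays entirely on the affine-coordinate side: the parity mismatch between $\val_v(ax_0^2)$ and $\val_v(by_0^2)$ forces the minimum to be realised uniquely, pinning both valuations to be non-negative. This is more elementary and self-contained; the paper's version is the one that generalises naturally within its torsor formalism, but for the statement at hand your argument is shorter and avoids invoking the module structure of $I_a$.
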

\begin{proof}
Let $I_a$ be the $\OO_{T_0}$-ideal generated by $a,\sqrt{d}$. We need to show that $I_a$ admits a generator of norm $a$. Since $a$ is an $S$-unit the existence of an $S$-integral point on $\Z^a$ is equivalent to the existence of a $T$-unit $\bet \in \OO_T^*$ whose norm is $a$. It will hence suffice to show that such a $\bet$ must lie in $I_a$. By Condition~\ref{a:base} and the construction of $S$ we see that $\val_v(d) = 1$ for every $v \in S \bksl S_0$. It follows that every place $v \in S \bksl S_0$ is ramified in $K$. Let $S_a \subseteq S \bksl S_0$ be the subset of places where $a$ has a positive valuation and let $T_a$ be the set of places of $K$ lying above $S_a$. Then $\OO_{T_0}/I_a \cong \prod_{w \in T_a} \FF_w$ and it will suffice to show that $\val_w(\bet) > 0$ for every $w \in T_a$. But this, in turn, is true because the norm of $\bet$ is $a$.
\end{proof}

\begin{cor}\label{c:final-step}
Assume Condition~\ref{a:base} is satisfied. If $\Sel(\what{\T}, S)$ is generated by $[d]$ then for every $a | d$ the $\OO_{S_0}$-scheme $\Z^a_0$ given by~\ref{e:torsor} satisfies the $S_0$-integral Hasse principle.
\end{cor}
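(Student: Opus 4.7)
The plan is to combine the Pontryagin duality between the Tate--Shafarevich groups of $\T$ and $\what{\T}$ with the scheme-theoretic descent supplied by Proposition \ref{p:final-step}. The key observation is that the hypothesis on $\Sel(\what{\T}, S)$ will force $\Sha^2(\what{\T}, S)[2]$, and hence $\Sha^1(\T, S)[2]$, to be trivial; this will reduce the $S$-integral Hasse principle for $\Z^a$ to a formal consequence of the Selmer short exact sequence, and Proposition \ref{p:final-step} will transfer the conclusion from $\OO_S$ back down to $\OO_{S_0}$.

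First I would assume that $\Z^a_0$ has an $S_0$-integral adelic point and verify that the corresponding class $[a] \in H^1(\OO_S,\ZZ/2)$ lies in $\Sel(\T, S)$. At each $v \in S_0$ the adelic point directly supplies a $k_v$-point of $\Z^a_0$, while at each $v \in S \setminus S_0$ it supplies an $\OO_v$-integral point (since $v \notin S_0$); in either case this yields a $k_v$-point on the base change $\Z^a = \Z^a_0 \otimes_{\OO_{S_0}} \OO_S$. Since $\Z^a$ is the $\T$-torsor classified by the image of $[a]$ in $H^1(\OO_S, \T)$, this indeed places $[a]$ in $\Sel(\T, S)$.

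Next I would run the duality argument. From the short exact sequence
$$ 0 \lrar H^1(\OO_S,\what{\T})/2 \lrar \Sel(\what{\T}, S) \lrar \Sha^2(\what{\T}, S)[2] \lrar 0 $$
together with the already established fact that the leftmost term is generated by $[d]$, the hypothesis $\Sel(\what{\T}, S) = \langle [d]\rangle$ forces $\Sha^2(\what{\T}, S)[2] = 0$. The perfect pairing~\ref{e:pairing-sha} exhibits $\Sha^1(\T, S)$ and $\Sha^2(\what{\T}, S)$ as mutually Pontryagin dual finite abelian groups, and on any finite abelian group the $2$-torsion and the mod $2$ reduction have the same order, so passing to $2$-torsion yields $\Sha^1(\T, S)[2] = 0$. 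Feeding this into
$$ 0 \lrar \T(\OO_S)/2 \lrar \Sel(\T, S) \lrar \Sha^1(\T, S)[2] \lrar 0 $$
shows that the image of $[a]$ in $H^1(\OO_S, \T)$ must vanish, so $\Z^a$ is a trivial $\T$-torsor and in particular admits an $S$-integral point.

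Finally, Proposition \ref{p:final-step} promotes this $S$-integral point of $\Z^a$ to an $S_0$-integral point of $\Z^a_0$, completing the argument. I do not expect any serious obstacle: every ingredient is already in place, and the only non-formal input is the Artin--Verdier/Milne duality~\ref{e:pairing-sha} quoted from \cite{Mil06} at the start of \S\ref{ss:selmer}. The remaining steps are purely diagrammatic manipulations of the two Selmer short exact sequences built in \S\ref{ss:selmer}.
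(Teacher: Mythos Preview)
Your proposal is correct and follows essentially the same route as the paper's proof: both argue that the hypothesis on $\Sel(\what{\T},S)$ kills $\Sha^2(\what{\T},S)[2]$, invoke the perfect pairing~\ref{e:pairing-sha} to obtain $\Sha^1(\T,S)[2]=0$, observe that an $S_0$-integral adelic point on $\Z^a_0$ forces the torsor class of $\Z^a$ into this vanishing group, and then apply Proposition~\ref{p:final-step}. The only cosmetic difference is that you package the middle step via membership in $\Sel(\T,S)$ and the Selmer short exact sequence, whereas the paper speaks directly of the class in $\Sha^1(\T,S)[2]$; these are the same statement.
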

\begin{proof}
Assume that $\Z^a_0$ has an $S$-integral adelic point. If $\Sel(\what{\T}, S)$ is generated by $[d]$ then $\Sha^2(\what{\T}, S)[2] = 0$ and by the perfect pairing~\ref{e:pairing-sha} we may deduce that $\Sha^1(\T, S)[2] = 0$. Since $\Z^a_0$ has an $S_0$-integral adelic point the base change $\Z^a = \Z^a_0 \otimes_{\OO_{S_0}} \OO_S$ has an $S$-integral adelic point. The class of $\Z^a$ as a $\T$-torsor then lies in $\Sha^1(\T, S)[2]$, and since the latter group vanishes it follows that $\Z^a$ has an $S$-integral point. The result now follows from Proposition~\ref{p:final-step}.
\end{proof}

\subsection{The weak and strict Selmer groups}\label{ss:strict-selmer}
In this subsection we shall describe a variant of the construction above, which we refer to as the \textbf{strict Selmer group} and the \textbf{weak dual Selmer group}. The advantage of these variants over the Selmer and dual Selmer groups is that their behaviour is easier to control over families of algebraic tori as those considered in \S\ref{s:main}.

Keeping the notation of \S\ref{ss:selmer}, let $S_1 \subseteq S$ be a finite set of places, disjoint from $S_0$. Let $W^{-}_{S'} \subseteq W_{S'}$ be the subspace spanned by those tuples $(a_v)_{v \in S'}$ such that $\sum_{v \in S_1} \val_v a_v$ is even and let $W_{+}^{S'} \subseteq V^{S'}$ be the orthogonal complements of $W^{-}_{S'}$. 

\begin{define}
We define the \textbf{strict Selmer group} by 
$$ \Sel^{-}(\T,S,S_1) \x{\df}{=} I_{S'} \cap W^{-}_{S'} $$ 
and the \textbf{weak dual Selmer group} by 
$$ \Sel^{+}(\what{\T},S,S_1) \x{\df}{=} I^{S'} \cap W_{+}^{S'} .$$
\end{define}

\begin{rem}\label{r:2-descent-strict}
The left kernels of both $I_{S'} \times W_{+}^{S'} \lrar \ZZ/2$ and $W^{-}_{S'} \times I^{S'} \lrar \ZZ/2$ can be identified with the strict Selmer group. Similarly, the right kernels of these pairings can be identified with weak dual Selmer group. 
\end{rem}

\begin{rem}\label{r:ranks-2}
Let $S_{\Split} \subseteq S_0$ denote the places of $S_0$ which split in $K$. By Dirichlet's unit theorem it follows that $\dim_2I_{S'} = |S'|$ and by our construction we have $\dim_2 W_+^{S'} = |S'| - |S_{\Split}| + 1$. Using Remark~\ref{r:2-descent-strict} we may conclude that
$$ \dim_2\Sel^{-}(\T,S) - \dim_2\Sel^{+}(\what{\T},S) = \dim_2I_{S'} - \dim_2 W_+^{S'} = |S_{\Split}| - 1 .$$
\end{rem}

\begin{rem}
In light of Proposition~\ref{p:2-descent} we have natural inclusions
$$ \Sel^{-}(\T,S,S_1) \subseteq \Sel(\T,S) $$
and
$$ \Sel(\what{\T},S) \subseteq \Sel^{+}(\what{\T},S,S_1) $$
whose cokernels are at most $1$-dimensional. Furthermore, combining Remark~\ref{r:ranks} and Remark~\ref{r:ranks-2} we see that exactly one of these inclusions is an isomorphism.
\end{rem}

\section{Integral points on pencils of affine conics}\label{s:main}

In this section we will apply the $2$-descent formalism of \S\ref{ss:selmer} in order to establish sufficient conditions for the existence of $S$-integral points on certain schemes which are fibered into affine conics. Our strategy is an adaptation of Swinnerton-Dyer's descent-fibration method. To obtain unconditional results we opt not to incorporate Schinzel's hypothesis in the proof, but rely instead on a theorem of Green, Tao and Ziegler (see \S\ref{ss:gtz}). This requires, in particular, that we restrict our attention to the field $\QQ$ of rational numbers.

Let us begin by describing the type of fibered schemes we will be interested in. Let $S_0$ be a finite set of places of $\QQ$ containing the real place. Let $\I$ be a finite non-empty set of even size. For each $i \in \I$, let $c_i,d_i \in \ZZ_{S_0}$ be $S_0$-integral $S_0$-coprime elements such that 
$$ \Del_{i,j} \x{\df}{=} c_jd_i - c_id_j \neq 0 $$ 
for every $i \neq j$. Let us denote $p_i(t,s) = c_it + d_is$, and observe that $\Del_{i,j} = p_j(d_i,-c_i)$. If $\J \subseteq \I$ is a subset then we will denote $p_{\J}(t,s) = \prod_{j \in \J}p_j$.

We will denote by $\PP^1_{S_0}$ the projective line over $\spec(\ZZ_{S_0})$. Let us fix a partition $\I = \cA \cup \cB$ (i.e., $\cA \cap \cB = \emptyset$) such that $|\cA| = 2n$ and $|\cB| = 2m$ are both even. Let $\E \lrar \PP^1_{S_0}$ be the vector bundle $\OO(-n) \oplus \OO(-m) \oplus \OO(0)$. Let $a,b \in \ZZ_S$ be non-zero $S_0$-integers and let $\ovl{\Y} \subseteq \PP(\E)$ be the closed subscheme determined by the equation
\begin{equation}\label{e:conic}
a p_{\cA}(t,s)x^2 + b p_{\cB}(t,s)y^2 = z^2.
\end{equation}

The scheme $\ovl{\Y}$ is a conic bundle over $\PP^1$. The equation $z = 0$ determines a subscheme $\Z \subseteq \ovl{\Y}$ which meets each smooth fiber of $\ovl{\Y} \lrar \PP^1_{S_0}$ at two points. Our main scheme of interest in this paper is the open subscheme $\Y = \ovl{\Y} \bksl \Z$, which is fibered over $\PP^1_{S_0}$ into affine conics. More explicitly, if we let $\F \lrar \PP^1_S$ be the vector bundle $\OO(-n) \oplus \OO(-m)$ then we may recover $\Y$ as the subvariety of $\F$ given by the equation
\begin{equation}\label{e:conic-2}
ap_{\cA}(t,s)x^2 + bp_{\cB}(t,s)y^2 = 1.
\end{equation}
We will denote by $p: \Y \lrar \PP^1_{S_0}$ the associated pencil of affine conics and by $Y = \Y \otimes_{\ZZ_{S_0}} \QQ$ the base change of $\Y$ to the field of fractions $\QQ$.

When $n=1$ and $m=0$, the surface $Y$ is \textbf{log rationally connected} (see~\cite{Zh14}), and the question of integral points on $\Y$ is strongly related to the question of integral points affine 2-dimensional quadrics, as those studied in~\cite{CTX09}. The next simplest case is when $n=m=1$, in which case the surface $Y$ is a \textbf{log K3 surface}. Let us recall the definition.  
\begin{define}\label{d:k3}
A \textbf{log K3 surface} is a smooth, geometrically integral and simply connected surface $Z$ equipped with a compactification $Z \subseteq \ovl{Z}$ such that the divisor $D = \ovl{Z} \bksl Z$ is a simple normal crossing divisor and the class $[D] \in \Pic(\ovl{Z})$ is equal to the anti-canonical class of $\ovl{Z}$.
\end{define}

\begin{rem}\label{r:k3}
Definition~\ref{d:k3} is slightly more restrictive then other definitions which appear in the literature (see, e.g,~\cite{Ii79},~\cite{Zh87}). In particular, many authors do not require $Z$ to be simply-connected, but require instead the weaker property that there are no global $1$-forms on $Z$ which extend to $\ovl{Z}$ with logarithmic singuliarites along $D$. In another direction, some authors relax the condition that $[D] + K_{\ovl{Z}} = 0$ and replace it with the condition that $\dim H^0(\ovl{Z},n([D]+K_{\ovl{Z}})) = 1$ for all $n \geq 0$. In this more general context one may consider Definition~\ref{d:k3} as isolating the simplest kind of log K3 surfaces. It will be interesting to try to extend the methods for integral points to the more general setting as well.
\end{rem}

\begin{prop}\label{p:k3}
If $|\cA| = |\cB| = 2$ then the surface $Y$ is a log K3 surface.
\end{prop}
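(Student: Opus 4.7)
The plan is to verify the four conditions of Definition~\ref{d:k3} for the compactification $\ovl{\Y} \subset \PP(\E)$ with $\E = \OO(-1)\oplus\OO(-1)\oplus\OO$ and boundary $D = \Z = \{z=0\}\cap\ovl{\Y}$.

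First I would verify smoothness of $\ovl{\Y}$ together with the SNC property of $D$. Away from the four (distinct, by $\Del_{i,j} \neq 0$) zeros of $p_{\cA}p_{\cB}$ the fibers are smooth conics and nothing needs checking; at a node $[1:0:0]$ of the singular fiber over a zero $(t_0:s_0)$ of some $p_i$ with $i \in \cA$, one works in the chart $x = 1$ with coordinates $(Y',Z') = (y/x, z/x)$, whose local equation is $ap_{\cA}(t,s) + bp_{\cB}(t,s)Y'^2 - Z'^2 = 0$; since $p_i$ has a simple root at $(t_0:s_0)$ (by the $\Del_{i,j}\neq 0$ hypothesis) and $p_j(t_0,s_0) \neq 0$ for $j \neq i$, the gradient in the $(t,s)$-direction at the node is nonzero, proving smoothness. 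An analogous calculation handles roots of $p_{\cB}$. The boundary $D$ is defined by $z = 0$ and $ap_{\cA}x^2 + bp_{\cB}y^2 = 0$ and projects to $\PP^1$ as a double cover branched at the four distinct zeros of $p_{\cA}p_{\cB}$, hence $D$ is a smooth irreducible genus-$1$ curve, trivially SNC.

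For the anticanonical triviality, note that $\ovl{\Y}$ is cut out in $\PP(\E)$ by a global section of $\operatorname{Sym}^2\E^* = \pi_*\OO_{\PP(\E)}(2)$, so $[\ovl{\Y}] = 2\xi$ with $\xi = c_1(\OO_{\PP(\E)}(1))$. A standard projective-bundle computation via the relative Euler sequence yields $K_{\PP(\E)} = -3\xi$, and adjunction gives $K_{\ovl{\Y}} = (-3\xi + 2\xi)|_{\ovl{\Y}} = -\xi|_{\ovl{\Y}}$. On the other hand $z$ is a global section of $\OO_{\PP(\E)}(1)$ coming from the $\OO$-summand of $\E^* = \OO(1)^{\oplus 2}\oplus\OO$, so $\{z = 0\}$ has class $\xi$ in $\PP(\E)$ and $[D] = \xi|_{\ovl{\Y}} = -K_{\ovl{\Y}}$.

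Finally, geometric integrality of $Y$ is immediate since the generic fiber of $\ovl{\Y}\rar\PP^1$ is a geometrically integral smooth conic. For simple-connectedness, I pass to $\ovl{\QQ}$: the generic conic acquires a rational point by Tsen's theorem, so $\ovl{\Y}_{\ovl{\QQ}}$ is birational to $\PP^1\times\PP^1$ and is a smooth rational surface, hence simply connected. Geometrically each singular fiber splits into two $(-1)$-curves $L_i^{\pm}$ meeting at a node through which $D$ passes transversally, giving $D \cdot L_i^{\pm} = 1$, so $[D] = -K_{\ovl{\Y}}$ is primitive in $H^2(\ovl{\Y}_{\ovl{\QQ}},\ZZ)$. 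By the Gysin sequence this forces $H_1(Y_{\ovl{\QQ}},\ZZ) = 0$, and Nori's Zariski-Lefschetz-type theorem (applicable since $D$ is smooth and connected with $D^2 = 4 > 0$ on the simply-connected surface $\ovl{\Y}_{\ovl{\QQ}}$) shows that the kernel of $\pi_1(Y_{\ovl{\QQ}}) \to \pi_1(\ovl{\Y}_{\ovl{\QQ}}) = 1$ is cyclic, whence $\pi_1(Y_{\ovl{\QQ}}) = H_1(Y_{\ovl{\QQ}},\ZZ) = 0$. The main obstacle will be the bookkeeping of projective-bundle conventions in the canonical-class computation and the verification of primitivity of $[D]$ via the geometric components of the singular fibers, which are only visible after base change to $\ovl{\QQ}$.
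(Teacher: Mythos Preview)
Your proof is correct and essentially complete, but it takes a noticeably different route from the paper's own argument in two places.

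For the anticanonical identification $[D] = -K_{\ovl{Y}}$, the paper simply observes that a conic bundle over $\PP^1$ with exactly four degenerate fibers is a del Pezzo surface of degree $4$, and that the genus-$1$ curve $D = \{z=0\}$ is an anticanonical section; you instead compute $K_{\PP(\E)}$ from the relative Euler sequence and apply adjunction. Your computation is right (the pullback contributions $\pi^*K_{\PP^1}$ and $-\pi^*c_1(\E)$ cancel precisely because $\det\E = \OO(-2) = \omega_{\PP^1}$ when $n=m=1$), though it would be worth making that cancellation explicit rather than writing $K_{\PP(\E)} = -3\xi$ without comment.

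For simple-connectedness the two arguments diverge more substantially. The paper invokes a lemma from~\cite{Ha} asserting that $\ovl{Y}\setminus D$ is simply connected as soon as $\ovl{Y}_{\ovl{\QQ}}$ contains a rational curve meeting $D$ transversally in a single point, and then exhibits such a curve explicitly as a section of the conic bundle (obtained by choosing $\lam,\mu$ so that $\lam a p_{\cA} + \mu b p_{\cB}$ becomes a perfect square). You instead combine Nori's theorem (cyclic kernel when $D$ is smooth with $D^2>0$) with the Gysin sequence and the primitivity of $[D]$ witnessed by $D\cdot L_i^{\pm}=1$. The two approaches are closely related --- the paper's rational curve is exactly a witness to the primitivity you need, and the lemma in~\cite{Ha} is presumably proved by an argument of your type --- but your version is self-contained at the cost of invoking Nori, while the paper's is more elementary once one accepts the cited lemma and has the pleasant feature of producing the relevant curve by hand.
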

\begin{proof}
The smooth surface $Y$ is contained in the smooth proper surface $\ovl{Y}$, which is a conic bundle over $\PP^1$. When $|\cA| = |\cB| = 2$ the conic bundle $\ovl{Y}$ has exactly four singular fibers and one can easily verify that $\ovl{Y}$ is a \textbf{del Pezzo surface of degree $4$}. The divisor $D  = \ovl{Y} \bksl Y$ (given by $z=0$) is a genus $1$ curve whose class is the anti-canonical class of $\ovl{Y}$. To show that $Y$ is a log K3 surface it will hence suffice to show that $Y$ is simply connected. In light of~\cite[Lemma 3.3.6]{Ha} it will be enough to show that $\ovl{Y} \otimes_k \ovl{k}$ posses a rational curve $L$ which meets $D$ transversely in exactly one point. Let $\lam,\mu \in \ovl{\QQ}$ be such that $\disc(\lam ap_{\cA} + \mu bp_{\cB}) = 0$. Then there exists a linear homogeneous polynomial $ct + ds$ such that $\lam ap_{\cA} + \mu bp_{\cB} = (ct + ds)^2$ with $c,d \in \ovl{\QQ}$. Let $f: \PP^1 \lrar \ovl{Y} \otimes_k \ovl{k}$ be the section of the map $\ovl{Y} \lrar \PP^1$ given by $x = \sqrt{\lam}, y = \sqrt{\mu}, z = ct + ds$. Then we may take $L$ to be the image of $f$, which meets $D$ transversely at the point $f(-d:c)$.
\end{proof}

\begin{rem}
In the setting of Proposition~\ref{p:k3}, $Y$ is a log K3 surface which is the complement of an anti-canonical class divisor inside a del Pezzo surface. The work of Hassett and Tschinkel (see, in particular,~\cite[Theorem 6.18]{HT01}) implies that integral points on $Y$ are \textbf{potentially dense}, i.e., they become Zariski dense after a finite extension of the base field and a finite enlargement of the set $S_0$. 
\end{rem}

\subsection{Main results}\label{s:main}

Our goal in this section is to formulate our main theorem, giving sufficient conditions for the surface $\Y$ to have an $S_0$-integral point (see Theorem~\ref{t:main-uncond}). In order to study $S_0$-integral points on $\Y$, it will be useful to introduce a closely related $3$-dimensional scheme dominating $\Y$. Let $\X = (\A^2_{S_0} \setminus \{(0,0)\}) \times_{\PP^1_{S_0}} \Y$ be the pullback along the natural map $\A^2_{S_0} \setminus \{(0,0)\} \lrar \PP^1_{S_0}$ and let $q: \X \lrar \Y$ denote the projection on the second coordinate. We then observe that $\X$ can be identified with the subscheme of $\spec\ZZ_{S_0}[t,s,x,y]$ given by intersecting equation~\ref{e:conic-2} with the condition $(t,s) \neq (0,0)$. If $n,m > 0$ then equation~\ref{e:conic-2} already implies that $(t,s) \neq (0,0)$ and hence $\X$ is just the affine variety determined by~\ref{e:conic-2}. We note that $\X(\ZZ_{S_0})$ can be identified with the set of tuples $(t,s,x,y)$ of $S_0$-integers satisfying equation~\ref{e:conic-2} and such that $t,s$ are coprime in $\ZZ_{S_0}$. As mentioned above, if $n,m > 0$ then the last condition is automatic. We will denote by $\pi: \X \lrar \PP^1_{S_0}$ the composed map $\pi = p \circ q$ and by $X = \X \otimes_{\ZZ_{S_0}} \QQ$ the corresponding base change to $\QQ$.

\begin{rem}\label{r:smooth}
Let $S' = S_0 \cup \{2\}$. Then $\Y \otimes_{\OO_{S_0}} \OO_{S'}$ is a smooth $\OO_{S'}$-scheme and the map $p': \Y \otimes_{\OO_{S_0}} \OO_{S'} \lrar \PP^1_{S'}$ is smooth.
\end{rem}
 
\begin{rem}\label{r:X}
Since $\Pic(\ZZ_{S_0}) = 0$ and the map $q:\X \lrar \Y$ is a $\GG_m$-torsor it follows that the map
$$ \X(\ZZ_{S_0}) \lrar \Y(\ZZ_{S_0}) $$
is surjective, and hence the existence of an $S_0$-integral point on $\X$ is equivalent to the existence of an $S_0$-integral point on $\Y$. More explicitly, $S_0$-integral points of $\X$ can be pararmeterized by tuples $(t,s,x,y) \in \ZZ_{S_0}^4$ satisfying equation~\ref{e:conic-2}, and such that $t,s$ are coprime in $\ZZ_{S_0}$. Furthermore, $(t,s,x,y),(t',s',x',y') \in \X(\ZZ_{S_0})$ map to the same point of $\Y$ if and only if there exists an $S_0$-unit $u \in \ZZ_{S_0}^*$ such that $(t',s',x,y) = (ut,us,u^{-n}x,u^{-m}y)$. A similar statement holds for $k_v$-points and $\OO_v$-points of the associated local models.
\end{rem}

In order to formulate our theorem we will need some additional terminology.

\begin{define}
Given a subset $\J \subseteq \I$ we will denote by $\J^c = \I \setminus \J$ the complement of $\J$ in $\I$.
\end{define}

\begin{define}\label{d:D}
Let $d = ab$. Given a subset $\J \subseteq \I$ and an element $i \in \I$ we will denote by 
$$ D^{\J}_i = \left\{\begin{matrix} p_{\J}(d_i,-c_i) & i \notin \J \\
dp_{\J^c}(d_i,-c_i) & i \in \J \\
\end{matrix}\right. $$
and
$$ \what{D}^{\J}_i = \left\{\begin{matrix} p_{\J}(d_i,-c_i) & i \notin \J \\
-dp_{\J^c}(d_i,-c_i) & i \in \J \\
\end{matrix}\right. $$
\end{define}

\begin{rem}\label{r:complement}
By Definition~\ref{d:D} above we have $[D^{\J^c}_i] = [dD^{\J}_i] \in \QQ^*/(\QQ^*)^2$ and $[\what{D}^{\J^c}_i] = [-d\what{D}^{\J}_i] \in \QQ^*/(\QQ^*)^2$ for every $\J \subseteq \I$ and $i \in \I$. Furthermore, $D^{\emptyset}_i = \what{D}^{\emptyset}_i = 1$ and $D^{\I}_i = -\what{D}^{\I}_i = d$.
\end{rem}

\begin{define}
We will denote by $\G$ the abelian group which is a direct sum of $\QQ^*/(\QQ^*)^2$ and the $\FF_2$-vector space spanned by the formal symbols $[p_i]$ for $i \in \I$. We will denote elements in $\G$ \textbf{multiplicatively} as $[c][p_\J] = [c]\prod_{i \in \J} [p_i]$ where $\J \subseteq \I$ is some subset and $c \in \QQ^*$.
\end{define}

\begin{rem}\label{r:mult}
If $[p_{\J}] = [p_{\J'}]\cdot[p_{\J''}] \in \G$ then for every $i \in \I$ we have $[D^{\J}_i] = [D^{\J'}_i]\cdot[D^{\J''}_i] \in \QQ^*/(\QQ^*)^2$. This follows from the identities $[p_{\J}] = [p_{(\J')^c}]\cdot[p_{(\J'')^c}]$ and $[p_{\J^c}] = [p_{(\J')^c}]\cdot[p_{\J''}] = [p_{\J'}]\cdot[p_{(\J'')^c}]$.
\end{rem}

We shall now identify a few important subgroups of $\G$.
\begin{define}\label{d:G_i}
Let $i \in \I$ be an element. We will denote by $\G_i \subseteq \G$ the subgroup containing those elements $[c][p_{\J}]$ such that $|\J|$ is even and 
$$ \left[cD^{\J}_i\right] \in \left\{1,\left[aD^{\cA}_i\right]\right\} \subseteq \QQ^*/(\QQ^*)^2 .$$ 
We will denote by $\G^i \subseteq \G$ the subspace of those elements $[c][p_{\J}]$ such that $|\J|$ is even and
$$ \left[c\what{D}^{\J}_i\right] \in \left\{1,\left[aD^{\cA}_i\right]\right\} \subseteq \QQ^*/(\QQ^*)^2 .$$ 
Finally, let us denote by $\G_{D} = \cap_{i \in \I} \G_i$ and $\G^{D} = \cap_{i \in \I} \G^i$. 
\end{define}

\begin{rem}
It is not hard to verify that $\G_D$ and $\G^D$ are finite groups.
\end{rem}

We are now ready to formulate our main condition on the $\Del_{i,j}$. This condition is analogous to Condition (D) of~\cite{CTSSD98b} and~\cite{CT01}, and we hence use the same name.

\begin{assume}[Condition (D)]
The group $\G_D$ is generated by $[a][p_{\cA}]$ and $[d][p_{\I}]$ and the group $\G^D$ is generated by $[-d][p_{\I}]$.
\end{assume}

Before we state our main theorem let us recall the definition of the vertical Brauer group:
\begin{define}
Let $f: X \lrar Y$ be a map of varieties over a field $k$. The \textbf{vertical Brauer group} $\Br^{\verti}(X,f)$ is defined to be the intersection 
$$ \Br^{\verti}(X,f) \x{\df}{=} \Br(X) \cap f^*\Br(k(Y)) \subseteq \Br(k(X)) $$
\end{define}

We are now ready to state our main theorem.
\begin{thm}\label{t:main-uncond}
Let $S_0$ be a finite set of places of $\QQ$ containing $\infty$. Let $\I$ be a non-empty set of even size and for each $i \in \I$ let $c_i,d_i \in \ZZ_{S_0}$ be coprime elements such that $\Del_{i,j} := c_id_j - c_jd_i \neq 0$ whenever $i \neq j$. Let us denote $p_i(t,s) = c_it + d_is$ and $p_{\J} = \prod_{i \in \J} p_i$ for $\J \subseteq \I$ as above. Fix a pair $a,b$ of non-zero $S_0$-integers and a partition $\I = \cA \cup \cB$ into even sized subsets and consider the pencil of affine conics $\Y \lrar \PP^1_{S_0}$ given by~\ref{e:conic-2}. Assume that Condition (D) is satisfied and that there exists an $S_0$-integral adelic point $(P_v) = (t_v,s_v,x_v,y_v)$ of $\X = (\A^2_{S_0} \setminus \{(0,0)\}) \times_{\PP^1_{S_0}} \Y$ such that
\begin{enumerate}[(1)]
\item
$\val_v(dp_{\I}(t_v,s_v)) \leq 1$ for every $v \notin S_0$ and $\val_2(dp_{\I}(t,s)) = 1$ if $2 \notin S_0$.
\item
There exist two places $v^1_\infty,v^2_\infty \in S_0$ such that $-dp_{\I}(t_{v^i_{\infty}},s_{v^i_{\infty}})$ is a non-zero square in $\QQ_{v^i_\infty}$ for $i=1,2$ (note that despite the notation $v^1_\infty$ and $v^2_\infty$ are not assumed to be infinite places, just places of $S_0$).
\item
The image of $(P_v)$ in $\Y(\A_{S_0})$ is orthogonal to $\Br^{\verti}(Y,p)$.
\end{enumerate}
Then there exists an $S_0$-integral point on $\Y$.
\end{thm}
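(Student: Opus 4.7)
The plan is to implement a descent-fibration strategy adapted from Swinnerton-Dyer, in which torsors under abelian varieties are replaced by the quadratic-norm-torus torsors $\Z^{a}_0$ studied in Section~\ref{s:descent}. The two basic steps are: first, use a fibration-style argument to choose a carefully-prescribed rational point $(t_0 : s_0) \in \PP^1(\QQ)$ whose fiber $\Y_{(t_0, s_0)}$ is a torsor of the form~\ref{e:torsor}; second, apply the $2$-descent formalism already developed, via Corollary~\ref{c:final-step}, to show that this specific fiber has an $S_0$-integral point.

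For the first step, I would appeal to the theorem of Green--Tao--Ziegler (Section~\ref{ss:gtz}) to produce coprime integers $(t_0, s_0) \in \ZZ^2_{S_0}$ simultaneously (a) approximating $(t_v, s_v)$ at every $v$ in a large auxiliary finite set containing $S_0$, and (b) such that each $p_i(t_0, s_0)$ equals, up to an $S_0$-unit and a sign, a single prime $\ell_i \notin S_0$, with the $\ell_i$ pairwise distinct. Hypothesis (3) that $(P_v)$ is orthogonal to $\Br^{\verti}(Y, p)$, combined with Harari's formal lemma, then lets us adjust the adelic point so that the fiber over $(t_0 : s_0)$ acquires a local point at every place. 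Hypothesis (1) translates, after the choice of $(t_0, s_0)$, into the fact that the relevant discriminant $d' = dp_{\I}(t_0, s_0)$ satisfies Assumption~\ref{a:base}, so the fiber is precisely a torsor $\Z^{a'}_0$ as in~\ref{e:torsor} with $a' = a p_{\cA}(t_0, s_0)$, carrying an $S_0$-integral adelic point.

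The descent step then consists of showing, for the finite set $S = S_0 \cup \{2\} \cup \{\ell_i\}_{i \in \I}$ and the associated quadratic norm torus $\T$, that $\Sel(\what{\T}, S)$ is generated by $[d']$. By Proposition~\ref{p:2-descent} and Corollary~\ref{c:dual}, its elements are represented by classes $[c][p_{\J}] \in \G$ with $|\J|$ even, subject to local splitting conditions. The condition at each $\ell_i$ forces $[c\,\what{D}^{\J}_i]$ to be a local square (after invoking Remark~\ref{r:mult}), while the condition at the two distinguished places $v^1_{\infty}, v^2_{\infty}$ where $-dp_{\I}$ is a square (hypothesis (2)) restricts the class further to lie in $\{1, [aD^{\cA}_i]\}$. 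These are precisely the conditions defining the subgroup $\G^D$ of Definition~\ref{d:G_i}, so Condition (D) directly gives $\Sel(\what{\T}, S) = \langle [d'] \rangle$. Corollary~\ref{c:final-step} then produces an $S_0$-integral point on $\Z^{a'}_0$, which via Remark~\ref{r:X} yields the desired $S_0$-integral point on $\Y$.

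The main obstacle is the first step: Green--Tao--Ziegler provides rational points with prescribed prime-value behaviour of the linear forms $p_i$, but coordinating this with both the $S_0$-adic approximation and the simultaneous unobstructing of the Brauer--Manin condition on the chosen fiber requires a delicate analysis of the vertical Brauer group $\Br^{\verti}(Y, p)$, whose elements are built from quaternion classes involving the $\Del_{i,j}$. A secondary technical point is the Hilbert-symbol accounting at $v^1_{\infty}$ and $v^2_{\infty}$: one must check that the two conditions $-dp_{\I} \in (\QQ^*_{v^i_\infty})^2$ are together exactly strong enough to pin down the classes $[-1]$ and $[d]$ in the dual-Selmer calculation, matching the $\{1, [aD^{\cA}_i]\}$ appearing in the definition of $\G^i$.
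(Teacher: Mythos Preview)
Your overall architecture is right in spirit, but the second step contains a genuine gap that cannot be closed as stated. You claim that for the admissible pair $(t_0,s_0)$ produced by Green--Tao--Ziegler, Condition~(D) directly forces $\Sel(\what{\T},S)$ to be generated by $[d']$. The problem is a local-versus-global mismatch: the dual Selmer condition at the prime $\ell_i$ says only that $c\what{D}^{\J}_i$ is a square \emph{in $\QQ_{\ell_i}$}, whereas membership in $\G^D$ (Definition~\ref{d:G_i}) requires the \emph{global} class $[c\what{D}^{\J}_i] \in \QQ^*/(\QQ^*)^2$ to lie in $\{1,[aD^{\cA}_i]\}$. The primes $\ell_i$ are essentially arbitrary primes handed to you by Green--Tao--Ziegler; being a square at one such prime places no constraint whatsoever on the global class. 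So Condition~(D) does not apply, and there is no reason for the dual Selmer group of a single admissible fiber to be small.

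What the paper actually does is an \emph{iterative} Swinnerton-Dyer reduction (Proposition~\ref{p:main-step}). Starting from a suitable partial adelic point $P_T$, if the weak dual Selmer group $\what{\Q}_{P_T}$ contains a class $x\neq 1,[-d][p_{\I}]$, one uses Condition~(D) and Chebotarev to manufacture a new place $w$ at which $x$ visibly fails the local condition, and then shows (Proposition~\ref{p:mazur-formula}, via an arithmetic-duality argument using Lemma~\ref{l:ortho}) that enlarging $T$ to $T\cup\{w\}$ strictly shrinks $\what{\Q}_{P_T}$. Repeating this finitely many times drives the weak dual Selmer group down to $\langle[-d][p_{\I}]\rangle$, and only then does Corollary~\ref{c:final-step} apply. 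Two further ingredients you are missing: first, the paper works with the \emph{strict} and \emph{weak} Selmer variants of \S\ref{ss:strict-selmer} precisely because the ordinary Selmer group depends on uncontrollable Hilbert symbols $\langle p_i(t,s),p_j(t,s)\rangle_{u_i}$ (see the discussion opening \S\ref{s:selmer-admis} and Corollary~\ref{c:indep}); second, the role of the two split places $v^1_\infty,v^2_\infty$ is not to impose local conditions pinning down $[-1]$ and $[d]$ as you suggest, but rather (via Remark~\ref{r:ranks-2}) to guarantee $\dim_2\Q_{P_T}>\dim_2\what{\Q}_{P_T}$, so that in the iteration one can always find a companion element $x'\in\Q_{P_T}$ needed to make the Selmer-shrinking step go through.
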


\begin{rem}
Let us say a few words on the conditions appearing in Theorem~\ref{t:main-uncond}. As mentioned above, Condition (D) is an analogue of the condition of the same name appearing in~\cite{CTSSD98b} and~\cite{CT01}. It is a completely algebraic condition, and can be shown to imply that the $2$-torsion $\Br(Y)[2]$ is contained in $\Br^{\verti}(Y,p)$, explaining to some extent the absence of any condition concerning non-vertical Brauer elements in Theorem~\ref{t:main-uncond}. Condition (1) of Theorem~\ref{t:main-uncond} is strongly related to the fact that the 2-descent formalism developed in \S\ref{ss:selmer} requires to work with norm $1$ tori associated to localizations of maximal orders. If one could extend arithmetic duality results to more general group schemes we expect this condition to become redundant. Finally, Condition (2) of Theorem~\ref{t:main-uncond} is a type of ``splitness condition'', where we want to insure the existence of certain local points on the divisor at infinity $\ovl{Y}\setminus Y$. For a brief discussion of this idea, see~\cite[Definition 1.0.6, Definition 1.0.8]{Ha}.
\end{rem}

The conditions of Theorem~\ref{t:main-uncond} are finitely verifiable, but are somewhat inexplicit. The following lemma identifies certain stronger conditions which are easier to check ``by hand''.
\begin{lem}\label{l:explicit}
Consider the following conditions. 
\begin{enumerate}[(1)]
\item
Let $U$ be the quotient of the $\FF_2$-vector space $\QQ^*/(\QQ^*)^2$ by the subspace spanned by $\{[a],[b]\}$. Then 
the subspace $V \subseteq U$ spanned by the images of $\{[\Del_{i,j}]\}_{i \neq j}$ attains its maximal dimension $1 + \left(\begin{matrix} |\I| \\ 2\end{matrix}\right)$. In other words, all the linear relations are spanned by $[\Del_{i,j}] = [-1][\Del_{j,i}]$.
\item
For every $\J \subseteq \I$ such that $\J \neq \emptyset,\I$ and $|\J|$ is even, the subspace $V_{\J} \subseteq \QQ^*/(\QQ^*)^2$ spanned by $\{[aD^{\J}_i]\}_{i \in \I}$ attains its maximal dimension $|\I| - 1$. In other words, the only non-trivial linear relation is $\prod_{i\in \I} [aD^{\J}_i] = 1$.
\item
Condition (D) holds and $\Br^{\verti}(Y,p)/\Br(k) = 0$.
\end{enumerate}
Then we have $(1) \Rightarrow (2) \Rightarrow (3)$.
\end{lem}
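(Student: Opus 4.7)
The plan is to handle the two implications separately, in both cases reducing to linear algebra over $\FF_2$ by means of the factorization of $D^{\J}_i$ in terms of the $\Delta_{i,j}$.

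For $(1)\Rightarrow(2)$, I first record the factorizations $D^{\J}_i=\prod_{j\in\J}\Delta_{i,j}$ for $i\notin\J$ and $D^{\J}_i=d\prod_{j\in\J^c}\Delta_{i,j}$ for $i\in\J$. I then pass to the quotient $U=(\QQ^*/(\QQ^*)^2)/\langle[a],[b]\rangle$, in which $[d]=[a][b]$ vanishes, so that the image of $[aD^{\J}_i]$ becomes $\sum_{j\in\J}[\Delta_{i,j}]$ or $\sum_{j\in\J^c}[\Delta_{i,j}]$ according as $i\notin\J$ or $i\in\J$. A direct calculation using $\Delta_{i,j}=-\Delta_{j,i}$ and the even parities of $|\I|$ and $|\J|$ shows that $\prod_{i\in\I}[aD^{\J}_i]=1$ in $\QQ^*/(\QQ^*)^2$, producing one nontrivial relation. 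To show it is the only one, consider any relation $\sum_i\epsilon_i[aD^{\J}_i]=0$, project it to $U$, rewrite each $[\Delta_{j,i}]$ as $[-1]+[\Delta_{i,j}]$, and group the terms by unordered cross-pairs $\{i,j\}$ with $i\in\J^c$, $j\in\J$. Condition~(1) forces both the coefficient of each $[\Delta_{i,j}]$ and the coefficient of $[-1]$ to vanish; the former yields $\epsilon_i+\epsilon_j=0$ for every cross-pair and, since $\J\neq\emptyset,\I$, this propagates to the $\epsilon_i$ being constant on $\I$. The two resulting cases (all $0$ or all $1$) give exactly the trivial and product relations, so $\dim V_{\J}=|\I|-1$.

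For $(2)\Rightarrow(3)$, two assertions must be proved: Condition~(D) and $\Br^{\verti}(Y,p)/\Br(k)=0$. For Condition~(D), direct substitution verifies that $[a][p_{\cA}],[d][p_{\I}]\in\G_D$ (their product being $[b][p_{\cB}]$) and $[-d][p_{\I}]\in\G^D$. Conversely, given $[c][p_{\J}]\in\G_D$, pick $\epsilon_i\in\{0,1\}$ with $[cD^{\J}_i]=[aD^{\cA}_i]^{\epsilon_i}$; multiplying over $i\in\I$ and using $\prod_i[D^{\J}_i]=1$ together with $|\I|$ even gives $\prod_i[aD^{\cA}_i]^{\epsilon_i}=1$, so condition~(2) applied to $\cA$ forces the $\epsilon_i$ to be all equal. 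If all $\epsilon_i=0$ then $[D^{\J}_i]=[c]$ is constant in $i$, producing a two-term relation among the $[aD^{\J}_i]$ that condition~(2) applied to $\J$ rules out unless $\J\in\{\emptyset,\I\}$, and those cases recover $1$ and $[d][p_{\I}]$. If all $\epsilon_i=1$, the identity $[D^{\J}_i D^{\cA}_i]=[D^{\J\triangle\cA}_i]$ (verified by tracking the four membership cases for $i$ in $\J$ and $\cA$) reduces to the previous case with $\J$ replaced by $\J\triangle\cA$, forcing $\J\triangle\cA\in\{\emptyset,\I\}$ and hence $\J\in\{\cA,\cB\}$, which back-substitutes to $[a][p_{\cA}]$ and $[b][p_{\cB}]$. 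The analogous argument for $\G^D$ runs in parallel, modulo the extra sign $\what{D}^{\J}_i=(-1)^{1_{\J}(i)}D^{\J}_i$: the cases $\J\in\{\cA,\cB\}$ now yield an equation of the shape $[a]=[-a]$ (respectively $[b]=[-b]$), impossible since $-1$ is not a square in $\QQ$, leaving only the $\J=\I$ case with $[c]=[-d]$.

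For the remaining assertion $\Br^{\verti}(Y,p)/\Br(k)=0$, I would invoke the standard description of the vertical Brauer group of a conic bundle over $\PP^1$: modulo constants, its classes are represented by quaternion symbols $(cp_{\J},ap_{\cA})$; the condition that such a symbol extends across each non-split fiber $\{p_i=0\}$ is precisely that $[c][p_{\J}]\in\G_D$; and two symbols are equivalent when they differ by a ``totally split'' class, which generate the subgroup $\langle[a][p_{\cA}],[d][p_{\I}]\rangle$. Thus $\Br^{\verti}(Y,p)/\Br(k)\cong\G_D/\langle[a][p_{\cA}],[d][p_{\I}]\rangle$, and Condition~(D) just proved makes this quotient trivial. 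The main obstacle will be the case analysis in the proof of Condition~(D): one must organize the bookkeeping so that condition~(2), applied to each of the subsets $\cA$, $\J$, and $\J\triangle\cA$ at the right moment, excludes every $\J$ outside $\{\emptyset,\cA,\cB,\I\}$, and in the dual version keeps track of the $[-1]$ coming from $\what{D}^{\J}_i$. The argument is routine once this dictionary is in place, but the subcase-by-subcase organization is the technical heart of the proof.
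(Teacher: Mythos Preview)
For $(1)\Rightarrow(2)$ your argument is essentially the paper's: expand $\prod_i[aD^{\J}_i]^{\epsilon_i}$ into cross-pair $[\Delta_{i,j}]$'s and use condition~(1) to read off $\epsilon_i+\epsilon_j=0$ for every $i\in\J^c$, $j\in\J$. For Condition~(D) you take a genuinely different route. The paper argues by direct casework: if $[cD^{\J}_i]=1$ for two distinct $i$, then two of the $[aD^{\J}_i]$ coincide, so condition~(2) applied to $\J$ forces $\J\in\{\emptyset,\I\}$; if instead $[cD^{\J}_i]=[aD^{\cA}_i]$ for two distinct $i$, multiply by $[a][p_{\cA}]$ and repeat with $\J\triangle\cA$; the only residual possibility is $|\I|=2$, excluded since $|\J|$ is even. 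Your approach first multiplies the relations $[cD^{\J}_i]=[aD^{\cA}_i]^{\epsilon_i}$ over all $i$ and invokes condition~(2) for $\cA$ to force all $\epsilon_i$ equal, then splits into cases. This is neat, and your treatment of $\G^D$ (with the sign from $\what{D}^{\J}_i$ eliminating $\J\in\{\cA,\cB\}$ via $[-1]\neq 1$) is more explicit than the paper's one-line ``similarly''. Note however that your appeal to condition~(2) for $\cA$ requires $\cA\neq\emptyset,\I$, which the lemma does not assume; the paper's casework for Condition~(D) avoids this by only ever applying~(2) to $\J$ and $\J\triangle\cA$.

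Your Brauer-group step, on the other hand, has a genuine gap. The description ``vertical classes are single quaternion symbols $(cp_{\J},ap_{\cA})$, with unramifiedness on $Y$ equivalent to $[c][p_{\J}]\in\G_D$, hence $\Br^{\verti}(Y,p)/\Br(k)\cong\G_D/\langle[a][p_{\cA}],[d][p_{\I}]\rangle$'' is not justified, and the claimed isomorphism is in fact false: for instance, when $\cA=\emptyset$ and $|\I|\geq 4$ one computes (via the paper's Proposition~\ref{p:vertical}, using $aD^{\cA}_i=a$ for all $i$) that $\Br^{\verti}/\Br(k)$ has $\FF_2$-dimension $|\I|-2>0$, whereas your right-hand side is trivial by Condition~(D). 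The paper's argument does not go through $\G_D$ at all: Proposition~\ref{p:vertical} shows that $\Br^{\verti}/\Br(k)$ is generated by the classes $\sum_{i}\epsilon_iA_i$ with $A_i=(aD^{\cA}_i,p_i)$, subject to $\prod_i[aD^{\cA}_i]^{\epsilon_i}=1$, modulo the single relation $\sum_iA_i=0$. Triviality is then immediate from condition~(2) applied to $\J=\cA$, which says the only relation among the $[aD^{\cA}_i]$ is the full product. You should replace your Brauer-group step with this direct appeal to Proposition~\ref{p:vertical}.
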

\begin{proof}
$(1) \Rightarrow (2)$.
Let $\J \subseteq \I$ be such that $\J \neq \emptyset,\I$ and $|\J|$ is even. Let $\eps_i \in \ZZ/2$ for $i \in \I$ be such that $\prod_{i\in \I} [aD^{\J}_i]^{\eps_i} = 1$. Then
$$ \prod_{i\in \I} [aD^{\J}_i]^{\eps_i} = \left[\prod_{i\in \J^c} [aD^{\J}_i]^{\eps_i}\right]\left[\prod_{j \in \J}([bD^{\J^c}_j])^{\eps_j}\right] = \left[\prod_{i \in \J^c} [a]^{\eps_i}\right]\left[\prod_{j \in \J} [b]^{\eps_j}\right]\left[\prod_{i \in \J^c, j \in \J} [\Del_{i,j}]^{\eps_i}[\Del_{j,i}]^{\eps_j}\right] = $$
$$ \left[\prod_{i \in \J^c} [a]^{\eps_i}\right]\left[\prod_{j \in \J} [b]^{\eps_j}\right]\left[\prod_{i \in \J^c, j \in \J} [-1]^{\eps_j}[\Del_{i,j}]^{\eps_i+\eps_j}\right] =\left[\prod_{i \in \J^c} [a]^{\eps_i}\right]\left[\prod_{j \in \J} [b]^{\eps_j}\right]\left[\prod_{i \in \J^c, j \in \J} [\Del_{i,j}]^{\eps_i+\eps_j}\right] $$
and so Condition (1) implies that $\eps_i + \eps_j = 0$ for every $i \in \J^c, j \in \J$. Since $\J \neq \emptyset,\I$ we may conclude that all the $\eps_i$'s are equal, and so Condition (2) holds.

$(2) \Rightarrow (3)$. Let us first show that condition (D) is satisfied. Let $x = [c][p_{\J}] \in \G$ be such that $|\J|$ is even. By (2) we know that if $\J \neq \emptyset,\I$, then the values $\{[aD^{\J}_i]\}_{i \in \I}$ are linearly independent, and so, in particular, the values $\{cD^{\J}_i\}_{i \in \I}$ are pairwise distinct. On the other hand, if $\J \in \{\emptyset,\I\}$ then $[cD^{\J}_i]$ is the same for all $i \in \I$. Now let $x = [c][p_{\J}]$ be such that that $|\J|$ is even and $[cD^{\J}_i] \in \{1,[aD^{\cA}_i]\}$ for every $i \in \I$. If $[cD^{\J}_i] = 1$ for two distinct $i$'s then by the above we have $\J \in \{\emptyset,\I\}$, in which case either $x = [1]$ or $x = [d][p_{\I}]$. On the other hand, if $[cD^{\J}_i] = [aD^{\cA}_i]$ for two distinct $i$'s then by Remark~\ref{r:mult} we may apply the above argument to $x[a][p_{\cA}]$ and deduce that $x = [a][p_{\cA}]$ or $x = [a][p_{\cA}]\cdot[d][p_{\I}]$. Hence the only case left to consider is the case where $[cD^{\J}_i] = 1$ for exactly one $i$ and $[cD^{\J}_i] = [aD^{\cA}_i]$ for exactly one $i$. In this case $\I$ must be of size $2$, and since $[cD^{\J}_i]$ is not constant we know by the above that $\J$ cannot be $\emptyset$ or $\I$. Fortunately, this case is excluded thanks to our assumption that $|\J|$ is even. We may hence conclude that Condition (D) is satisfied. It is left to show that $\Br^{\verti}(Y,p)$ contains only constant classes. But this follows from the explicit computation of the vertical Brauer group (see Proposition~\ref{p:vertical} below) since $\{[aD^{\cA}_i]\}_{i \in \I}$ spans a space of dimension $|\I| - 1$.
\end{proof}

Lemma~\ref{l:explicit} can be be used to offer the following explicit variant of Theorem~\ref{t:main-uncond}.
\begin{thm}\label{t:main-2}
Let $S_0$ be a finite set of places of $\QQ$ containing $\infty$ and $2$. Let $\I$ be a non-empty set of even size and for each $i \in \I$ let $c_i,d_i \in \ZZ_{S_0}$ be coprime elements such that $\Del_{i,j} = c_id_j - c_jd_i \neq 0$ whenever $i \neq j$. Let $a,b$ be non-zero $S_0$-integers such that Condition (1) of Lemma~\ref{l:explicit} holds. Let $\I = \cA \cup \cB$ be a partition into even sized subsets and let $\Y \lrar \PP^1_{S_0}$ be the pencil of affine conics given by~\ref{e:conic-2}. Assume that for every place $v \notin S_0$ such that either $|v| < |\I|$ or $v | d$, there exists a point $(t_v,s_v,x_v,y_v) \in \X(\OO_v)$ such that $\val_v(dp_{\I}(t_v,s_v)) \leq 1$. Then $\Y$ has an $S_0$-integral point.
\end{thm}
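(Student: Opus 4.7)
\medskip

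\noindent \textbf{Proof plan.} The approach is to reduce Theorem~\ref{t:main-2} to Theorem~\ref{t:main-uncond}. Since the standing hypothesis is precisely Condition~(1) of Lemma~\ref{l:explicit}, the chain of implications of that lemma yields Condition~(D) together with the vanishing $\Br^{\verti}(Y,p)/\Br(\QQ) = 0$. The latter forces every adelic point of $\X$ to be orthogonal to $\Br^{\verti}(Y,p)$, so Condition~(3) of Theorem~\ref{t:main-uncond} is automatic. Thus it remains to produce an $S_0$-integral adelic point of $\X$ satisfying Conditions~(1) and~(2) of Theorem~\ref{t:main-uncond}.

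\medskip

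\noindent For $v \notin S_0$ with $|v| < |\I|$ or $v \mid d$ the explicit hypothesis of the theorem directly supplies an $\OO_v$-point with the required valuation bound. For the remaining places $v \notin S_0$ one has $|v| \geq |\I|$, $v \nmid d$, and (since $2 \in S_0$) odd residue characteristic. The residue field $\kappa_v$ has $|v| \geq |\I|$ elements, so $|\PP^1(\kappa_v)| = |v|+1 > |\I|$ and pigeonhole yields $(\bar t, \bar s) \in \PP^1(\kappa_v)$ at which no $p_i$ vanishes. Then $A := ap_{\cA}(\bar t, \bar s)$ and $B := bp_{\cB}(\bar t, \bar s)$ are nonzero in $\kappa_v$, so the reduced conic $Ax^2+By^2 = 1$ has a $\kappa_v$-point (every nondegenerate binary diagonal form represents $1$ over a finite field), and Hensel's lemma lifts this to an $\OO_v$-point of $\X$ with $\val_v(dp_{\I}(t_v,s_v)) = 0$.

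\medskip

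\noindent The delicate step is the construction of $k_v$-points at $v \in S_0$ arranging the square condition of Condition~(2). Fix any $i \in \I$ with $d_i \neq 0$, and for each $v \in S_0$ consider $(t_v, s_v) = (1,\, -c_i/d_i + \delta_v)$. Then $p_i(t_v,s_v) = d_i\delta_v$ and $p_j(t_v,s_v) = \Del_{i,j}/d_i + d_j\delta_v$ for $j \neq i$, so once $\delta_v \in k_v^*$ has sufficiently small $v$-adic absolute value---invoking openness of $(k_v^*)^2$ in $k_v^*$ and the evenness of $|\I|$ to absorb the square factor $d_i^{|\I|-2}$---one obtains the congruence
\[ -d\,p_{\I}(t_v,s_v) \;\equiv\; -d\,\delta_v\prod_{j\neq i}\Del_{i,j} \pmod{(k_v^*)^2}. \]
Taking $\delta_v = -d\prod_{j \neq i}\Del_{i,j}\cdot u_v^2$ for $u_v \in k_v^*$ of sufficiently small absolute value therefore forces $-dp_{\I}(t_v,s_v) = w_v^2$ to be a nonzero square in $k_v$. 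Writing $A = ap_{\cA}(t_v,s_v)$ and $B = bp_{\cB}(t_v,s_v)$, the identity $AB = -w_v^2$ rewrites $Ax^2+By^2 = 1$ as $(Ax - w_v y)(Ax + w_v y) = A$, explicitly solved by $x = (A+1)/(2A),\, y = (A-1)/(2w_v)$. Carrying out this construction at every $v \in S_0$ and designating $v^1_\infty = \infty$, $v^2_\infty = 2$ produces an $S_0$-integral adelic point of $\X$ meeting Conditions~(1), (2), (3) of Theorem~\ref{t:main-uncond}, which then delivers the desired $S_0$-integral point of $\Y$. The principal obstacle is precisely the simultaneous square condition at two places of $S_0$, and the perturbation-near-a-root trick just sketched is what overcomes it.
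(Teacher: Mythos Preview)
Your proof is correct and follows essentially the same approach as the paper: reduce to Theorem~\ref{t:main-uncond} via Lemma~\ref{l:explicit}, handle the bad primes outside $S_0$ by hypothesis, the good ones by pigeonhole on $\PP^1(\kappa_v)$, and at $v\in S_0$ arrange $-dp_{\I}(t_v,s_v)$ to be a nonzero square so the fiber is soluble. The only difference is cosmetic: where the paper invokes the smooth hyperelliptic curve $z^2=-dp_{\I}(t,s)$ (which has $\QQ$-points at the roots of $p_{\I}$, hence infinitely many $\QQ_v$-points, hence one with $z\neq 0$) and then the Hilbert symbol $\langle A,B\rangle_v=0$, you carry out the same idea explicitly by perturbing near a root of $p_i$ and writing down a rational point on the conic by factoring $(Ax-w_vy)(Ax+w_vy)=A$.
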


\begin{proof}[Proof of Theorem~\ref{t:main-2} assuming Theorem~\ref{t:main-uncond}]
By Lemma~\ref{l:explicit} we know that Condition (D) holds. Let $\X = (\A^2_{S_0} \setminus \{(0,0)\}) \times_{\PP^1_{S_0}} \Y$. We shall now construct an adelic point $(P_v) = (t_v,s_v,x_v,y_v) \in \X(\A_{S_0})$ satisfying Conditions (1) and (2) of Theorem~\ref{t:main-uncond}. By Lemma~\ref{l:explicit} we have $\Br^{\verti}(Y,p)/\Br(k) = 0$ and hence condition (3) holds automatically.

Since $d \neq 0$ and the polynomial $p_{\I}(t,s)$ is separable the equation $z^2 = -dp_{\I}(t,s)$ defines a smooth hyperelliptic $C$ curve over $\QQ$. Since $p_{\I}(t,s)$ has roots in $\QQ$ this hyperelliptic curves has $\QQ$-points, and hence $\QQ_v$-points for every place $v$. It follows that $C(\QQ_v)$ is infinite for every $v$, and hence we can choose, for every $v \in S_0$, elements $t_v,s_v \in \QQ_v$ such that $-dp_{\I}(t_v,s_v)$ is a non-zero square. We then have that $\left<ap_{\cA}(t_v,s_v),bp_{\cB}(t_v,s_v)\right> = 0$ and hence the fiber $X_{(t_v,s_v)}$ has a $\QQ_v$-point. Choosing the $S_0$-components of our adelic point $(P_v)$ in this way, and using the fact that $S_0$ contains at least two places ($\infty$ and $2$), we may insure that $(P_v)$ will satisfy condition (2) of Theorem~\ref{t:main-uncond}.

Let us now choose the components of $(P_v)$ for $v \notin S_0$. If $|v| < |\I|$ or $v | d$ then our assumptions guarantee the existence of a suitable local point. Now let $v \notin S_0$ be such that $|v| \geq |\I|$ and $v$ does not divide $d$. Then $\PP^1(\FF_v)$ has more than $|\I|$ points and so there must exist a pair $t_v,s_v \in \OO_v$ such that $\val_v(ap_{\cA}(t_v,s_v))= \val_v(bp_{\cB}(t_v,s_v)) = 0$. Since $2 \in S_0$ we have that $v$ is odd and hence $\left<ap_{\cA}(t_v,s_v),bp_{\cB}(t_v,s_v)\right> = 0$ and the fiber $X_{(t_v,s_v)}$ has an $\OO_v$-point. Furthermore, by construction the $t_v,s_v$ coordinates of this point satisfy $\val_v(dp_{\I}(t_v,s_v)) = 0$.
\end{proof}

Finally, when $|\I| \leq 4$, an additional simplification is possible.
\begin{thm}\label{t:main-3}
Let $S_0$ be a finite set of places of $\QQ$ containing $\infty$ and $2$. Let $\I$ be a non-empty set of even size $\leq 4$ and for each $i \in \I$ let $c_i,d_i \in \ZZ_{S_0}$ be coprime elements such that $\Del_{i,j} = c_id_j - c_jd_i \neq 0$ whenever $i \neq j$. Let $a,b$ be \textbf{square-free} $S_0$-integers such that Condition (1) of Lemma~\ref{l:explicit} holds. Let $\I = \cA \cup \cB$ be a partition into even sized subsets and let $\Y \lrar \PP^1_{S_0}$ be the pencil of affine conics given by~\ref{e:conic-2}. Assume that $d$ is not divisible by any prime in $\{3,5\} \cap (\Om_k \setminus S_0)$. If $\Y$ contains an $\OO_v$-point for every $v \notin S_0$ which divides $d$ then $\Y$ contains an $S_0$-integral point.  
\end{thm}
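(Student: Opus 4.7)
The plan is to deduce Theorem~\ref{t:main-3} from Theorem~\ref{t:main-2} by verifying the latter's hypotheses. Condition~(1) of Lemma~\ref{l:explicit} is assumed in the statement, so the task reduces to producing, for every place $v \notin S_0$ with $|v| < |\I|$ or $v \mid d$, an $\OO_v$-point $(t_v,s_v,x_v,y_v) \in \X(\OO_v)$ with $\val_v(dp_\I(t_v,s_v)) \leq 1$. Since $2 \in S_0$ and $|\I| \leq 4$, the only place outside $S_0$ with $|v| < |\I|$ is $v=3$, and this only when $|\I|=4$. The hypothesis $3,5 \nmid d$ outside $S_0$ further ensures that every $v \mid d$ outside $S_0$ satisfies $|v| \geq 7 > |\I|$, so we have two genuinely different regimes to handle.

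For $v \mid d$ outside $S_0$, the square-freeness and coprimality of $a,b$ force $\val_v(d)=1$, reducing the valuation requirement to $\val_v(p_\I(t_v,s_v)) = 0$. The hypothesis of Theorem~\ref{t:main-3} supplies an $\OO_v$-point on $\Y$, which lifts to an $\OO_v$-point of $\X$ because $\Pic(\OO_v)=0$ and the projection $\X \to \Y$ is a $\GG_m$-torsor (Remark~\ref{r:X}). Since $|\PP^1(\FF_v)| \geq 8 > |\I|$, one can move the base point within $\PP^1(\OO_v)$ until all the $p_i(t_v',s_v')$ are units mod~$v$ while keeping the resulting fiber $\OO_v$-soluble: the combined conditions of avoiding the $\leq |\I|$ zeros of $p_\I$ in $\PP^1(\FF_v)$ and landing in the correct square class for solubility mod~$v$ cut out a nonempty set in $\PP^1(\FF_v)$. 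Hensel's lemma then produces the required $\OO_v$-point of $\X$.

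For $v = 3$ (when $|\I|=4$ and $3 \notin S_0$), the hypothesis $3 \nmid d$ makes $a$ and $b$ both units in $\OO_3$. If the four forms $p_i$ do not have four distinct roots in $\PP^1(\FF_3)$, there exists a point of $\PP^1(\FF_3)$ outside their combined zero locus; the fiber above it is a diagonal conic with unit coefficients, hence $\OO_3$-soluble, and yields $\val_3(p_\I)=0$. If the four reductions are instead distinct, they enumerate all four points of $\PP^1(\FF_3)$ and at each such reduced base exactly one $p_i$ vanishes to first order. One must show that at least one of the four resulting fibers is $\OO_3$-soluble: modulo~$3$ each degenerates to a single squared term, and the required square-class condition on the non-vanishing coefficient must hold for some root, which one checks by enumerating the four possible square classes and exploiting the square-freeness of $a,b$ together with Condition~(1) of Lemma~\ref{l:explicit}.

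The main obstacle is the subcase above in which the four linear forms exhaust $\PP^1(\FF_3)$: here there is no freedom to move the base point and one must directly rule out the simultaneous failure of all four candidate fibers. Once the local points with the valuation condition are assembled at every required $v$, a direct application of Theorem~\ref{t:main-2} yields the desired $S_0$-integral point on $\Y$.
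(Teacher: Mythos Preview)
Your overall plan matches the paper's: reduce Theorem~\ref{t:main-3} to Theorem~\ref{t:main-2} by producing, at each $v\notin S_0$ with $v\mid d$ or $v=3$, a local point with $\val_v(dp_\I(t_v,s_v))\leq 1$. The two regimes you identify are exactly those in the paper. However, two steps in your outline are not yet arguments.

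For $v\mid d$, the assertion that ``the combined conditions of avoiding the zeros of $p_\I$ and landing in the correct square class cut out a nonempty set'' needs a genuine count; merely having $|\PP^1(\FF_v)|>|\I|$ is insufficient, since the square-class condition can kill roughly half the remaining points. The paper handles this by a case analysis on the partition. Writing (say) $v\mid a$, the reduction of the equation becomes $bp_{\cB}(t,s)y^2=1$, and one studies the auxiliary curve $C:bp_{\cB}(t,s)=u^2$. When $\cB=\emptyset$ or $\cB=\I$ the argument is immediate from the hypothesis $\Y(\OO_v)\neq\emptyset$. When $|\cA|=|\cB|=2$ and $p_{\cB}$ is separable over $\FF_v$, $C$ is a smooth conic with $|v|+1>6$ points, from which one extracts a point with $p_\I\neq 0$; when $p_{\cB}$ is inseparable, $bp_{\cB}$ is a unit times a square everywhere and the hypothesis again finishes it. Your ``move the base point while keeping solubility'' does not survive as stated: moving in $\PP^1(\FF_v)$ can change the square class of $bp_{\cB}$, so you must do the counting.

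For $v=3$ in the hard subcase where the four roots exhaust $\PP^1(\FF_3)$, your proposed tools are the wrong ones. Neither the square-freeness of $a,b$ nor Condition~(1) of Lemma~\ref{l:explicit}---a statement about independence in $\QQ^*/(\QQ^*)^2$---says anything about square classes in $\FF_3$. The paper's argument is purely local: at the two roots of $p_{\cA}$ in $\PP^1(\FF_3)$, the values of $p_{\cB}$ exhaust $\FF_3^*$ (equivalently, the cross-ratio of the four points of $\PP^1(\FF_3)$ is $-1$), so for any unit $b$ one of those two values of $bp_{\cB}$ equals $1$. At that root one lifts, using separability of $p_{\cA}$, to obtain $\val_3 p_{\cA}=1$ and hence $\val_3(dp_\I)=1$, with $bp_{\cB}$ a square giving $\OO_3$-solubility of the fiber. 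Replace your appeal to global hypotheses with this explicit $\FF_3$ computation.
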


\begin{proof}[Proof of Theorem~\ref{t:main-3} assuming Theorem~\ref{t:main-2}]
Assume that $\Y$ has an $S_0$-integral adelic point. We need to show that the assumptions of Theorem~\ref{t:main-2} are satisfied. First let $v \notin S_0$ be a place such that $v | d$. Since $d = ab$ is square-free we have either $v | a$ or $v | b$. Without loss of generality we may assume that $v | a$ and $v \not{|} b$. Note that by our assumption $|v| > 5$. Now consider the (possibly singular) curve $C$ over $\FF_v$ given by the hyperellliptic equation $bp_{\cB}(t,s) = u^2$. Since $a$ reduces to $0$ mod $v$ we see that the $\FF_v$-points of $Y$ are in bijection with the subset of $\FF_v$-point of $C$ for which $u \neq 0$. We note that our assumptions imply that $Y$ contains $\FF_v$-points. Our goal now is to show that $Y$ contains an $\FF_v$-point $(t,s,x,y)$ such that $p_{\I}(t,s) \neq 0 \in \FF_v$. This, in turn, would imply that $\Y(\OO_v)$ contains a point $(t_v,s_v,x_v,y_v)$ such that $\val_vdp_{\I}(t,s) = 1$, since all $\FF_v$-point of $\Y$ are smooth, and the map $\Y \lrar \PP^1_v$ is smooth over $\OO_v$ (see Remark~\ref{r:smooth}). Let us consider the possible cases:
\begin{enumerate}[(1)]
\item
If $\cA = \emptyset$ and $\cB = \I$ then $p_{\I}(t,s) = p_{\B}(t,s)$ and since $v | a$ any $\FF_v$-point $(t,s,x,y)$ of $\Y$ will necessarily satisfy $p_{\B}(t,s) \neq 0$.
\item
If $\cA = \I$ and $\cB = \emptyset$ then $b$ must be a square at $\FF_v$ (since $\Y$ has an $\FF_v$-point) and $\Y_{(t:s)}(\FF_v) \neq \emptyset$ for any $(t:s) \in \PP^1(\FF_v)$. In this case we just need to show that there exist $t,s \in \FF_v$ such that $p_{\I}(t,s) \neq 0$, which follows from the fact that $|v| \geq |\I|$.
\item
If $\cA \neq \emptyset$ and $\cB \neq \emptyset$ then necessarily $|\I| = 4$ and $|\cA| = |\cB| = 2$. Let us consider two sub-cases. If $p_{\cB}$ is separable, then the curve $C$ is a smooth conic, and hence has at least $|v| + 1 > 6$ points. It then must have point $(t,s,u) \in C(\FF_v)$ such that $p_{\I}(t,s) \neq 0$ (and automatically $u \neq 0$). If $p_{\cB}$ is not separable then the value $bp_{\cB}(t,s)$ is either always a square or always a non-square. Since $\Y$ has an $\FF_v$-point it follows that $bp_{\cB}(t,s)$ is always a square, and so $\Y_{(t:s)}(\FF_v) \neq \emptyset$ for every $(t:s) \in \PP^1(\FF_v)$. In this case we just need to show that there exist $t,s \in \FF_v$ such that $p_{\I}(t,s) \neq 0$, which follows from the fact that $|v| \geq |\I|$.
\end{enumerate}
It is left to show that if $v \notin S_0$ is such that $|v| \leq |\I|$ then $\Y(\OO_v)$ contains a point $(t_v,s_v,x_v,y_v)$ such that $\val_vdp_{\I}(t,s) = 1$. Since $|\I| \leq 4$ and $2 \in S_0$ the only possible prime is $v = 3$, and we may assume that $|\I| = 4$. By our assumption $3$ does not divide $d$. Let us now consider two possible cases. If $p_{\I}$ does not reduce to a multiple of $ts(t-s)(t+s)$ mod $3$ then there exist $\ovl{t}_3,\ovl{s}_3 \in \FF_3$ such that $p_{\I}(\ovl{t}_3,\ovl{s}_3) \neq 0$. We may consequently find $t_3,s_3 \in \OO_3$ such that $\val_vdp_{\I}(t_3,s_3) = 0$, in which case $\left<ap_{\cA}(t,s),bp_{\cB}(t,s)\right> = 0$ and $\Y_{(t_3:s_3)}(\OO_v) \neq \emptyset$. Now assume that $p_{\I}$ does reduce to a multiple of $ts(t-s)(t+s)$. Checking all possibilities it is not hard to verify that in this case there must be $\ovl{t}_3,\ovl{s}_3 \in \FF_3$ such that, say, $p_{\cA}(\ovl{t}_3,\ovl{s}_3) = 0$ while $bp_{\B}(\ovl{t}_3,\ovl{s}_3) = 1$ (this is essentially because the cross-ratio of the four distinct points in $\PP^1(\FF_3)$ is $-1$). Since $p_{\cB}$ is separable (being a divisor of $p_{\I}$) we may find $t_3,s_3 \in \OO_3$, reducing to $\ovl{t}_3,\ovl{s}_3$ mod $3$, and such that $\val_3p_{\cB}(t_3,s_3) = 1$. We now have that $bp_{\cB}(t_3,s_3) \in (\OO^*_3)^2$ and hence $\Y_{(t_3:s_3)}(\OO_3) \neq \emptyset$.
\end{proof}

The following sections are dedicated to the proof of Theorem~\ref{t:main-uncond}.

\subsection{An outline of the proof}
Our goal in this section is to give an outline of the suggested adaptation of Swinnerton-Dyer's descent-fibration method in order to prove Theorem~\ref{t:main-uncond}. For technical reasons it will be convenient to work most of the time on $\X$, rather than $\Y$. We note of course that an $S_0$-integral point on $\X$ yields one on $\Y$. In fact, finding an $S_0$-integral point on $\X$ is an equivalent problem, see Remark~\ref{r:X}.

\begin{enumerate}[(1)]
\item
Given a set of places $T$ containing $S_0$ (as well as other places of special behavior), we will use the term \textbf{partial adelic point} over $T$ to mean a collection of local points of the form
$$ (P_v)_{v \in T} \in \prod_{v \in S_0} \X(\OO_v) \times \prod_{v \in T \bksl S_0} X(k_v) .$$
We will typically denote a partial adelic point indexed on $T$ by $P_T$. The first step in the proof consists in identifying a particular type of partial adelic points whose values at the bad primes is well-behaved (see Definition~\ref{d:suitable}), and then showing that, under the assumptions of Theorem~\ref{t:main-uncond} such suitable adelic points are in sufficient supply. 
\item
Given a suitable adelic point $P_T = (P_v)_{v \in T}$ as above, we begin by finding an $S_0$-integral point $(t_0,s_0)$ on $\A^2 \setminus \{(0,0)\}$ which is sufficiently close to the image of $P_v$ for $v \in T$ and such that the fiber $\X_{(t_0,s_0)} \cong \Y_{(t_0:s_0)}$ contains an $S_0$-integral \textbf{adelic point}. As in some of the other applications of the descent-fibration method, it is convenient to define the notion of a \textbf{$T$-admissible pair} to be a pair of $S$-integral coordinates $(t_0,s_0)$ satisfying the above conditions and such that for each $i \in \I$, the element $p_i(t_0,s_0)$ is prime in $\ZZ_S$, i.e., a product of a prime number and an $S$-unit. We will denote by $u_i$ the place corresponding to the unique prime outside $S$ dividing $p_i(t_0,s_0)$ (see Definition~\ref{d:admissible} for the precise formulation). 

In general, this step of the argument, which is taken up in \S\ref{s:fibration}, is very similar to classical applications of the fibration method for conic bundles, and requires almost no modification to accommodate the integral points setting. To obtain unconditional results, we replace Schinzel's hypothesis by a theorem of Green-Tao-Ziegler (see~\S\ref{ss:gtz}), which is applicable since we have assumed that the bad fibers of the pencil lie above rational points. We note that these issues are not especially related to integral points: the case of the fibration method where all the bad fibers are defined over $\QQ$ can be done unconditionally in the context of rational points as well, see~\cite[\S 9]{HW}.  
\item
Given a $T$-admissible pair $(t,s)$ we may now apply the machinery of \S\ref{ss:selmer} to study the fiber
$$ \X_{(t,s)}: ap_{\cA}(t,s)x^2 + bp_{\cB}(t,s)y^2 = 1 $$
as a torsor over the group scheme
$$ \T_{(t,s)}: x^2 + dp_{\I}(t,s)y^2 = 1 .$$
We denote $T(t,s) = T \cup \{u_i\}_{i \in I}$, and note that the fiber $\X_{(t,s)}$ has good reduction outside $T(t,s)$. Our goal is to be in a position to apply Corollary~\ref{c:final-step} to $\X_{(t,s)}$. For this, we need to know that the dual Selmer group of $\T_{(t,s)}$, as defined in \S\ref{ss:selmer}, is as small as possible, i.e., generated by the class $[-dp_{\I}(t,s)]$. 
\item
Having control over the dual Selmer group of $\T_{(t,s)}$ is a bit tricky in our case, since this group depends on certain features of $t$ and $s$ that cannot be predicted while generating $(t,s)$ using the fibration method together with the Green-Tao-Ziegler theorem. 
To accommodate for this issue, we consider suitable variants of the Selmer and dual Selmer groups, callerd the strict Selmer group and the weak dual Selmer group (see~\S\ref{ss:strict-selmer}). These variants are constructed in such a way that the weak dual Selmer group always contains the dual Selmer group (while the strict Selmer group is always contained in the Selmer group). The key property of interest these variants share is that when applied to $\T_{(t,s)}$ for a $T$-admissible pair $(t,s)$, they turn out to depend only on the associated partial adelic point $P_T$. 
 
To better exploit this observation it will be convenient to write the strict Selmer group and weak dual Selmer group of $\T_{(t,s)}$ in a way that does not depend explicitly on the $T$-admissible pair $(t,s)$. This is part of a general technique which appears in many applications of the descent-fibration method. In our current adaptation we do this by considering certain subgroups $J^T,J_T \subseteq \G$, which depends only on the choice of $T$. For any suitable partial adelic point $P_T$ and a $T$-admissible pair $(t,s)$, evaluation at $(t,s)$ induces isomorphisms $J_T \cong I_{T(t,s)}$ and $J^T \cong I^{T(t,s)}$. We may then identify the strict Selmer group as a subgroup $\Q_{P_T} \subseteq J_T$ and the weak dual Selmer group as a subgroup $\what{\Q}_{P_T} \subseteq J^T$. \S\ref{s:selmer-admis} is devoted to showing that these subgroups indeed depend only on $P_T$. Having established this, the goal of this step in our adaptation of the descent-fibration method can be phrase as follows: we wish to find a subset $T$ containing $S_0$ and a suitable partial adelic point $P_T$ such that the weak dual Selmer group $\what{\Q}_{P_T} \subseteq J^T$ is generated by the class $[-d][p_\I] \in J^T$.
\item
Starting from a given $T$ and $P_T = (P_v)_{v \in T}$ such that the associated weak dual Selmer group $\what{\Q}_{P_T} \subseteq J^T$ contains a non-zero element $x = [c][p_{\J}] \neq [-d][p_\I]$, we wish to find a place $w \notin T$ and a local point $P_w \in \X(\ZZ_w)$ such that the weak dual Selmer group associated to the suitable adelic point $P_{T_w} = (P_v)_{v \in (T \cup \{w\})}$ is contained in $\what{\Q}_{P_T}$ and does not contain $x$. We note that by Remark~\ref{r:ranks-2} the ranks of $\Q_{P_T}$ and $\what{\Q}_{P_T}$ are related by the formula
$$ \dim_2\Q_{P_T} = \dim_2\what{\Q}_{P_T} + S_{\Split} - 1$$ 
where $S_{\Split} \subseteq S_0$ denotes the subset of those places $v \in S_0$ for which $-dp_\I(t_v,s_v)$ is a square in $k_v$, and so reducing $\what{\Q}_{P_T}$ means reducing $\Q_{P_T}$ at the same time. It turns out to be convenient to make this link explicit. We recall that by our assumptions there are at least two places $v^1_\infty, v^2_\infty$ in $S_{\Split}$ and so in particular $\dim_2\Q_{P_T} > \dim_2\what{\Q}_{P_T}$. We may hence conclude that there exists a $x' = [c'][p_{\J'}] \in \Q_{P_T}$ such that $x'$ does not belong to the group generated by $[a][p_{\cA}]$ and $[d][p_{\I}]$. After possibly replacing $x$ by $[-d][p_\I]x$ and $x'$ by $[d][p_\I]x'$ we may assume that $\J \cup \J' \subsetneq \I$ and choose an element $i_x \in \I$ which is not contained in either $\J$ or $\J'$. To fix ideas assume $i \in \B$. Using Condition (D) above and Chebotarev's density theorem one can show that there exists a place $w$ and a pair $t_w,s_w \in \ZZ_w$ such that $p_{i_x}(t_w,s_w) \in \ZZ_w$ is a uniformizer, $ap_\cA(t_w,s_w) \in \ZZ_w^*$ is a square and $cp_\J(t_w,s_w), c'p_{\J'}(t_w,s_w) \in \ZZ_w^*$ are non-square units. 
This means in particular that $\X_{(t_w,s_w)}(\ZZ_w) \neq \emptyset$, and so we may extend our suitable partial adelic point $P_T$ to a suitable partial adelic point $P_{T_w} = (P_v)_{v \in T_w}$, where $T_w := T \cup \{w\}$. The condition that $cp_\J(t_w,s_w)$ is a non-square at $w$ implies that $x \notin \what{\Q}_{P_{T_w}}$. The heart of the proof, which is carried out in~\S\ref{s:compare}, consists in showing that 
$$ \what{\Q}_{P_{T_w}} \subseteq \what{\Q}_{P_T}, $$
and since $\what{\Q}_{P_{T_w}}$ does not contain $x$ it must be strictly smaller, as desired. This step uses in a crucial way the fact that $c'p_{\J'}(t_w,s_w) \in \ZZ_w^*$ is not a square, via a suitable arithmetic duality argument.
\end{enumerate}

\subsection{A Theorem of Green, Tao and Ziegler}\label{ss:gtz}
In a series of papers by Green-Tao and Green–-Tao-–Ziegler, the finite complexity case of the Hardy-Littlewood conjecture was proven (see~\cite[Corollary 1.9]{GT10} for an exact statement). The theorem below is a fairly straightforward consequence of that result, and will be used in this paper in place of Schinzel's hypothesis.
 
\begin{thm}[Green, Tao, Ziegler]\label{t:gtz}
Let $S$ be a finite set of places of $\QQ$ containing the real place. Let $r$ be a positive number and let $f_i(t,s)$ for $i=1,..,r$ be pairwise non-proportional linear forms with coefficients in $\ZZ_S$. Assume that for every $v \notin S$ there exists a pair $(t_v,s_v) \in \ZZ_v^2$ such that $\prod_if_i(t_v,s_v) \in \ZZ_v^*$.

Suppose that we are given pairs $(\lambda_v,\mu_v) \in \QQ_v^2$ for every $v \in S$. Then there exist infinitely many pairs $(\lambda,\mu)\in\ZZ_S^2$ such that
\begin{enumerate}[(1)]
\item
$(\lambda,\mu)$ is arbitrarily close to $(\lambda_v,\mu_v)$ in the $v$-adic topology
for $v\in S \setminus \{\infty\}$.
\item
$(\lambda:\mu)$ is arbitrarily close to $(\lambda_\infty:\mu_\infty)$ in the real topology (on $\PP^1(\RR)$) and $\lambda\lambda_v + \mu\mu_v > 0$.
\item
For $i=1,\ldots,r$ we have $f_i(\lam,\mu)=p_i u_i$ such that $u_i\in \ZZ_S^*$ and
$p_1,\ldots, p_r$ are distinct primes outside $S$.
\end{enumerate}
\end{thm}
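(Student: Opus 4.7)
The plan is to derive this from the finite--complexity Hardy--Littlewood theorem of Green--Tao--Ziegler, which is quoted as~\cite[Corollary 1.9]{GT10}. That result, applied to $d = 2$ variables and $r$ pairwise linearly independent integer forms, gives asymptotics for the number of $(\lambda,\mu) \in [N]^2$ with each $f_i(\lambda,\mu)$ prime, conditional on the non--vanishing of a singular series which is itself guaranteed by pointwise local solvability. So the task is essentially to package the given data (the $\QQ_v$--data for $v\in S$, the local solvability at $v\notin S$, and the $S$--integrality of the coefficients) into the shape required by GTZ.

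First I would clear denominators: choose a positive integer $D$ so that $Df_i$ has coefficients in $\ZZ$ for every $i$, and pass to working with $Df_i$ in place of $f_i$ (the conclusion about primeness ``up to an $S$--unit'' is unaffected). Next, pick an integer $N \geq 1$ which is a product of sufficiently high powers of the finite primes in $S$, chosen large enough that specifying $(\lambda,\mu)$ in a fixed residue class modulo $N$ forces $v$-adic closeness to $(\lambda_v,\mu_v)$ for each $v\in S\setminus\{\infty\}$ to the desired accuracy. Using the hypothesis that for every $v\notin S$ there exist $(t_v,s_v) \in \ZZ_v^2$ with $\prod_i f_i(t_v,s_v) \in \ZZ_v^*$, the singular series for the system $\{f_i\}$ is non--zero. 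A similar local computation at the finite primes of $S$ shows that we may choose the residue class $(a,b) \bmod N$ both compatibly with the prescribed $(\lambda_v,\mu_v)_{v \in S\setminus\{\infty\}}$ and in such a way that $f_i(a,b)$ is a unit mod every prime dividing $N$ (so any $(\lambda,\mu) \equiv (a,b) \bmod N$ has $f_i(\lambda,\mu)$ coprime to $N$, hence an $S$--unit times an integer supported outside $S$).

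Now apply the theorem of Green--Tao--Ziegler to the arithmetic progression $(a,b) + N\ZZ^2$ intersected with an open cone $C \subset \RR^2$ around the ray $\RR_{>0}\cdot(\lambda_\infty,\mu_\infty)$ (the cone can be taken small enough to enforce both real approximation of $(\lambda:\mu)$ to $(\lambda_\infty:\mu_\infty)$ and the positivity $\lambda\lambda_\infty + \mu\mu_\infty > 0$). Since the singular series does not vanish and the cone $C$ has positive density, one obtains an asymptotic formula, and in particular infinitely many pairs $(\lambda,\mu) \in \ZZ^2$ in this set with each $f_i(\lambda,\mu)$ prime. Writing $f_i(\lambda,\mu) = p_i u_i$ with $u_i \in \ZZ_S^*$ (absorbing the sign and the contribution of primes in $S$, which is controlled by the residue class mod $N$) gives the required form.

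The last step is to upgrade ``each $f_i(\lambda,\mu)$ prime'' to ``the $p_i$ pairwise distinct''. If $f_i(\lambda,\mu) = \pm f_j(\lambda,\mu)$ for some $i\neq j$, then $(\lambda,\mu)$ lies on the line defined by the non--zero linear form $f_i \mp f_j$ (non--zero because $f_i$ and $f_j$ are non--proportional), and only finitely many pairs in our progression with bounded height can lie on finitely many such lines; since we have infinitely many candidates of unbounded height, all but finitely many satisfy the distinctness condition, giving the theorem. The only non--trivial ingredient here is the Green--Tao--Ziegler theorem itself; the main obstacle in the adaptation is the careful bookkeeping required to translate between the $S$--integral, local data at $S$ and the arithmetic--progression framework in which GTZ is stated.
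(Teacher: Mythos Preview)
Your overall strategy is sound, but there is a genuine gap in the step where you claim that one can choose the residue class $(a,b) \bmod N$ \emph{both} approximating the prescribed $(\lambda_v,\mu_v)$ at the finite places of $S$ \emph{and} making every $f_i(a,b)$ a unit modulo every prime dividing $N$. These two requirements can conflict: nothing in the hypotheses prevents $f_i(\lambda_v,\mu_v)$ from having positive $v$-valuation (or from vanishing) at some finite $v\in S$, in which case any $(a,b)$ that is $v$-adically close to $(\lambda_v,\mu_v)$ will have $f_i(a,b)\equiv 0 \pmod v$. In that situation the values $Df_i(\lambda,\mu)$ carry a fixed nontrivial $S$-part and can never be literally prime, so GTZ applied to the forms $Df_i$ on the progression $(a,b)+N\ZZ^2$ yields nothing. (A related wrinkle: the $(\lambda_v,\mu_v)$ are only assumed to lie in $\QQ_v^2$, so they need not be approximable by points of $\ZZ^2$ at all.) The repair is not cosmetic: one must first extract the forced $S$-part of each $f_i(\lambda,\mu)$ and apply GTZ to the resulting modified affine-linear forms, which amounts to precisely the kind of normalisation you skipped.

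The paper handles this differently and more cleanly. It first reduces to $(\lambda_\infty,\mu_\infty)\in\QQ^2$, applies an invertible $\QQ$-linear change of variables $T$ sending $(\lambda_\infty,\mu_\infty)$ to $(1,0)$, and then \emph{enlarges} $S$ to a set $S'$ over which $T$ is $\ZZ_{S'}$-integral with unit determinant and each $f_i(\lambda_\infty,\mu_\infty)$ is an $S'$-unit. After dividing each transformed form by its value at $(1,0)$ one is left with forms $t - e_i s$ with $e_i\in\ZZ_{S'}$, and the real approximation point is $(1,0)$; the problem is then in the exact shape of \cite[Proposition 1.2]{HSW14}, which is the packaged consequence of GTZ the paper invokes. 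The enlargement of $S$ is the device that absorbs the ``fixed $S$-part'' issue your argument stumbles on, and the change of variables simultaneously handles the real cone and the possible non-integrality of the local data.
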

\begin{proof}
Since $\QQ^2$ is dense inside $\RR^2$ it is enough to prove the claim under the additional assumption that $t_\infty,s_\infty \in \QQ$. Let $T: \QQ^2 \lrar \QQ^2$ be an invertible linear transformation sending $(t_v,s_v)$ to $(1,0)$. Let $S'$ be a finite set of places containing $S$ such that all the coefficients of $T$ are $S'$-integers, the determinant of $T$ is an $S'$-unit and $f_i(t_\infty,s_\infty)$ is an $S'$-unit for every $i=1,...,r$. By our assumptions we may find, for each $v \in S' \setminus S$, a pair $t_v,s_v \in \ZZ_v$ such that $\prod_if_i(t_v,s_v) \in \ZZ_v^*$. Setting $(\lambda_v,\mu_v) = (t_v,s_v)$ for such $v \in S' \setminus S$ it will now suffice to prove the theorem for $S'$ instead of $S$. Since $T$ induces an invertible $\ZZ_{S'}$-linear transformation $\ZZ_{S'}^2 \lrar \ZZ_{S'}^2$ we may perform a variable change and replace each $(t_v,s_v)$ by $(t_v',s_v') = T(t_v,s_v)$ and each $f_i$ by $f_i' = f_i \circ T^{-1}$. In particular, we have $(t_\infty',s_\infty') = (1,0)$. Since $f_i'(1,0) = f_i(t_\infty,s_\infty)$ is an $S'$-unit we may furthermore divide each $f_i'$ by $f_i'(1,0)$, resulting in a linear form $t - e_is$ for some $S'$-integer $e_i$. We are now in a position to apply~\cite[Proposition 1.2]{HSW14}, itself a direct consequence of the Green-Tao-Ziegler theorem.
\end{proof}

\subsection{The vertical Brauer groups}
In this subsection we prove a few useful claims regarding the vertical Brauer groups of $X$ and $Y$. Our main results are Propositions~\ref{p:vertical} and~\ref{p:reduction} below. We begin with a few preliminaries.

\begin{define}
Let $S_{\bad} \subseteq \Om_\QQ \setminus S_0$ denote the set of places outside $S_0$ for which either $v | \Del_{i,j}$ for some $i\neq j$, or $v | d$, or $v$ is the prime $2$, or $\val_v(p_{\I}(t_v,s_v)) > 0$ for every $t_v,s_v \in \ZZ_v$.
\end{define}

\begin{lem}\label{l:fiber}
Let $v \notin S_0 \cup S_{\bad}$ be a place, $t_v,s_v \in \ZZ_v$ elements such that at least one of $t_v,s_v$ is a unit and $i \in \I$ an element such that $\val_v(p_i(t_v,s_v)) > 0$. Then the fiber $\X_{(t_v,s_v)}(\ZZ_v)$ is non-empty if and only if $aD^{\cA}_i$ is a square at $v$.
\end{lem}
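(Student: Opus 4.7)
The plan is to reduce the existence of a $\ZZ_v$-point on the affine conic to a congruence condition on a single unit, and then identify that unit with $aD^{\cA}_i$ up to squares.

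First I would observe that, because $v \notin S_0 \cup S_{\bad}$, the place $v$ is odd, $v \nmid d$, and $v \nmid \Del_{i,j}$ for every $i \neq j$. Since $\val_v(p_i(t_v,s_v)) > 0$ and $c_i,d_i$ are $v$-coprime, the reduction $(\bar t_v,\bar s_v) \in \PP^1(\FF_v)$ equals the class of $(d_i,-c_i)$; concretely one can write $(\bar t_v,\bar s_v) = \bar\lambda\,(d_i,-c_i)$ for some $\bar\lambda \in \FF_v^*$. For any $j \neq i$ we then have $p_j(t_v,s_v) \equiv \lambda\, p_j(d_i,-c_i) = \lambda\,\Del_{i,j} \pmod{v}$, which is a $v$-adic unit since $v \nmid \Del_{i,j}$. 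In particular, among the factors of $p_{\I}(t_v,s_v)$, only the factor $p_i$ has positive valuation at $v$.

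Next I would split into cases according to whether $i \in \cA$ or $i \in \cB$. Suppose $i \in \cA$; then $A := ap_{\cA}(t_v,s_v)$ has positive valuation (because $v \nmid a$ and only the factor $p_i$ contributes), while $B := bp_{\cB}(t_v,s_v)$ is a unit in $\ZZ_v$. Reducing the equation $Ax^2 + By^2 = 1$ modulo $v$ forces $\bar B \bar y^2 = 1$, so a $\ZZ_v$-solution exists iff $B$ is a square in $\ZZ_v^*$; the ``if'' direction follows by Hensel's lemma applied at $(x,y) = (0,\bar B^{-1/2})$, since $v$ is odd. The case $i \in \cB$ is entirely symmetric, with the roles of $A$ and $B$ exchanged.

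Finally I would match the square condition to $aD^{\cA}_i$. In the case $i \in \cA$, using $p_j(t_v,s_v) \equiv \lambda\Del_{i,j}$ and the fact that $|\cB|$ is even, $B$ is a square at $v$ iff $b\prod_{j \in \cB}\Del_{i,j}$ is. By Definition~\ref{d:D}, $D^{\cA}_i = d\,p_{\cB}(d_i,-c_i) = ab\prod_{j\in\cB}\Del_{i,j}$, so $aD^{\cA}_i = a^2 b \prod_{j\in\cB}\Del_{i,j}$, which is a square precisely when $B$ is. In the case $i \in \cB$, we have $D^{\cA}_i = \prod_{j \in \cA}\Del_{i,j}$, and the analogous computation shows that $A$ is a square at $v$ iff $aD^{\cA}_i$ is. I do not expect any real obstacle; the only thing to be careful about is keeping track of the parity of $|\cA|,|\cB|$ so that the scalar $\lambda$ drops out modulo squares.
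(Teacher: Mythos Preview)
Your proposal is correct and follows essentially the same route as the paper's proof: both arguments write $(\bar t_v,\bar s_v)=\bar\lambda(d_i,-c_i)$ for a unit $\lambda$, split into the cases $i\in\cA$ and $i\in\cB$, reduce the equation modulo $v$ to a square condition on the non-vanishing unit coefficient, and then identify that unit with $aD^{\cA}_i$ up to squares using that $|\cA|$ and $|\cB|$ are even. The only cosmetic difference is that you expand $D^{\cA}_i$ explicitly in terms of the $\Del_{i,j}$'s, whereas the paper works directly with $p_{\cB}(d_i,-c_i)$ and $p_{\cA}(d_i,-c_i)$; and for the ``if'' direction you invoke Hensel, though in fact once $B$ (resp.\ $A$) is a square unit you can simply take $x=0$, $y=1/\sqrt{B}$ (resp.\ $y=0$, $x=1/\sqrt{A}$).
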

\begin{proof}
By our assumptions on $t_v,s_v$ there exists a unit $u \in \ZZ_v^*$ such that $(t_v,s_v) = (ud_i,-uc_i)$ mod the maximal ideal $\m_{v} \subseteq \ZZ_v$. Let us first assume that $i \in \cA$. Then by Definition~\ref{d:D} we have 
$$ aD^{\cA}_i = bp_{\cB}(d_i,-c_i) $$ 
and so $bp_{\cB}(t_v,s_v) = |u|^{|\cB|}aD^{\cA}_i \in \ZZ_v^*$. On the other hand, since $i \in \cA$ we have $ap_{\cA}(t_v,s_v) =0$ mod $m_v$ and by Hensel's lemma we get that $\X(\ZZ_v) \neq 0$ if and only if $bp_{\cB}(t_v,s_v) \in \ZZ_v^*$ is a square. The desired result now follows from the fact that $|\cB|$ is even. The case where $i \notin \cA$ is completely analogous.
\end{proof}

\begin{define}\label{d:Ai}
For each $i \in \I$ let
$$ A_i \x{\df}{=} \left<aD^{\cA}_i,p_i(t,s)\right> \in \Br(k(X)) $$
be the corresponding Brauer element.
\end{define}

\begin{lem}
The class $A_i$ is Definition~\ref{d:Ai} belongs to $\Br(X) \subseteq \Br(k(X))$.
\end{lem}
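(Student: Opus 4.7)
The plan is to invoke Grothendieck's purity theorem for the Brauer group: since $X$ is smooth over $\QQ$ (being a $\GG_m$-torsor over the smooth variety $Y$; cf.\ Remark~\ref{r:smooth}), a class in $\Br(k(X))$ lies in $\Br(X)$ if and only if it has trivial residue at every codimension-one point of $X$. So I will compute the residues of $A_i = \langle aD^{\cA}_i, p_i(t,s)\rangle$ at each prime divisor of $X$ and check that they all vanish.

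For a quaternion symbol $\langle \alpha,\beta\rangle$ with $\alpha$ a unit at a discrete valuation $v$, the residue equals the class of $\alpha^{v(\beta)}$ in $\kappa(v)^*/(\kappa(v)^*)^2$. Here $aD^{\cA}_i \in \QQ^*$ is a constant, and so has trivial valuation everywhere; consequently the residue of $A_i$ can only be non-zero on divisors where $p_i(t,s)$ has odd valuation. Such divisors are precisely the irreducible components of the fiber $X_{(d_i:-c_i)}$ of $\pi:X \lrar \PP^1$ over the rational point $(d_i:-c_i)$, and the hypotheses $\Delta_{i,j}\neq 0$ together with the separability of $p_{\I}$ ensure that $p_i(t,s)$ has valuation exactly $1$ along each such component.

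The substantive step, and the one that forces the choice of the constant $aD^{\cA}_i$ in Definition~\ref{d:Ai}, is to verify that $[aD^{\cA}_i]$ becomes a square in the residue field of every component of $X_{(d_i:-c_i)}$. I will do this by inspection, in two cases. If $i \in \cA$, then $p_{\cA}(d_i,-c_i)=0$ and the fiber equation~\ref{e:conic-2} collapses to $bp_{\cB}(d_i,-c_i)y^2 = 1$; since by Definition~\ref{d:D} one has $aD^{\cA}_i = a\cdot dp_{\cB}(d_i,-c_i) = a^2 b p_{\cB}(d_i,-c_i)$, i.e.\ $aD^{\cA}_i \equiv bp_{\cB}(d_i,-c_i)$ modulo squares, the image of $aD^{\cA}_i$ in the residue field of any component of the fiber equals $y^{-2}$, which is a square. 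If instead $i \in \cB$, then $p_{\cB}(d_i,-c_i)=0$, the fiber equation reduces to $ap_{\cA}(d_i,-c_i)x^2 = 1$, and $aD^{\cA}_i = ap_{\cA}(d_i,-c_i)$ specializes to $x^{-2}$ in the residue field. In either case the residue vanishes, and purity yields $A_i \in \Br(X)$. I do not expect any serious obstacle here beyond this bookkeeping: the symbol $A_i$ is engineered so that $aD^{\cA}_i$ matches the non-vanishing coefficient on the bad fiber, forcing the ramification to be killed by the tautological relation coming from the conic equation itself.
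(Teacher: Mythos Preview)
Your proof is correct and follows essentially the same approach as the paper's: both invoke purity, observe that the constant $aD^{\cA}_i$ forces all residues to concentrate on the vanishing locus $Z_i = \{p_i = 0\}$, and then use the restriction of the defining equation~\ref{e:conic-2} to $Z_i$ to see that $aD^{\cA}_i$ (which equals $bp_{\cB}(d_i,-c_i)$ up to squares when $i\in\cA$, and $ap_{\cA}(d_i,-c_i)$ when $i\in\cB$) becomes a square in the residue field. Your write-up is slightly more explicit about the $\GG_m$-torsor structure and the case split, but the argument is the same.
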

\begin{proof}
Let $Z_i \subseteq X$ denote the vanishing locus of $p_i$. Since $aD^{\cA}_i$ is a constant we see that $A_i$ is unramified outside $Z_i$ and that its residue at $D$ is given by the image of the class $[aD^{\cA}_i] \in H^1(k,\ZZ/2)$ in $H^1(Z_i,\ZZ/2)$. Recall that $X$ is determined inside $(\A_k^2 \setminus \{(0,0)\}) \times \A^2_k$ by the equation
$$ ap_{\cA}(t,s)x^2 + bp_{\cB}(t,s)y^2 = 1 $$
If $i \in \cB$ then $p_{\cB}$ vanishes on $Z_i$ and $ap_{\cA}|_{Z_i}$ is a square in $k[Z_i]$. On the other hand, in this case $aD^{\cA}_i = ap_{\cA}(d_i,-c_i)$ (see Definition~\ref{d:D}) coincides with the value of $ap_{\cA}$ on a point of $Z_i$ hence $[aD^{\cA}_i]$ vanishes in $H^1(Z_i,\ZZ/2)$. Similarly, if $i \in \cA$ then $bp_{\cB}|_{Z_i}$ is a square in $k[Z_i]$ and $aD^{\cA}_i = a^bp_{\cB}(d_i,-c_i)$ by Definition~\ref{d:D}, and so again $[aD^{\cA}_i]$ vanishes in $H^1(Z_i,\ZZ/2)$.
\end{proof}

\begin{lem}\label{l:good}
Let $v \notin S_0 \cup S_{\bad}$ be a place of $\QQ$. Then for each $i \in \I$ the Brauer element $A_i$ pairs trivially with every point of $\X(\ZZ_v)$.
\end{lem}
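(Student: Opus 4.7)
The plan is to evaluate the Brauer class $A_i$ on a $\ZZ_v$-integral point $P_v = (t_v, s_v, x_v, y_v) \in \X(\ZZ_v)$ and show that the resulting Hilbert symbol $\langle aD^{\cA}_i, p_i(t_v, s_v)\rangle_v \in \Br(\QQ_v)[2]$ vanishes. The first preliminary observation I would record is that $aD^{\cA}_i$ is a $v$-adic unit. Unraveling Definition~\ref{d:D}: if $i \in \cB$ then $aD^{\cA}_i = a\prod_{j \in \cA}\Del_{j,i}$, and if $i \in \cA$ then $aD^{\cA}_i = a^2 b \prod_{j \in \cB}\Del_{j,i}$. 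Since $v \notin S_0 \cup S_{\bad}$, the place $v$ divides neither $d = ab$ nor any $\Del_{j,i}$ with $j \neq i$, so in both cases $aD^{\cA}_i \in \ZZ_v^*$. I would also note that $v$ is odd (as $2 \in S_{\bad}$).

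The second preliminary fact I would use is that, by the analogue of Remark~\ref{r:X} for $\OO_v$-points, the coordinates $t_v, s_v$ are coprime in $\OO_v$, so at least one of them is a $v$-adic unit. This puts us in the setting of Lemma~\ref{l:fiber}.

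The proof then splits into two cases according to the value of $\val_v(p_i(t_v, s_v))$. If $\val_v(p_i(t_v, s_v)) = 0$, then both entries of the Hilbert symbol are $v$-adic units at an odd prime, so $\langle aD^{\cA}_i, p_i(t_v, s_v)\rangle_v = 0$ by the standard formula for the tame symbol. If $\val_v(p_i(t_v, s_v)) > 0$, then the hypotheses of Lemma~\ref{l:fiber} are satisfied, and the existence of $P_v$ itself witnesses that $\X_{(t_v, s_v)}(\ZZ_v)$ is non-empty. Lemma~\ref{l:fiber} therefore forces $aD^{\cA}_i$ to be a square in $\ZZ_v^*$, and again the Hilbert symbol vanishes.

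The only mildly delicate point is the unit check for $aD^{\cA}_i$ together with the verification that $v$ is odd; everything else is a routine application of Lemma~\ref{l:fiber} combined with the elementary behavior of the tame Hilbert symbol, so I do not expect a substantial obstacle.
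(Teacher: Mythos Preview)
Your proposal is correct and follows essentially the same approach as the paper's own proof: both arguments reduce to observing that $aD^{\cA}_i$ is a $v$-adic unit at an odd place, so the Hilbert symbol can only be nontrivial when $p_i(t_v,s_v)$ has positive (odd) valuation, and then Lemma~\ref{l:fiber} rules this out because $\X_{(t_v,s_v)}(\ZZ_v)$ is nonempty. Your version is simply more explicit about the unit check and the coprimality of $t_v,s_v$.
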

\begin{proof}
Let $v \notin S_0 \cup S_{\bad}$ be a place and $P_v = (t_v,s_v,x_v,y_v) \in \X(\ZZ_v)$ a local point. Since the element $aD^{\cA}_i$ is a $v$-unit we see that 
$A_i(P_v)$ is non-trivial if and only if $aD^{\cA}_i$ is a non-square and $p_i(t,s)$ has an odd (and hence positive) valuation. But this is impossible in light of Lemma~\ref{l:fiber} once the fiber $\X_{(t_v,s_v)}$ has an $\ZZ_v$-point.
\end{proof}

We are now ready to prove two key results concerning the vertical Brauer groups of $X$ and $Y$.
\begin{prop}\label{p:vertical}\
\begin{enumerate}[(1)]
\item
The natural map $\Br^{\verti}(Y,p) \lrar \Br^{\verti}(X,\pi)$ is an isomorphism.
\item
The group $\Br^{\verti}(X,\pi)/\Br(k)$ is generated by the classes of the elements $\sum_{i\in \I} \eps_iA_i$ such that $\prod_{i\in \I} \left(aD^{\cA}_i\right)^{\eps_i}$ is a square.
\item
The class of $\sum_{i\in \I} A_i$ in $\Br^{\verti}(X,\pi)/\Br(k)$ is trivial.
\end{enumerate}
\end{prop}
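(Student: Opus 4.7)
For part (1), the map $q: \X \to \Y$ is a $\GG_m$-torsor (obtained by base change from the tautological torsor $\A^2 \setminus \{0\} \to \PP^1$), so $q^*: \Br(\Y) \hookrightarrow \Br(\X)$ is injective. Both vertical Brauer groups arise as preimages inside $\Br(k(\PP^1))$ of $\Br(\Y)$ and $\Br(\X)$ under $p^*$ and $\pi^* = q^*p^*$ respectively. A class $\beta \in \Br(k(\PP^1))$ has trivial residues on $\Y$ above each codimension-one point of $\PP^1$ if and only if the same holds on $\X$: residues are computed compatibly by tame symbols, and passing to $\X$ adjoins a pure transcendental to each fiber's function field, which does not alter the square class of any residue element. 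Hence the two preimages coincide.

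For part (2), I would apply the Faddeev exact sequence on $\PP^1$ to determine which residue tuples correspond to classes in $\Br^{\verti}(X,\pi)/\Br(k)$. For $\beta \in \Br(k(\PP^1))$ to pull back to $\Br(X)$, the residue $\partial_P(\beta) \in H^1(k(P), \ZZ/2)$ must become trivial in the function field of $X_P$ at each closed point $P$. Only the points $\{P_i\}_{i\in\I}$ contribute nontrivial constraints; at $P_k$ the explicit form of Definition~\ref{d:D} shows that the special fiber has function field containing $\sqrt{aD^{\cA}_k}$, so the allowed residue at $P_k$ is an $\FF_2$-multiple of $[aD^{\cA}_k]$. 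Since $A_k$ has residue exactly $[aD^{\cA}_k]$ at $P_k$ and is unramified at every other $P_j$, any such residue tuple is realized by a sum $\sum_k \eps_k A_k$, subject only to the global reciprocity constraint $\sum_k \eps_k[aD^{\cA}_k] = 0 \in H^1(k, \ZZ/2)$, i.e.\ $\prod_k (aD^{\cA}_k)^{\eps_k}$ is a square.

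For part (3), I would exploit the defining equation $ap_{\cA}x^2 + bp_{\cB}y^2 = 1$ of $X$, which exhibits $ap_{\cA}$ as a norm from $k(X)(\sqrt{bp_{\cB}})$ and so gives $\langle ap_{\cA}, bp_{\cB}\rangle = 0$ in $\Br(k(X))$. Substituting the mod-square identities $aD^{\cA}_i \equiv b\prod_{j\in\cB}\Del_{i,j}$ for $i \in \cA$ and $aD^{\cA}_j \equiv a\prod_{i\in\cA}\Del_{i,j}$ for $j \in \cB$ (which follow from Definition~\ref{d:D}, the identity $p_i(d_j,-c_j) = -\Del_{i,j}$, and the evenness of $|\cA|, |\cB|$), expanding and using the above norm relation lets me rewrite
\[
\sum_{i\in\I}A_i \equiv \langle a,b\rangle + \sum_{(i,j)\in\cA\times\cB}\bigl(\langle p_i, p_j\rangle + \langle \Del_{i,j}, p_ip_j\rangle\bigr) \pmod{\Br(k)}.
\]
A direct tame-symbol computation on $\PP^1$ (after trivializing the $p_i$ as rational functions via division by a coordinate) then shows that each pair-summand differs from $\langle -c_i, p_i\rangle + \langle c_j, p_j\rangle$ by a constant. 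Summing yields $|\cB|\sum_{i\in\cA}\langle -c_i, p_i\rangle + |\cA|\sum_{j\in\cB}\langle c_j, p_j\rangle$, which vanishes mod $2$ by the parity of $|\cA|$ and $|\cB|$, so $\sum_i A_i \in \Br(k)$ inside $\Br(X)/\Br(k)$.

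The main obstacle is the residue identity in part (3): verifying $\langle p_i, p_j\rangle + \langle \Del_{i,j}, p_ip_j\rangle \equiv \langle -c_i, p_i\rangle + \langle c_j, p_j\rangle \pmod{\Br(k)}$ requires carefully tracking tame symbols at $P_i$, $P_j$, and at the chosen point at infinity, with the leading coefficients $-c_i, c_j$ emerging as pole contributions. The final cancellation relies essentially on $|\cA|, |\cB|$ being even, a parity assumption that is built into the setup.
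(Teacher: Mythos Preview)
Your arguments for parts (1) and (2) are essentially the paper's: the $\GG_m$-torsor structure of $q$ gives injectivity and the residue comparison gives surjectivity, and the computation of $\Br^{\verti}$ via Faddeev and the splitting fields $\QQ(\sqrt{aD^{\cA}_i})$ of the degenerate fibers is exactly what the paper does (working on the projective conic bundle $\ovl{Y}$).

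For part (3) you share the paper's key ingredient---the defining equation forces $\pi^*\langle ap_{\cA}, bp_{\cB}\rangle = 0$ in $\Br(k(X))$---but you then take a substantially longer computational route. You expand $\sum_i A_i$ using the explicit formulas for $aD^{\cA}_i$, reduce to the pairwise summands $\langle p_i,p_j\rangle + \langle \Del_{i,j}, p_ip_j\rangle$, and handle those by a tame-symbol calculation at $P_i$, $P_j$, and infinity (which indeed gives $\langle -c_i,p_i\rangle + \langle c_j,p_j\rangle$ up to a constant, and then cancels upon summing by the parity of $|\cA|,|\cB|$). This works, with the caveat that the leading-coefficient argument implicitly assumes $c_i \neq 0$; when some $c_i=0$ you would need to change the affine coordinate on $\PP^1$ first. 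The paper bypasses all of this: it simply observes that $\sum_i \langle aD^{\cA}_i, p_i(t/s,1)\rangle$ and $\langle ap_{\cA}(t/s,1), bp_{\cB}(t/s,1)\rangle$ have the \emph{same} residue $[aD^{\cA}_i]$ at each $P_i$ and are unramified elsewhere, so by Faddeev they differ by an element of $\Br(k)$. Your expansion is a valid hands-on verification of this residue equality, but the paper's two-line argument avoids the bookkeeping entirely.
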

\begin{proof}
Let us begin with Claim (1). Let $A$ be a class in $\Br^{\verti}(X)$ and let $A' \in \Br(k(\PP^1))$ be a class such that $\pi^*A' = A$. Since $q:  X \lrar Y$ is a $\GG_m$-torsor all its fibers are geometrically irreducible and hence for every codimension $1$ point $P \in Y$ the map $q^*:H^1(k(P),\QQ/\ZZ) \lrar H^1(k(X_P),\QQ/\ZZ)$ is injective. It follows that $p^*A' \in \Br(k(Y))$ must be unramified along any codimension 1 point $P \in Y$. In particular, $p^*A' \in \Br^{\verti}(Y)$ and maps to $A$ in $\Br^{\verti}(X)$, which means that the map $\Br^{\verti}(Y,p) \lrar \Br^{\verti}(X,\pi)$ is surjective. To show that it is also injective consider the generic point $Q \in Y$. Since $X$ is a $\GG_m$-torsor over $Y$ it follows that the generic fiber $X_Q$ has a point defined over $k(Q)$. This implies that the map $\Br(k(Q)) \lrar \Br(X_Q)$ is injective and hence the map $\Br^{\verti}(Y,p) \lrar \Br^{\verti}(X,\pi)$ is injective as well. 

Let us now prove Claim (2). 
Recall the projective conic bundle $\ovl{p}: \ovl{Y} \lrar \PP^1_k$. Since elements in $\pi^*\Br(k(\PP^1_k))$ cannot ramify along $\ovl{Y} \setminus Y$ it follows that the map $\Br^{\verti}(\ovl{Y},\ovl{p}) \lrar \Br^{\verti}(Y,p)$ is an isomorphism. Since $\ovl{p}:\ovl{Y} \lrar \PP^1_k$ is a conic bundle the computation of $\Br^{\verti}(Y,p)$ is standard (see~\cite[\S 7]{Sko99}). For each $i \in \I$ let $P_i \in \PP^1$ be the closed point given by $p_i = 0$. Then the bad fibers of $\ovl{Y}$ are exactly the $P_i$'s, and for each $P_i$ the fiber $\ovl{Y}_{P_i}$ splits in the quadratic extension $\QQ\left(\sqrt{aD^{\cA}_i}\right)$ (but not in $\QQ$). It follows that if $B \in \Br(k(\PP^1))$ is a Brauer element then $p_{\ovl{Y}}^*(B) \in \Br(k(\ovl{Y}))$ is unramified along $\ovl{Y}$ if and only if the element $B$ is unramified outside $\{P_i\}_{i \in \I}$ and for each $i$ we have 
$$ \res(B)|_{P_i} = \eps_i[aD^{\cA}_i] \in H^1(\QQ,\ZZ/2) $$ 
with $\eps_i \in \{0,1\}$. By Faddeev's exact sequence such elements exist if and only if
$\prod_i \left(aD^{\cA}_i\right)^{\eps_i}$ is a square. Furthermore, if $\{\eps_i\}_{i \in \I}$ satisfies this condition then it is straightforward to check that the Brauer element
$$ B = \sum_i \eps_i \left<aD^{\cA}_i,p_i\left(t/s,1\right)\right> \in \Br(k(\PP^1)) $$
has the desired residues. Claim (2) now follows directly.

It is left to prove Claim (3). By comparing residues and using Fadeev's exact sequence we may deduce the existence of an element $B \in \Br(k)$ such that
$$ \sum_i \left<aD^{\cA}_i,p_i(t/s,1)\right> = \left<ap_{\cA}(t/s,1),bp_{\cB}(t/s,1)\right> + B \in \Br(k(\PP^1)) .$$
Since $\pi^*\left<ap_{\cA}(t/s,1),bp_{\cB}(t/s,1)\right>  = 0 \in \Br(k(X))$ by equation~\ref{e:conic-2} the desired result follows.
\end{proof}

Let $T$ be a finite set of places containing $S_0$. By a \textbf{partial adelic point} $(P_v)_{v \in T} = (t_v,s_v,x_v,y_v)_{v \in T}$ on $\X$ we will mean a point in the product
$$ (P_v)_{v \in T} \in \prod_{v \in S_0} X(\QQ_v) \times \prod_{v \in T \setminus S_0} \X(\ZZ_v) .$$
We will usually denote a partial adelic point indexed on $T$ by $P_T$.

\begin{prop}\label{p:reduction}
Let $P_T = (t_v,s_v,x_v,y_v)_{v \in T}$ be a partial adelic point on $\X$ which is orthogonal to all Brauer elements of the form $B = \sum_i n_iA_i$ such that $B \in \Br^{\verti}(X,\pi)$. Then there exists an element $q \in \QQ^*$ which is a unit at every place $v \in T$ and such that the partial adelic point $(qt_v,qs_v,q^{-n}x_v,q^{-m}y_v)$ is orthogonal to $A_i$ for every $i \in \I$.
\end{prop}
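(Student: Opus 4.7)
The plan is to study how the Brauer pairings with the classes $A_i$ transform under the $\GG_m$-action $(t,s,x,y)\mapsto (qt,qs,q^{-n}x,q^{-m}y)$ on $\X$, and then produce a suitable $q$ via Chebotarev's density theorem. Since each $p_i$ is a linear form, $p_i(qt_v,qs_v)=q\,p_i(t_v,s_v)$, so bilinearity of the Hilbert symbol yields
\[
\inv_v A_i(qP_v) \;=\; \inv_v A_i(P_v) \;+\; \inv_v\bigl\langle aD^{\cA}_i,\,q\bigr\rangle_v.
\]
Writing $\alpha_i := \sum_{v\in T}\inv_v A_i(P_v) \in \FF_2$, the task becomes: find $q\in\QQ^*$ which is a unit at every $v\in T$ such that $\sum_{v\in T}\inv_v\langle aD^{\cA}_i,q\rangle_v = \alpha_i$ for every $i\in\I$.

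Next I would unpack the orthogonality hypothesis via Proposition~\ref{p:vertical}(2), according to which a combination $\sum_i n_iA_i$ lies in $\Br^{\verti}(X,\pi)$ precisely when $\prod_i(aD^{\cA}_i)^{n_i}\in(\QQ^*)^2$. The orthogonality of $P_T$ thus gives $\sum_i n_i\alpha_i=0$ whenever $\prod_i(aD^{\cA}_i)^{n_i}$ is a square, and this is exactly the statement that the assignment $aD^{\cA}_i\mapsto\alpha_i$ descends to a well-defined character $\psi\colon V\to\FF_2$, where $V\subseteq\QQ^*/(\QQ^*)^2$ denotes the finite subgroup generated by $\{aD^{\cA}_i\}_{i\in\I}$. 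By global Hilbert reciprocity the required identity may equivalently be rewritten as $\sum_{v\notin T}\inv_v\langle c,q\rangle_v=\psi(c)$ for every $c\in V$, trading a constraint on $T$ for one on its complement.

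To produce such a $q$, let $L=\QQ\bigl(\sqrt{c}:c\in V\bigr)$, so that Kummer theory provides a canonical isomorphism $\Gal(L/\QQ)\cong V^\vee$ under which $\Frob_w$, for $w$ unramified in $L$, corresponds to the character $c\mapsto [c \text{ is a non-square mod } w]$. I would apply Chebotarev to the compositum $L'$ of $L$ with the finitely many auxiliary quadratic fields encoding the local squareness conditions needed at the ``bad'' places outside $T$ (the prime $v=2$ if $2\notin T$, and each rational prime $v\notin T$ dividing a representative of some $c\in V$), and select a rational prime $w\notin T$ whose Frobenius in $\Gal(L/\QQ)$ realises $\psi$ and which is moreover a square in $\QQ_v^*$ at each of those bad places. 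Setting $q:=w$, every Hilbert symbol $\inv_v\langle c,q\rangle_v$ with $v\notin T$ and $v\neq w$ vanishes: at good odd $v$ because both arguments are units, at bad finite $v$ by the imposed squareness of $w$, and at $\infty$ because $w>0$. Only $\inv_w\langle c,w\rangle_w=\psi(c)$ survives, which is exactly the identity required.

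The main obstacle lies in the Chebotarev step: one has to package all the auxiliary multiquadratic extensions into the single field $L'$ and check, using their linear disjointness from $L$, that a Frobenius class simultaneously realising $\psi\in\Gal(L/\QQ)$ together with the extra squareness conditions does exist and has positive density. The other steps are essentially a direct computation on the $\GG_m$-torsor structure combined with Proposition~\ref{p:vertical}(2).
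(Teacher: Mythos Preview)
Your approach is essentially the same as the paper's: compute how $A_i$ transforms under the $\GG_m$-action, reinterpret the hypothesis via Proposition~\ref{p:vertical}(2) as a linear constraint on the $\alpha_i$, find a prime $w$ by Chebotarev realising the prescribed Legendre symbols, and set $q=w$. The paper carries out exactly these steps, only more tersely: it writes $b_i$ for your $\alpha_i$, applies Chebotarev directly to find $w\notin T$ with $aD^{\cA}_i$ a square at $w$ if and only if $b_i=0$, and then invokes quadratic reciprocity in one line to get $\sum_{v\in T}\langle aD^{\cA}_i,q\rangle_v=\langle aD^{\cA}_i,q\rangle_w=b_i$.

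The one place where your write-up diverges is the extra care you take at ``bad'' places $v\notin T$ (the prime $2$ and primes dividing some $aD^{\cA}_i$), where you worry that $\langle aD^{\cA}_i,q\rangle_v$ might not vanish and propose auxiliary squareness conditions to kill these terms. This concern is legitimate for the proposition as literally stated, and you are right that packaging such conditions into a single Chebotarev step requires a compatibility check that is not automatic. However, in the paper's setting this obstacle is vacuous: the proposition is only ever applied with $T\supseteq S\supseteq S_{\bad}\cup\{2\}$, and one checks from Definition~\ref{d:D} that each $aD^{\cA}_i$ is a product of $a$, $d$ and various $\Del_{i,j}$, hence an $S$-unit. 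So there are no bad places outside $T$, no auxiliary extensions are needed, and the reciprocity step goes through directly as the paper (implicitly) uses it. Your proof is correct once this is observed; the ``main obstacle'' you flag simply does not arise.
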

\begin{proof}
For each $i \in \I$ let us set
$$ b_i = \sum_{v \in T} A_i(P_v) = \sum_{v \in T} \left<aD^{\cA}_i,p_i(t_v,s_v)\right>_v \in \ZZ/2 .$$
By our assumptions and Proposition~\ref{p:vertical} we know that whenever $\{n_i\}_{i \in \I}$ are such that $\prod_i \left(aD^{\cA}_i\right)^{n_i}$ is a square, the corresponding sum $\sum_i n_ib_i \in \ZZ/2$ vanishes. By Chebotarev's theorem there exists a place $w \notin T$ such that $aD^{\cA}_i$ is a square at $w$ if and only if $b_i = 0$. Let $q$ be a prime which is a uniformizer at $w$ (and so, in particular, $q$ is a unit at every place of $T$). By quadratic reciprocity we then have
$$ \sum_{v \in T} \left<aD^{\cA}_i,q\right>_v = \left<aD^{\cA}_i,q\right>_w = b_i $$
and hence by construction the partial adelic point $(qt_v,qs_v,q^{-n}x_v,q^{-m}y_v)_{v \in T}$ is orthogonal to $A_i$ for every $i \in \I$.
\end{proof}

\subsection{Suitable adelic points}
\begin{const}\label{c:S_D}
For each $x = [c][p_{\J}] \in \G_i \setminus \G_D$, let us fix an $i \neq i_x \in \I$ such that $\left[cD^{\J}_{i_x}\right] \notin \left\{1,\left[aD^{\cA}_{i_x}\right]\right\}$. By Chebotarev's theorem we may then find a place $v_x \notin S_0 \cup S_{\bad}$ such that $aD^{\cA}_{i_x}$ is a square at $v_x$ and yet $cD^{\J}_{i_x}$ is a non-square at $v_x$. Let $t_{x},s_{x} \in \ZZ_{v_x}$ be such that $p_{i_x}(t_{x},s_{x})$ is a uniformizer. By Lemma~\ref{l:fiber} the fiber $\X_{(t_{x},s_{x})}$ contains an $\ZZ_{v_x}$-point. Choosing for each $i$ and each $x \in \G_i \setminus \G_D$ a place $v_x$ as above we obtain a finite set of places which we denote $S_D$. 
\end{const}

Let us now fix our set of distinguished primes to be $S = S_0 \cup S_{\bad} \cup S_D$.

\begin{define}\label{d:suitable}
Let $T$ be a finite set of places containing $S$. We will say that a partial adelic point $P_T = (P_v)_{v \in T}$ is \textbf{suitable} if the following conditions hold
\begin{enumerate}[(1)]
\item
$dp_{\I}(t_v,s_v) \neq 0$ for every $v \in T$.
\item
$\val_v(dp_{\I}(t_v,s_v) \leq 1$ for every $v \in T \setminus S_0$.
\item
If $2 \in T \setminus S_0$ then $\val_2(dp_{\I}(t_v,s_v) = 1$.
\item
There exist two places $v^1_\infty,v^2_\infty \in S_0$ such that $-dp_{\I}(t_{v},s_{v}) \in \QQ_{v}^*$ is a square for $v \in \{v^1_\infty,v^2_\infty\}$.
\item
$\sum_v A_i(P_v) = 0$ for every $i \in \I$ (see Definition~\ref{d:Ai}).
\item
For every $v_x \in S_D$ the point $P_v \in \X(\ZZ_v)$ lies above the point $(t_{x},s_{x}) \in \A^2(\ZZ_{v_x})$  of Construction~\ref{c:S_D}.
\end{enumerate}
\end{define}

\begin{define}\label{d:T0}
Let $T$ be a finite set of places containing $S$ and let $(P_v)_{v \in T} = (t_v,s_v,x_v,y_v)_{v \in T}$ be a suitable partial adelic point on $\X$. We will denote by $T_0 \subseteq T$ the set of places which are either in $S_0$, or for which $\val_v(dp_{\I}(t_v,s_v)) = 1$. 
\end{define}

\begin{rem}
Let $T$ be a finite set of places containing $S$ and let $P_T = (t_v,s_v,x_v,y_v)_{v \in T}$ be a suitable partial adelic point on $\X$. By Property (3) of Definition~\ref{d:suitable} we see that $T_0$ always contains the prime $2$.
\end{rem}

\begin{prop}\label{p:suitable}
Under the assumptions of Theorem~\ref{t:main-uncond}, for every finite set of places $T$ containing $S$ there exists a suitable partial adelic point $P_T$ on $\X$ indexed by $T$.
\end{prop}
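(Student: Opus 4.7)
The plan is to start from the $S_0$-integral adelic point $(Q_v)_{v \in \Om_\QQ}$ on $\X$ given by the hypotheses of Theorem~\ref{t:main-uncond} and modify it in three stages, ending with a suitable partial adelic point indexed by $T$. The input $(Q_v)$ already satisfies conditions (1)--(3) of Definition~\ref{d:suitable} at every $v \notin S_0$ (this is condition (1) of Theorem~\ref{t:main-uncond}) and condition (4) at $v^1_\infty, v^2_\infty$ (condition (2) of Theorem~\ref{t:main-uncond}). Moreover, by Proposition~\ref{p:vertical}(1) the assumed orthogonality to $\Br^{\verti}(Y,p)$ translates into orthogonality of the lifted adelic point on $\X$ with $\Br^{\verti}(X,\pi)$.

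The first stage is to arrange $dp_\I(t_v,s_v) \neq 0$ at every $v \in S_0$: this is an open condition on $X(\QQ_v)$, hence achievable by arbitrarily small perturbations, and those perturbations preserve the open squareness condition at $v^1_\infty, v^2_\infty$. Brauer pairings being locally constant on $X(\QQ_v)$, orthogonality to $\Br^{\verti}(X,\pi)$ survives. Restricting to $T$, in the second stage I would apply Proposition~\ref{p:reduction}, whose hypothesis --- orthogonality to those $\FF_2$-linear combinations of the $A_i$'s that lie in $\Br^{\verti}(X,\pi)$ --- is supplied by Proposition~\ref{p:vertical}(2) together with the previous step. The rescaling $P_v \mapsto (qt_v,qs_v,q^{-n}x_v,q^{-m}y_v)$ by the $T$-unit $q$ produced by Proposition~\ref{p:reduction} preserves conditions (1)--(4) (since $|\I|$ is even, $dp_\I(qt_v,qs_v)$ differs from $dp_\I(t_v,s_v)$ by the square $q^{|\I|}$) and delivers condition (5), with the understanding that the sum in (5) coincides with the sum over all places because Lemma~\ref{l:good} makes each $A_i$ vanish on every $\ZZ_v$-point for $v \notin T \supseteq S_0 \cup S_{\bad}$.

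The step I expect to require the most care, the third stage, is to replace the $v_x$-component with a $\ZZ_{v_x}$-point of $\X$ lying above $(t_x,s_x)$ for each $v_x \in S_D$ --- such a point exists by Construction~\ref{c:S_D} via Lemma~\ref{l:fiber} --- thereby achieving condition (6). The apparent difficulty is that this modification might spoil the orthogonality (5) just obtained; however, it turns out to be \emph{free}, because $v_x \notin S_0 \cup S_{\bad}$ and Lemma~\ref{l:good} then forces $A_i$ to vanish on \emph{every} $\ZZ_{v_x}$-point of $\X$, so both the old and the new $v_x$-components contribute $0$ to $\sum_v A_i(P_v)$. Conditions (1)--(3) at the new $v_x$-component follow from the construction: $p_{i_x}(t_x,s_x)$ is a uniformizer while $p_i(t_x,s_x)$ is a $v_x$-unit for $i \neq i_x$ (since $v_x \notin S_{\bad}$ implies $v_x \nmid \Del_{i,i_x}$), $d$ itself is a $v_x$-unit, and $v_x \neq 2$; hence $\val_{v_x}(dp_\I(t_x,s_x)) = 1$. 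Condition (4) at $v^1_\infty, v^2_\infty$ is untouched, and the resulting partial adelic point $P_T$ is suitable.
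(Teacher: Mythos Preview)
Your proof is correct and follows essentially the same three-stage approach as the paper's own proof: start from the adelic point supplied by the hypotheses of Theorem~\ref{t:main-uncond} (which already gives (2)--(4)), perturb at $S_0$ for (1), invoke Proposition~\ref{p:reduction} for (5), and finally impose (6) at the places of $S_D$, using Lemma~\ref{l:good} to ensure (5) is undisturbed. Your write-up is more detailed than the paper's brief sketch --- in particular you verify explicitly that the rescaling by $q$ preserves (1)--(4) via the parity of $|\I|$, and that the replacement at $v_x$ yields $\val_{v_x}(dp_\I)=1$ --- but the argument is the same.
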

\begin{proof}
The assumptions of Theorem~\ref{t:main-uncond} provide an adelic point satisfying properties (2)-(4) of Definition~\ref{d:suitable}. A small deformation can be made to guarantee property (1) and Proposition~\ref{p:reduction} can be used to provide (5). Finally, property (6) can be achieved without disrupting (5) in light of Lemma~\ref{l:good}.
\end{proof}

\subsection{Fibers with points everywhere locally}\label{s:fibration}

\begin{define}
Given an element $x \in \QQ^*$ we will denote by $[x]_v \in \QQ_v^*/(\QQ_v^*)^2$ the equivalence class of the image of $x$ in $\QQ_v^*$.
\end{define}

\begin{define}\label{d:admissible}
Given a set of places $T$ containing $S$, and a suitable partial adelic point $P_T = (t_v,s_v,x_v,y_v)_{v \in T}$ on $\X$, we will say that a pair $(t,s) \in \ZZ_{S_0}^2$ is \textbf{$T$-admissible} with respect to $P_T$ if
\begin{enumerate}[(1)]
\item 
$[p_i(t,s)]_v = [p_i(t_v,s_v)]_v \in \QQ_v^*/(\QQ_v^*)^2$ for every $v \in T$.
\item
For each $i \in \I$, $p_i(t,s)$ is a unit outside $T$ except at exactly one place $u_i$ where $\val_{u_i}(p_i(t,s)) = 1$.
\item
The fiber $\X_{(t,s)}$ has an $S_0$-integral adelic point. 
\end{enumerate}
We will often omit an explicit reference to the partial adelic point $P_T$ when it is clear in the context.
\end{define}

\begin{define}\label{d:T}
Given a $T$-admissible pair $(t,s)$ we will denote by $T(t,s) = T \cup \{u_i\}_{i \in \I}$ and by $T_0(t,s) = T_0 \cup \{u_i\}_{i \in \I}$ (see Definition~\ref{d:T0}). We will denote by $\T_{(t,s)}$ the group scheme over $\ZZ_{T_0(t,s)}$ given by the equation
$$ x^2  + dp_{\I}(t,s)y^2 = 1 $$
\end{define}

\begin{rem}\label{r:control}
Keeping the notation of Definition~\ref{d:T}, we note the fiber $\X_{(t,s)}$ can be identified with the scheme $\ZZ_{S_0}$-scheme $\Z^{ap_{\cA}(t,s)}_0$ studied in \S\ref{s:descent}. If we extend our scalars from $\ZZ_{S_0}$ to $\ZZ_{T_0(t,s)}$ then we may consider $\X_{(t,s)}$ as a torsor under the algebraic torus $\T_{(t,s)}$. Since $(P_v)_{v \in T}$ is suitable it follows that the element $-dp_{\I}(t,s)$ satisfies Assumption~\ref{a:base}. Proposition~\ref{p:final-step} now implies that $\X_{(t,s)}$ has an $S_0$-integral point if and only if it has a $T_0(t,s)$-integral point.
\end{rem}

The following proposition ensures the existence of sufficient supply of $T$-admissible pairs. It can be considered as an instance of the \textbf{fibration method}.
\begin{prop}\label{p:fibration}
Let $T$ be a finite set of places containing $S$ and let $P_T = (t_v,s_v,x_v,y_v)_{v \in T}$ be a suitable partial adelic point on $\X$. Then there exists an $T$-admissible pair $(t,s)$ with respect to $P_T$.
\end{prop}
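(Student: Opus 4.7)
The strategy is to apply Theorem~\ref{t:gtz} to the forms $\{p_i\}_{i\in\I}$ with exceptional set $T$, using the components $(t_v,s_v)$ of $P_T$ at places $v\in T$ as the target data. The hypothesis of Theorem~\ref{t:gtz} is built into the definition of $S_{\bad}\subseteq T$: for every $v\notin T$ one has $v\notin S_{\bad}$, so there exists $(t_v,s_v)\in\ZZ_v^2$ with $\prod_i p_i(t_v,s_v)\in\ZZ_v^*$. Demanding $v$-adic approximation sharp enough so that, for every $i\in\I$ and every $v\in T$, the valuation of $p_i(t,s)$ matches that of $p_i(t_v,s_v)$ and the quotient lies in $1+\m_v^k$ for sufficiently large $k$, the conclusion of Theorem~\ref{t:gtz} produces $(t,s)\in\ZZ_{S_0}^2$ satisfying item~(1) of Definition~\ref{d:admissible} together with, for each $i$, a pairwise distinct new prime $u_i\notin T$ and a factorization $p_i(t,s)=u_i\pi_{u_i}$ with $u_i$ a $T\cup\{u_i\}$-unit, which is item~(2).

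What remains is to verify item~(3) of Definition~\ref{d:admissible}, namely that $\X_{(t,s)}$ carries an $S_0$-integral adelic point. At $v\in S_0$, the matching of square classes of $ap_{\cA}(t,s)$ and $bp_{\cB}(t,s)$ turns the $\QQ_v$-point $P_v$ into a $\QQ_v$-point of $\X_{(t,s)}$ via the Hilbert symbol characterization of conic solubility. At $v\in T\setminus S_0$, the ratios $ap_{\cA}(t,s)/ap_{\cA}(t_v,s_v)$ and $bp_{\cB}(t,s)/bp_{\cB}(t_v,s_v)$ lie in $1+\m_v^k$ and are therefore squares of units of $\OO_v$ once $k$ is large, so rescaling the coordinates of $P_v$ by the relevant square roots gives an $\OO_v$-point of $\X_{(t,s)}$. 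At $v\notin T(t,s)$ the coefficients $a,b$ and every $p_i(t,s)$ are $v$-adic units, with $v$ odd since $2\in T$, so the mod-$v$ reduction of the affine conic is smooth and non-empty by a pigeonhole count on the sets $\{ap_{\cA}(t,s)x^2\}$ and $\{1-bp_{\cB}(t,s)y^2\}$ in $\FF_v$, and Hensel's lemma lifts the point to $\X_{(t,s)}(\OO_v)$.

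The delicate case is $v=u_i$, where $p_i(t,s)$ is a uniformizer and exactly one of $ap_{\cA}(t,s)$, $bp_{\cB}(t,s)$ acquires valuation one. By Lemma~\ref{l:fiber}, local solubility at $u_i$ is equivalent to $aD^{\cA}_i$ being a square at $u_i$, equivalently to the vanishing of $A_i(P_{u_i})=\langle aD^{\cA}_i,p_i(t,s)\rangle_{u_i}$. The element $aD^{\cA}_i$ is supported entirely on primes of $S_{\bad}\subseteq T$, so at every place $v\notin T\cup\{u_i\}$ both arguments of $A_i$ are units and $v$ is odd, forcing $A_i(P_v)=0$ there. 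Global Hilbert reciprocity $\sum_v A_i(P_v)=0$, combined with item~(5) of suitability $\sum_{v\in T}A_i(P_v)=0$, then yields $A_i(P_{u_i})=0$, as needed. The main obstacle in the plan is arranging the Green--Tao--Ziegler step and the reciprocity accounting to mesh correctly at the newly-introduced primes $u_i$; the careful design of the sets $S$, $S_{\bad}$, $S_D$ and of the suitability axioms in the previous subsections was engineered precisely so that this bookkeeping works out.
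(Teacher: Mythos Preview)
Your proposal is correct and follows essentially the same route as the paper's proof: apply Theorem~\ref{t:gtz} to secure conditions~(1)--(2) of Definition~\ref{d:admissible}, then verify local solubility of $\X_{(t,s)}$ at places of $T$ by approximation, at places outside $T(t,s)$ by smoothness of the reduction, and at each $u_i$ via Lemma~\ref{l:fiber} combined with Hilbert reciprocity and suitability axiom~(5). Your write-up is in fact more explicit than the paper's (which compresses the $v\in T$ case into ``by the inverse function theorem'' and the $v\notin T(t,s)$ case into ``clearly''); the only cosmetic point is that your notation $A_i(P_{u_i})$ is slightly premature since no point at $u_i$ has yet been produced---better to write the Hilbert symbol $\langle aD^{\cA}_i, p_i(t,s)\rangle_{u_i}$ directly, as the paper does.
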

\begin{proof}
Applying the Theorem~\ref{t:gtz} we may find a pair $(t,s) \in \ZZ_{T}^2$ and places $u_i$ for $i \in \I$ satisfying conditions (1)-(2) of Definition~\ref{d:admissible}. By the inverse function theorem the fiber $\X_{(t,s)}$ has an $\ZZ_v$-point for every $v \in T$. It also clearly has an $\ZZ_v$-point for every $v \notin T \cup \{u_i\}_{i \in \I}$. Now fix an $i \in \I$. By quadratic reciprocity and using Property (5) of Definition~\ref{d:suitable} we get that
$$ \left<aD^{\cA}_i,p_i(t,s)\right>_{u_i} = \sum_{v \in T} \left<aD^{\cA}_i,p_i(t_v,s_v)\right>_v = 0 .$$
Since $p_i(t,s)$ is a uniformizer at $u_i$ and $aD^{\cA}_i,$ is a unit at $u_i$ it follows that $aD^{\cA}_i$ is a square at $u_i$ and so the fiber $\X_{(t,s)}$ has an $\ZZ_{u_i}$-point by Lemma~\ref{l:fiber}.
\end{proof}

\subsection{The Selmer groups of admissible fibers}\label{s:selmer-admis}

Let $T$ be a finite set of places of $\QQ$ containing $S$ and let $P_T := (P_v)_{v \in T}$ be a suitable partial adelic point. Given a $T$-admissible pair $(t,s)$, we wish to control the associated Selmer group $\Sel(\T_{(t,s)},T_0(t,s))$ and dual Selmer group $\Sel(\what{\T}_{(t,s)}, T_0(t,s))$ of the group scheme $\T_{(t,s)}$ with respect to the set of places $T_0(t,s)$. 
This turns out to be difficult to achieve directly. The problem is that this group depends on the various Hilbert symbols $\left<p_j(t,s),p_i(t,s)\right>_{u_i}$, which we cannot fully control when generating our $T$-admissible pairs via Proposition~\ref{p:fibration}. To overcome this difficulty we will replace the Selmer group with the \textbf{strict Selmer group} and the dual Selmer group with the \textbf{weak dual Selmer group} (see \S\ref{ss:strict-selmer}). It turns out that these variants are in fact \textbf{independent} of the specific $T$-admissible pair we choose, and only depend on the original suitable adelic point $P_T$ (see Corollary~\ref{c:indep} below).

\begin{define}\label{d:J}
Let $T$ be a finite set of places of $\QQ$ containing $S$. We will denote by $J_T \subseteq \G$ (resp. $J^T \subseteq \G$) the subgroup generated by $I_{T}$ (resp. $I^{T}$) and the symbols $[p_i]$ (see~\S\ref{ss:selmer} and~\S\ref{s:main} for the notation of $I_T$ and $\G$ respectively). For a $T$-admissible pair $(t,s)$, we will denote by $\ev_{(t,s)}: J_{T} \lrar I_{T(t,s)}$ the natural map obtained by sending $[p_i]$ to $[p_i(t,s)]$. We similarly define the evaluation map $\ev^{(t,s)}: J^{T} \lrar I^{T(t,s)}$. By our assumptions on $T$ the maps $\ev_{(t,s)}$ and $\ev^{(t,s)}$ are isomorphisms of $\FF_2$-vector spaces. 
\end{define}

Let is recall the notation used in \S\ref{ss:selmer}. For each place $v$ of $\QQ$, let $V_v$ and $V^v$ denote two copies of $H^1(\QQ_v,\ZZ/2) \cong \QQ_v^*/(\QQ_v^*)^2$, considered as $\FF_2$-vector spaces and let $W^v(t,s) \subseteq V^v$ be the subspace defining the dual Selmer condition at $v$ for the $\ZZ_{T_0(t,s)}$-group scheme $\T_{(t,s)}$. More explicitly, if $v \in T_0(t,s)$ then $W^v(t,s)$ is the subspace generated by the class $[-dp_{\I}(t,s)]$, and if $v \notin T_0(t,s)$ then $W^v(t,s)$ is the image of $\ZZ_v^*/(\ZZ_v^*)^2$ inside $V^v$. In both cases, the subspace $W_v(t,s) \subseteq V_v$ defining the Selmer condition of $\T_{(t,s)}$ at $v$ is given by the orthogonal complement of $W^v(t,s)$ with respect to the Hilbert symbol pairing.

\begin{define}\label{d:Q-hat}
We will denote by $\Q_{P_T} \subseteq J_{T}$ the inverse image of the \textbf{strict Selmer group} $\Sel^{-}(\T_{(t,s)}, T_0(t,s), \{u_i\}_{i \in \I})$ via $\ev_{(t,s)}$ and by $\what{\Q}_{P_T} \subseteq J^{T}$ the inverse image of the \textbf{weak dual Selmer group} $\Sel^{+}(\what{\T}_{(t,s)}, T_0(t,s),\{u_i\}_{i \in \I})$ via $\ev^{(t,s)}$ (see \S\ref{ss:strict-selmer}). We note that 
$$ \Q_{P_T} \cong \Sel^{-}(\T_{(t,s)}, T_0(t,s),\{u_i\}_{i \in \I}) $$ 
and 
$$ \what{\Q}_{P_T} \cong \Sel^{+}(\what{\T}_{(t,s)}, T_0(t,s),\{u_i\}_{i \in \I}) .$$
\end{define}

\begin{rem}
The notation in Definition~\ref{d:Q-hat} may appear abusive, since $\Q_{P_T}$ and $\what{\Q}_{P_T}$ depend, a-priori, on the choice of $T$-admissible pair $(t,s)$, and not just on $P_T$. However, we will below that the dependence on $(t,s)$ is vacuous, and the subgroup $\Q_{P_t}$ and $\what{\Q}_{P_t}$ depend, in fact, only on the suitable partial adelic point $P_T$ (see Corollary~\ref{c:indep}). 
\end{rem}

\begin{rem}\label{r:not-depend}
By the definition of $T$-admissible pair we have $[p_i(t,s)]_v = [p_i(t_v,s_v)]_v \in \QQ_v^*/(\QQ_v^*)^2$ for every $v \in T$, and so for such $v$ the groups $W_v(t,s)$ and $W^v(t,s)$ do not depend on the choice of the $T$-admissible pair $(t,s)$. 
\end{rem}

The next lemma identifies the exact dependency of $W_v(t,s)$ and $W^v(t,s)$ on $t,s$ for places of a certain type (that which includes in particular, the places $\{u_i\}_{i \in \I}$ associated to $(t,s)$).

\begin{lem}\label{l:local-selmer}
Let $T$ be a finite set of places of $\QQ$ containing $S$, let $P_T = \{P_v\}_{v \in T}$ be a suitable partial adelic point and let $(t,s)$ be a $T$-admissible pair with respect to $P_T$. Let $i \in \I$ be an element and let $v \in T_0(t,s)$, $v \notin S_0 \cup S_{\bad}$ be an odd place such that $(t,s) = (rd_i,-rc_i)$ mod the maximal ideal $\m_{v} \subseteq \ZZ_v$ for some $r \in \ZZ_v^*$. Then for $x = [c][p_{\J}] \in J_T$ we have 
$$ [cu^{|\J|}D^{\J}_i]_v = \left\{\begin{matrix} [\ev_{(t,s)}(x)]_v & i \notin \J \\  [\ev_{(t,s)}(x[d][p_{\I}])]_v & i \in \J \end{matrix}\right. $$ 
and $\ev_{(t,s)}(x) \in W_v(t,s)$ if and only if $[cr^{|\J|}D^{\J}_i]_v = 0$. Similarly, for $x = [c][p_{\J}] \in J^T$ we have
$$ [cr^{|\J|}\what{D}^{\J}_i]_v = \left\{\begin{matrix} [\ev_{(t,s)}(x)]_v & i \notin \J \\  [\ev_{(t,s)}(x[-d][p_{\I}])]_v & i \in \J \end{matrix}\right. $$ 
and $\ev^{(t,s)}(x) \in W^v(t,s)$ if and only if $[cr^{|\J|}\what{D}^{\J}_i]_v = 0$.
\end{lem}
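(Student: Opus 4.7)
The plan is to reduce everything to congruences mod $\m_v$ coming from the hypothesis $(t, s) \equiv (rd_i, -rc_i)\pmod{\m_v}$. By linearity of the linear forms $p_j$, this immediately gives $p_j(t, s) \equiv r\Del_{i, j}\pmod{\m_v}$ for $j \neq i$ and $p_i(t, s) \equiv 0\pmod{\m_v}$; since $v \notin S_{\bad}$, each $\Del_{i, j}$ and $d$ is a unit at $v$, so every $p_j(t, s)$ with $j \neq i$ is a unit whose class in $V_v$ is $[r\Del_{i, j}]_v$.

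To prove the formulas I would split on whether $i \in \J$. If $i \notin \J$, direct substitution into $\ev_{(t, s)}(x) = c\prod_{j \in \J}p_j(t, s)$ yields $[\ev_{(t, s)}(x)]_v = [cr^{|\J|}\prod_{j \in \J}\Del_{i, j}]_v = [cr^{|\J|}D_i^{\J}]_v$ via the identity $\prod_{j \in \J}\Del_{i, j} = p_{\J}(d_i, -c_i) = D_i^{\J}$ from Definition~\ref{d:D}. If $i \in \J$, the factor $p_i(t, s)$ is non-unit, so I would bypass it by evaluating at $x[d][p_{\I}] = [cd][p_{\J^c}]$, a product over $\J^c \not\ni i$ which is automatically a unit; the analogous substitution together with $p_{\J^c}(d_i, -c_i) = D_i^{\J}/d$ (Definition~\ref{d:D} for $i \in \J$) and the parity observation $|\J^c| \equiv |\J|\pmod 2$ (using that $|\I|$ is even) produces the claimed $[cr^{|\J|}D_i^{\J}]_v$. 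The dual formulas follow by repeating the computation for $x[-d][p_{\I}] = [-cd][p_{\J^c}]$; the single sign flip is accounted for by the identity $\what{D}_i^{\J} = -D_i^{\J}$ when $i \in \J$ (and $\what{D}_i^{\J} = D_i^{\J}$ when $i \notin \J$), immediate from Definition~\ref{d:D}.

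For the two membership conditions I would use the explicit local description of $W_v(t, s)$ and $W^v(t, s)$. The key structural input is that $-dp_{\I}(t, s)$ has odd valuation at $v$: indeed $\val_v(d) = 0$ since $v \notin S_{\bad}$, and a case check on $v \in T_0(t, s) = T_0 \cup \{u_j\}_{j \in \I}$ shows that $\val_v(p_i(t, s))$ is odd — if $v = u_{i'}$, admissibility and the congruence hypothesis force $i' = i$ and valuation $1$, while if $v \in T_0 \cap T \setminus S_0$, property (1) of admissibility $[p_i(t, s)]_v = [p_i(t_v, s_v)]_v$ together with $\val_v(p_{\I}(t_v, s_v)) = 1$ forces the parity. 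Consequently $W^v(t, s) = \langle[-dp_{\I}(t, s)]\rangle$ is one-dimensional with a generator of odd valuation, hence consists of the zero class and a unique non-trivial class of odd valuation. The dual membership condition then reduces to the vanishing of $[cr^{|\J|}\what{D}_i^{\J}]_v$: when $i \notin \J$, $\ev^{(t, s)}(x)$ is a unit and must have trivial class; when $i \in \J$, $\ev^{(t, s)}(x)$ has odd valuation and membership in $W^v(t, s)$ is equivalent to $[\ev^{(t, s)}(x)]_v = [-dp_{\I}(t, s)]_v$, equivalently the vanishing of $[\ev^{(t, s)}(x[-d][p_{\I}])]_v$, which by the formulas is exactly $[cr^{|\J|}\what{D}_i^{\J}]_v$. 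For the Selmer condition I would use that $W_v(t, s) = W^v(t, s)^{\perp}$ under the Hilbert pairing, so membership amounts to the vanishing of $\langle\ev_{(t, s)}(x), -dp_{\I}(t, s)\rangle_v$; bilinearity together with $\langle a, -a\rangle_v = 0$ replaces $\ev_{(t, s)}(x)$ by $\ev_{(t, s)}(x[d][p_{\I}])$ in the case $i \in \J$, so both cases come down to a Hilbert symbol of the form $\langle\text{unit}, \text{odd valuation}\rangle_v$ at an odd place, which by the tame symbol vanishes iff the unit part is a square in $\FF_v$, i.e.\ iff $[cr^{|\J|}D_i^{\J}]_v = 0$.

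The main delicate point I anticipate is the parity control of $\val_v(p_i(t, s))$ above, which need not equal $1$ in general; but since the Hilbert symbol and membership conditions depend only on classes in $V_v$, parity is all that matters and no finer control is required.
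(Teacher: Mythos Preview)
Your proposal is correct and follows essentially the same route as the paper: reduce everything to the congruence $(t,s)\equiv(rd_i,-rc_i)\pmod{\m_v}$, observe that $p_j(t,s)$ is a unit at $v$ for $j\neq i$, describe $W^v(t,s)$ and $W_v(t,s)$ as the lines generated by $[-dp_{\I}(t,s)]_v$ and $[dp_{\I}(t,s)]_v$, and then unwind Definition~\ref{d:D}. The paper's version is much terser---after establishing the valuations it simply says the claim ``now follows directly from the definitions of $D^{\J}_i$ and $\what{D}^{\J}_i$''---whereas you spell out the case split on $i\in\J$, the parity trick $|\J^c|\equiv|\J|\pmod 2$, and the Hilbert-symbol reduction; but these are exactly the computations the paper is suppressing.

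One point where your write-up is arguably more careful: the paper asserts $\val_v dp_{\I}(t,s)\leq 1$ and hence $\val_v p_i(t,s)=1$, citing suitability and $T$-admissibility. For $v=u_j$ this is immediate, but for $v\in T_0\setminus S_0$ the $T$-admissibility condition $[p_j(t,s)]_v=[p_j(t_v,s_v)]_v$ only controls square classes, i.e.\ parity of valuation. Your observation that odd parity is all that is needed (since membership in $W_v,W^v$ and the tame-symbol computation depend only on classes in $V_v$) is the cleaner way to close this, and it handles both cases uniformly.
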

\begin{proof}
Since $v \notin S_0 \cup S_{\bad}$ we have 
$$  \val_v(p_j(t,s)) = \val_v p_j(d_i,-c_i) = \val_vd_{i,j} = 0 $$
for every $j \neq i$, while by our assumptions we have $\val_v(p_i(t,s)) > 0$. Since $P_T$ is suitable and $(t,s)$ is $T$-admissible we have $\val_v dp_{\I}(t,s) \leq 1$ and hence $\val_v p_i(t,s) = \val_vdp_{\I}(t,s) = 1$. It follows that the $v$-valuation of $\ev_{(t,s)}(x)$ is $1$ if $i \in \J$ and $0$ otherwise. Now $W^v(t,s) \subseteq V^v$ is the subgroup generated by $[-dp_{\I}(t,s)]_v$ and since $v$ is odd we get that $W_v(t,s) \subseteq V_v$ is the subgroup generated by $[dp_{\I}(t,s)]_v$. The claim now follows directly from the definitions of $D^{\J}_i$ and $\what{D}^{\J}_i$ (see Definition~\ref{d:D}).
\end{proof}

The following lemma gives a convenient characterization of elements in the weak dual Selmer group $\what{\Q}_{P_T}$.
\begin{lem}\label{l:explicit-2}
Let $T$ be a finite set of places containing $S$ and $(t,s)$ a $T$-admissible pair with associated places $\{u_i\}_{i \in \I}$. For each $i \in \I$, let $r_i \in \ZZ_{u_i}^*$ be such that $(t,s)$ reduces to $(r_id_i,-r_ic_i)$ mod $u_i$ (such an $r_i$ exists since $p_i(t,s) = c_it + d_is$ is a uniformizer in $\ZZ_v$). Let $x = [c][p_{\J}] \in J_T$ be an element. Then $x$ belongs to $\what{\Q}_{P_T}$ if and only if the following holds:
\begin{enumerate}[(1)]
\item
For every $v \in T$ the element $\ev^{(t,s)}(x)$ belongs to $W^{v}(t,s)$.
\item
There exists a $b \in \ZZ/2$ such that for every $i \in \J$ we have
$$ \left<p_i(t,s), cr_i^{|\J|}\what{D}^{\J}_i\right>_{u_i} = b $$ 
\end{enumerate}
\end{lem}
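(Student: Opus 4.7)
The plan is to unfold the definition of $\what{\Q}_{P_T}$ place-by-place. By construction, $x = [c][p_\J]$ lies in $\what{\Q}_{P_T}$ if and only if $\eta := \ev^{(t,s)}(x) = c\prod_{j\in\J}p_j(t,s)$ lies in the weak dual Selmer group $\Sel^+(\what{\T}_{(t,s)}, T_0(t,s), \{u_i\}_{i\in\I}) = I^{S'} \cap W_+^{S'}$, for any auxiliary $S' \supseteq T_0(t,s)$ with trivial Picard group. Membership in $I^{S'}$ is automatic because $\eta$ is built from the $T$-unit $c$ and the local uniformizers $p_j(t,s)$ of primes in $T(t,s)$; the content is the pairing condition $\sum_v\langle a_v,\eta_v\rangle_v = 0$ for every $(a_v)\in W_-^{S'}$.

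I would then decompose $W_-^{S'} = W_{S'\setminus S_1} \oplus (W_{S_1})^-$ with $S_1 = \{u_i\}_{i\in\I}$. Pairing with elements of the first summand forces $\eta_v \in W^v(t,s)$ for every $v\in S'\setminus S_1$; at odd places $v$ outside $T(t,s)$, and at $v \in T\setminus T_0$ (which lies outside $T_0(t,s)$), both $\eta_v$ and $W^v(t,s)$ consist of unit classes so the condition is automatic, and what remains is precisely condition (1) of the lemma, the extension to $T\setminus T_0$ being vacuous.

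For the $S_1$-part I compute $W_{u_i}(t,s)\subseteq V_{u_i}$ as the one-dimensional Hilbert orthogonal of $W^{u_i}(t,s) = \langle[-dp_\I(t,s)]\rangle$. Writing $-dp_\I(t,s) = p_i(t,s)\cdot\bigl(-d\prod_{j\neq i}p_j(t,s)\bigr)$ and reducing the unit part modulo $u_i$ via $p_j(t,s)\equiv r_i\Del_{i,j}$, the unit part becomes congruent to $r_i\what{D}^{\{i\}}_i$ modulo squares (using that $|\I|$ is even). The tame Hilbert-symbol formula $\langle\pi^a\alpha,\pi^b\beta\rangle_{u_i}=ab[-1]_{u_i}+a[\beta]_{u_i}+b[\alpha]_{u_i}$ at the odd place $u_i$ then produces an explicit generator $w_i \equiv p_i(t,s)\cdot\nu_i^{\epsilon_i}$, with $\epsilon_i = [-r_i\what{D}^{\{i\}}_i]_{u_i}$ and $\nu_i$ a non-square unit. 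Since $\val_{u_i}(w_i) = 1$, the valuation-sum map $\lambda\colon W_{S_1}\to\FF_2$ is surjective, $(W_{S_1})^-$ is its codimension-one kernel, and after writing $a_{u_i} = e_i w_i$ the pairing condition at $S_1$ becomes: there exists $b\in\FF_2$ with $\langle w_i,\eta_{u_i}\rangle_{u_i} = b$ independent of $i$.

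To put this into the form of (2), I invoke Lemma~\ref{l:local-selmer}, which identifies $[cr_i^{|\J|}\what{D}^\J_i]_{u_i}$ with $[\eta_{u_i}]$ when $i\notin\J$ and with $[\eta_{u_i}\cdot(-dp_\I(t,s))]$ when $i\in\J$, both of which are unit classes at $u_i$; in particular $\langle p_i(t,s),cr_i^{|\J|}\what{D}^\J_i\rangle_{u_i}$ is just the Legendre-type symbol $[cr_i^{|\J|}\what{D}^\J_i]_{u_i}$. A short Hilbert-symbol manipulation using the explicit form of $w_i$ then shows
\[
\langle w_i,\eta_{u_i}\rangle_{u_i} \;=\; \langle p_i(t,s),\,cr_i^{|\J|}\what{D}^\J_i\rangle_{u_i}
\]
for each $i$, turning the constancy condition into exactly the form demanded by (2). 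The main obstacle is this last identification: for $i\in\J$ one must verify that $r_i\what{D}^{\{i\}}_i\cdot U_\eta$ collapses to $cr_i^{|\J|}\what{D}^\J_i$ modulo squares, where $U_\eta$ is the unit part of $\eta$ at $u_i$; this collapse hinges on the identity $\what{D}^\J_i = -d\prod_{k\in\J^c}\Del_{i,k}$ together with the fact that the doubly occurring factors $\Del_{i,j}$ with $j\in\J\setminus\{i\}$ are absorbed into squares.
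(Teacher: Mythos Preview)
Your approach is essentially the same as the paper's: reduce membership in $\Sel^{+}$ to orthogonality against $W^{-}_{S'}$, split off the places of $T$ to get condition~(1), and at each $u_i$ reduce to the constancy of a single Hilbert symbol, then match that symbol with $\left<p_i(t,s),\,c\,r_i^{|\J|}\what{D}^{\J}_i\right>_{u_i}$ via Lemma~\ref{l:local-selmer}. The one substantive difference is your choice of generator for $W_{u_i}(t,s)$. You compute it explicitly through the tame-symbol formula as $p_i(t,s)\cdot\nu_i^{\epsilon_i}$ with $\epsilon_i=[-r_i\what{D}^{\{i\}}_i]_{u_i}$, and then have to do a nontrivial bookkeeping step (your final paragraph) to collapse the unit factors. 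The paper instead observes that $[dp_{\I}(t,s)]_{u_i}$ already lies in $W_{u_i}(t,s)$ because $\langle a,-a\rangle=0$, and since it has valuation~$1$ it generates. With that generator the passage from the constancy condition to the stated form is a one-liner: for $i\notin\J$ one has $\langle dp_{\I}(t,s),\alp\rangle_{u_i}=\langle p_i(t,s),\alp\rangle_{u_i}$ since the cofactor is a unit, and for $i\in\J$ one uses $\langle dp_{\I}(t,s),\alp\rangle_{u_i}=\langle dp_{\I}(t,s),\alp\cdot(-dp_{\I}(t,s))\rangle_{u_i}$ and then applies Lemma~\ref{l:local-selmer} directly. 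Your route works but is noticeably heavier.

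One small inaccuracy: you assert that at $v\in T\setminus T_0$ the condition $\eta_v\in W^v(t,s)$ is automatic because ``both $\eta_v$ and $W^v(t,s)$ consist of unit classes''. This is not quite right: $W^v(t,s)=\ZZ_v^*/(\ZZ_v^*)^2$ there, but $c$ is only a $T$-unit and may well have odd valuation at such $v$, so $\eta_v$ need not be a unit class. Fortunately this does not damage your argument, since condition~(1) of the lemma is stated for all $v\in T$ and therefore absorbs these places; only your side-remark that the extension is ``vacuous'' is incorrect.
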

\begin{proof}
Let $T' = T(t,s)$ and let $\alp = \ev^{(t,s)}(x) \in I_{T'}$. Recall that $T' \setminus T = \{u_i\}_{i \in \I}$. By Remark~\ref{r:2-descent-strict} we see that $\alp$ belongs to 
$$\Sel^{+}(\what{\T}_{(t,s)}, T_0(t,s), \{u_i\}_{i \in \I})$$ 
if and only if for every $(x_v)_{v \in T'} \in \oplus_{v \in T'}W_v(t,s)$ such that $\sum_{v \in T' \setminus T} \val_v x_v$ is even one has
$$ \sum_{v \in T'} \left<x_v,\alp\right>_v = 0 .$$
Since each $W_v(t,s)$ is at most $1$-dimensional, and since for every $v \in T' \setminus T$ the generator of $W_v(t,s)$ has odd valuation, a simple linear algebra argument shows that this condition can be equivalently stated as follows. For each $v \in T'$ let $x_v \in W_v(t,s)$ be a generator. Then $\alp$ belongs to the weak dual Selmer group if and only if there exists a $b \in \ZZ/2$ such that
$$ \left<x_v, \alp\right>_v = \left\{\begin{matrix} 0 & v \in T \\ b & v \in T' \setminus T \\ \end{matrix}\right. .$$
By construction we may take $x_v = [dp_{\I}(t,s)]_v$ for every $v \in T' \setminus T$. Now recall that $\val_{u_i}(p_i(t,s)) = \val_{u_i}(dp_{\I}(t,s)) = 1$ and $\val_{u^i}(\alp) = \val_{u^i}(cp_{\J}(t,s))$ is $1$ if $i \in \J$ and $0$ otherwise. It follows that for every $i \in \J^c$ we have
$\left<dp_{\I}(t,s), \alp\right>_{u_i} = \left<p_i(t,s), \alp\right>_{u_i}$ and for every $i \in \J$ we have 
$$ \left<dp_{\I}(t,s), \alp)\right>_{u_i} = \left<dp_{\I}(t,s), \alp(-dp_{\I}(t,s))\right>_{u_i} = \left<dp_{\I}(t,s), \ev^{(t,s)}(x[-d][p_{\I}])\right>_{u_i} .$$ 
The desired result is now an immediate consequence.
\end{proof}

\begin{cor}\label{c:indep}
The subgroups $\Q_{P_T} \subseteq J_T$ and $\what{\Q}_{P_t} \subseteq J^T$ do not depend on $(t,s)$, but only on $P_T$. In particular, the strong Selmer group and weak dual Selmer group of $\T_{(t,s)}$ is essentially independent of $(t,s)$, as long as $(t,s)$ is a $T$-admissible pair with respect to $P_T$.
\end{cor}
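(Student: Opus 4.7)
The plan is to verify that membership in both $\what{\Q}_{P_T}$ and $\Q_{P_T}$ is controlled entirely by data determined by $P_T$, using Lemma~\ref{l:explicit-2} for the weak dual Selmer side and an analogous direct unpacking for the strict Selmer side. First I will derive the parallel characterization of $\Q_{P_T}$: by untangling the definition of $\Sel^{-}(\T_{(t,s)},T_0(t,s),\{u_i\}_{i\in\I})$ as $I_{T(t,s)} \cap W^{-}_{T(t,s)}$, one finds that $x=[c][p_{\J}] \in J_T$ lies in $\Q_{P_T}$ iff $|\J|$ is even (this is the strict parity constraint $\sum_{i\in\I}\val_{u_i}(cp_{\J}(t,s))\equiv 0\pmod 2$), $[cp_{\J}(t_v,s_v)]_v$ lies in $W_v(t,s)$ for every $v \in T$, and $[cD_i^{\J}]_{u_i}=0$ for every $i \in \I$. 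The latter arises for $i \in \J$ by matching the odd-valuation generator $[dp_{\I}(t,s)]_{u_i}$ of $W_{u_i}(t,s)$ against $[cp_{\J}(t,s)]_{u_i}$ and simplifying by $p_i(t,s)^2$, and for $i \notin \J$ by requiring the unit class $[cp_{\J}(t,s)]_{u_i}$ to be trivial; in both cases Lemma~\ref{l:local-selmer} identifies the simplified class with $[cD_i^{\J}]_{u_i}$.

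Next I will show that each condition in the two characterizations depends only on $P_T$. The local conditions at $v \in T$ are handled immediately by Definition~\ref{d:admissible}(1) and Remark~\ref{r:not-depend}. The main step is to control the $u_i$-conditions, which a priori depend on $(t,s)$ through the unknown place $u_i$. The crucial observation is that the elements $cD_i^{\J}$ and $c\what{D}_i^{\J}$ of $\QQ^{\ast}$ have all their prime divisors inside $T$: the factor $c$ is represented by an $I^T$-element, $d$ has prime divisors in $T$ by construction of $S$, and each $\Del_{i,j}$ is an $S$-unit since $S \supseteq S_{\bad}$. I will then apply global quadratic reciprocity to the pairs $(p_i(t,s),cD_i^{\J})$ and $(p_i(t,s),c\what{D}_i^{\J})$. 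At odd $v \notin T \cup \{u_j\}_{j \in \I}$ both factors are units, so the local symbol vanishes; at $v = u_j$ for $j \neq i$ the same is true, using that $u_j$ is odd since $2 \in T$. Reciprocity therefore collapses to
\[
\left<p_i(t,s),\,cD_i^{\J}\right>_{u_i} \;=\; \sum_{v \in T}\left<p_i(t,s),\,cD_i^{\J}\right>_v \;=\; \sum_{v \in T}\left<p_i(t_v,s_v),\,cD_i^{\J}\right>_v,
\]
and analogously with $\what{D}_i^{\J}$, the second equality using $T$-admissibility. Both right-hand sides manifestly depend only on $P_T$. When $|\J|$ is even, the factor $r_i^{|\J|}$ appearing in Lemma~\ref{l:explicit-2}(2) is a square at $u_i$ and drops out, so the Hilbert symbol of Condition~(2) coincides with the one just computed.

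The main obstacle will be the bookkeeping required to derive the characterization of $\Q_{P_T}$ cleanly, especially in the case $i \in \J$ where both $[cp_{\J}(t,s)]_{u_i}$ and the generator $[dp_{\I}(t,s)]_{u_i}$ of $W_{u_i}(t,s)$ carry odd valuation, and one must identify the resulting unit-square condition with $[cD_i^{\J}]_{u_i}=0$. Once this algebraic step is in place, the reciprocity argument above applies uniformly to both groups, yielding the desired independence of $(t,s)$; the final assertion about the strict Selmer and weak dual Selmer groups of $\T_{(t,s)}$ then follows immediately from the isomorphisms $\Q_{P_T}\cong \Sel^{-}(\T_{(t,s)},T_0(t,s),\{u_i\}_{i\in\I})$ and $\what{\Q}_{P_T}\cong \Sel^{+}(\what{\T}_{(t,s)},T_0(t,s),\{u_i\}_{i\in\I})$ of Definition~\ref{d:Q-hat}.
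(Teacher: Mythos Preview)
Your argument for $\Q_{P_T}$ is correct and clean: since the strict parity constraint forces $|\J|$ even, the $r_i^{|\J|}$ factor drops out of the local Selmer condition at $u_i$, and your reciprocity computation
\[
\left<p_i(t,s),\,cD_i^{\J}\right>_{u_i} \;=\; \sum_{v \in T}\left<p_i(t_v,s_v),\,cD_i^{\J}\right>_v
\]
does determine the condition from $P_T$ alone.

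The gap is on the dual side. Elements of $\what{\Q}_{P_T}$ are \emph{not} constrained to have $|\J|$ even (indeed, the case of odd $|\J|$ is used explicitly later in Proposition~\ref{p:main-step}). When $|\J|$ is odd, the term $r_i^{|\J|}$ in Condition~(2) of Lemma~\ref{l:explicit-2} contributes $\left<p_i(t,s),r_i\right>_{u_i}$, where $r_i \in \ZZ_{u_i}^*$ is the unit with $(t,s) \equiv (r_id_i,-r_ic_i) \pmod{u_i}$. This extra symbol is not governed by your reciprocity identity: $r_i$ is specific to the chosen $(t,s)$ and to the place $u_i$, neither of which is visible from $P_T$. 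Consequently the individual Hilbert symbols $\left<p_i(t,s),cr_i\what{D}_i^{\J}\right>_{u_i}$ genuinely depend on $(t,s)$, and your argument stops short.

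What saves the situation is that Condition~(2) only asks that these symbols be \emph{equal} across $i$, not that each one be computable from $P_T$. The paper exploits this: since the even-$|\J|$ case is already settled, it suffices to check one odd-$|\J|$ generator, say $x=[p_j]$. One then compares two $T$-admissible pairs $(t_0,s_0)$ and $(t_1,s_1)$ directly and shows, via a chain of quadratic reciprocity identities passing through $\left<p_i(t_\bullet,s_\bullet),p_j(t_\bullet,s_\bullet)\right>_{u^\bullet_i}$ and $\left<p_i(t_\bullet,s_\bullet),p_j(t_\bullet,s_\bullet)\right>_{u^\bullet_j}$, that the quantity
\[
\left<p_i(t_0,s_0),\,r^0_iD^j_i\right>_{u^0_i} + \left<p_i(t_1,s_1),\,r^1_iD^j_i\right>_{u^1_i}
\]
is independent of $i$. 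This is exactly what is needed for Condition~(2) to transfer between the two pairs, and it is the step your proposal is missing.
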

\begin{proof}
We will prove the claim for $\Q_{P_T}$. The proof for $\what{\Q}_{P_T}$ is completely analogous. Let $(t_0,s_0)$ and $(t_1,s_1)$ be two $T$-admissible pairs with respect to $P_T$ with associated places $\{u^0_i\}_{i \in \I}$ and $\{u^1_i\}_{i \in \I}$ respectively. For $i \in \I$ let $r^0_i,r^1_i \in \ZZ_v^*$ be such that such that $(t_0,s_0)$ reduces to $(r^0_id_i,-r^0_ic_i)$ mod $u^0_i$, and similarly for $r^1_i$. In light of Lemma~\ref{l:explicit-2} and Remark~\ref{r:not-depend} it will suffice to show that a given element $x = [c][p_\J] \in J_T$ satisfies Condition (2) of Lemma~\ref{l:explicit-2} with respect to $(t_0,s_0)$ if and only if it satisfies it with respect to $(t_1,s_1)$. By Lemma~\ref{l:local-selmer} this assertion holds as soon as $|\J|$ as even. It will hence suffice to prove the claim for a single $x$ with $|\J|$ odd. In particular, we may assume that $\J = \{j\}$ for some $j \in \I$ and $c=1$. Unwinding the definitions, what we need to show in this case is simply that the element $\left<p_i(t_0,s_0), r^0_iD^j_i\right>_{u^0_i} + \left<p_i(t_1,s_1), r^1_iD^j_i\right>_{u^1_i} \in \ZZ/2$ is independent of $i$. We first claim that
\begin{equation}\label{e:ij}
\left<p_i(t_0,s_0), r^0_iD^j_i\right>_{u^0_i} + \left<p_i(t_1,s_1), r^1_iD^j_i\right>_{u^1_i} = \left<r^0_jD^i_j, p_j(t_0,s_0)\right>_{u^0_j} + \left<r^1_jD^i_j, p_j(t_1,s_1)\right>_{u^1_j}.
\end{equation}
Since~\eqref{e:ij} clearly holds when $i=j$ we may assume that $i \neq j$. Quadratic reciprocity now gives
$$ \left<p_i(t_0,s_0), r^0_iD^j_i\right>_{u^0_i} + \left<p_i(t_1,s_1), r^1_iD^j_i\right>_{u^1_i} =  \left<p_i(t_0,s_0), p_j(t_0,s_0)\right>_{u^0_i} + \left<p_i(t_1,s_1), p_j(t_1,s_1)\right>_{u^1_i} = $$
$$ \left<p_i(t_0,s_0), p_j(t_0,s_0)\right>_{u^0_j} + \left<p_i(t_1,s_1), p_j(t_1,s_1)\right>_{u^1_j} + \sum_{v \in T_0} \left[\left<p_i(t_0,s_0),p_j(t_0,s_0)\right>_v + \left<p_i(t_1,s_1),p_j(t_1,s_1)\right>_v\right] =  $$
$$ \left<p_i(t_0,s_0), p_j(t_0,s_0)\right>_{u^0_j} + \left<p_i(t_1,s_1), p_j(t_1,s_1)\right>_{u^1_j} = \left<r^0_jD^i_j, p_j(t_0,s_0)\right>_{u^0_j} + \left<r^1_jD^i_j, p_j(t_1,s_1)\right>_{u^1_j} $$ 
where the last equality follows from Lemma~\ref{l:local-selmer}. It is hence left to show that the last expression does not depend on $i$. Using again quadratic reciprocity we get that
$$ \left<D^i_j, p_j(t_0,s_0)\right>_{u^0_j} = \sum_{v \in T_0}\left<D^i_j,p_j(t_v,s_v)\right> = \left<D^i_j, p_j(t_1,s_1)\right>_{u^1_j} $$ 
and hence 
$$ \left<r^0_jD^i_j, p_j(t_0,s_0)\right>_{u^0_j} + \left<r^1_jD^i_j, p_j(t_1,s_1)\right>_{u^1_j} = \left<r^0_j, p_j(t_0,s_0)\right>_{u^0_j} + \left<r^1_j, p_j(t_1,s_1)\right>_{u^1_j} $$
is independent of $i$, as desired.
\end{proof}

We finish this section by noting another direct corollary of the above, which will be useful in the next section.
\begin{cor}\label{c:special}
Let $T$ be a finite set of places containing $S$ and let $(t,s)$ be a $T$-admissible pair. If an element $x = [c][p_{\J}] \in \Q_{(t,s)}$ belongs to $\G_i$ for some $i$ then $x$ belongs to $\G_D$.
\end{cor}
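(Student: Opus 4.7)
The plan is to argue by contradiction, leveraging the specific reason the auxiliary set $S_D$ was introduced in Construction~\ref{c:S_D}.

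Suppose $x = [c][p_\J] \in \Q_{P_T}$ lies in some $\G_i$ but fails to lie in $\G_D$. Since $\G_D = \bigcap_{j \in \I} \G_j$, there is some index $i_x \in \I$ witnessing $x \notin \G_{i_x}$, i.e., $[cD^\J_{i_x}] \notin \{1,[aD^\cA_{i_x}]\}$ in $\QQ^*/(\QQ^*)^2$. But by the very design of Construction~\ref{c:S_D}, the set $S_D$ contains a place $v_x \notin S_0 \cup S_{\bad}$ for which $aD^\cA_{i_x}$ is a square at $v_x$ while $[cD^\J_{i_x}]_{v_x} \neq 0$, together with a local point $(t_x,s_x) \in \ZZ_{v_x}^2$ with $p_{i_x}(t_x,s_x)$ a uniformizer at $v_x$. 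Suitability of $P_T$ (Definition~\ref{d:suitable}(6)) forces $P_{v_x}$ to lie above this $(t_x,s_x)$, and since all $p_j(t_x,s_x)$ for $j \neq i_x$ are units at $v_x$ (as $v_x \notin S_\bad$), we get $\val_{v_x}(dp_\I(t_{v_x},s_{v_x})) = 1$, so $v_x \in T_0$.

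By Corollary~\ref{c:indep}, membership in $\Q_{P_T}$ is independent of the choice of $T$-admissible pair $(t,s)$, so we may freely choose $(t,s)$ to be $v_x$-adically very close to $(t_x,s_x)$; the proof of Proposition~\ref{p:fibration} (which invokes Theorem~\ref{t:gtz}) permits exactly such an approximation at any finite place of $S \supseteq S_D$. Arranging this closely enough, $(t,s) \equiv r_x(d_{i_x},-c_{i_x}) \pmod{v_x}$ for some unit $r_x \in \ZZ_{v_x}^*$. We are now in the situation of Lemma~\ref{l:local-selmer} applied at $v = v_x$ and $i = i_x$: the place $v_x$ belongs to $T_0(t,s)$, lies outside $S_0 \cup S_{\bad}$, and is odd. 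The lemma then identifies the local Selmer condition at $v_x$ with the condition $[cr_x^{|\J|} D^\J_{i_x}]_{v_x} = 0$.

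Since $x \in \G_i$ requires $|\J|$ to be even, $r_x^{|\J|}$ is a square and the condition simplifies to $[cD^\J_{i_x}]_{v_x} = 0$. This contradicts the choice of $v_x$ made in Construction~\ref{c:S_D}. Therefore $x \in \G_j$ for every $j \in \I$, i.e.\ $x \in \G_D$. The only point requiring care is verifying the hypotheses of Lemma~\ref{l:local-selmer}: that $v_x$ actually lands in $T_0(t,s)$ and that $(t,s)$ reduces to the correct line modulo $v_x$. The former uses the structural input that $v_x$ avoids $S_\bad$, and the latter relies crucially on the flexibility afforded by Corollary~\ref{c:indep} together with the strong approximation in the Green--Tao--Ziegler theorem.
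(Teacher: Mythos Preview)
Your proof is correct and follows essentially the same approach as the paper's: use Construction~\ref{c:S_D} to produce a place $v_x$ at which, via Lemma~\ref{l:local-selmer}, the Selmer condition fails for $\ev_{(t,s)}(x)$. The paper's proof is a two-line sketch; you are more explicit about verifying the hypotheses of Lemma~\ref{l:local-selmer} (in particular the reduction condition $(t,s)\equiv r(d_{i_x},-c_{i_x})\pmod{v_x}$), and your use of Corollary~\ref{c:indep} to freely choose a well-approximating $T$-admissible pair is a clean way to secure this.
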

\begin{proof}
By the definition of $\Q_{(t,s)}$ and the strict Selmer group we have that $|\J|$ is even. In light of Lemma~\ref{l:local-selmer} we see that if $x \in \G_i$ is an element which is not in $\G_D$ then $\ev_{(t,s)}(x)$ will fail to satisfy the Selmer condition at the place $v_x$ constructed in Construction~\ref{c:S_D}. 
\end{proof}

\subsection{Comparing Selmer groups of nearby fibers}\label{s:compare}
Let $T$ be a finite set of places, let $P_T = (P_v)_{v \in T}$ be a suitable partial adelic point and let $(t_0,s_0)$ be a $T$-admissible pair with associated places $\{u^0_i\}_{i \in \I}$. In~\S\ref{s:selmer-admis} we saw that the strict Selmer group and weak dual Selmer group of $\T_{(t_0,s_0)}$ depend only on $P_T$, and can be expressed as certain subgroups $\Q_{P_T} \subseteq J_T$ and $\what{\Q}_{P_T} \subseteq J^T$. Our goal in this section is to understand what happens when one adds to $T$ one additional place $w$. Let $T_w = T \cup \{w\}$ and let $P_{T_w} = (P_v)_{v \in T_w} = (x_v,y_v,t_v,s_v)_{v \in T_w}$ be an extension of $P_T$ to $T_w$. We will make the following assumption:

\begin{assume}\label{a:w}
There exists an $i_w \in \I$ for which $p_{i_w}(t_w,s_w)$ is a uniformizer. 
\end{assume}

\begin{rem}\label{r:w}
Since $w \notin S_0$ it follows from Assumption~\ref{a:w} that $p_{i}(t_w,s_w)$ is a unit in $\ZZ_w$ for every $i \neq i_w$.
\end{rem}

Let $(t_1,s_1)$ be a $T_w$-admissible pair with respect to $P_{T_w}$ with associated places $\{u^1_i\}_{i \in \I}$ for $i \in \I$. Our goal in this subsection is to be able to compare the weak dual Selmer groups $\T_{(t_0,s_0)}$ and $\T_{(t_1,s_1)}$, i.e., to compare the subgroups $\what{\Q}_{P_T}$ and $\what{\Q}_{P_{T_w}}$ of $J^T$. This will require some understanding of difference between the strict Selmer groups $\Q_{P_T}$ and $\Q_{P_{T_w}}$ as well. A comparison of the Selmer conditions for $\T_{(t_0,s_0)}$ and $\T_{(t_1,s_1)}$ is described in Proposition~\ref{p:transfer-0} below, leading to our main interest which is Proposition~\ref{p:mazur-formula}, giving sufficient conditions for the weak dual Selmer group of $\T_{(t_1,s_1)}$ to be strictly smaller than that of $\T_{(t_0,s_0)}$, i.e., for $\what{\Q}_{P_{T_w}}$ to be properly contained in $\what{\Q}_{P_T}$. We begin with a few preliminary lemmas.

\begin{lem}\label{l:b}
There exists an element $b \in \ZZ/2$ such that for every $i \neq j \in \I$ with $i \neq i_w$ we have
$$ \left<p_i(t_0,s_0),p_j(t_0,s_0)\right>_{u^0_i} + \left<p_i(t_1,s_1),p_j(t_1,s_1)\right>_{u^1_i} = b $$
\end{lem}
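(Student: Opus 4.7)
The proof rests on repeated applications of quadratic reciprocity, together with the standard fact that at an odd prime, the Hilbert symbol $\langle \pi, u\rangle_v$ of a uniformizer with a unit depends only on the residue of the unit. Set
\[
A_{i,j} := \langle p_i(t_0,s_0), p_j(t_0,s_0)\rangle_{u^0_i} + \langle p_i(t_1,s_1), p_j(t_1,s_1)\rangle_{u^1_i}.
\]
My goal is to show two independence properties: $A_{i,j}$ does not depend on $j$ when $i \neq i_w$ is fixed, and the resulting common value does not depend on $i \neq i_w$.

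For the preparatory step, I will use that since $p_i(t_k,s_k)$ is a uniformizer at $u^k_i$, the pair $(t_k,s_k)$ reduces modulo $u^k_i$ to $r^k_i(d_i,-c_i)$ for some unit $r^k_i$, and therefore $p_j(t_k,s_k) \equiv r^k_i \Del_{i,j} \pmod{u^k_i}$ for $j \neq i$ (recall $\Del_{i,j} = p_j(d_i,-c_i)$). Since $u^k_i$ is odd (as $2 \in T_0 \subseteq T$ while $u^k_i \notin T$), the Hilbert symbol depends only on the residue class of the second argument, giving
\[
\langle p_i(t_k,s_k), p_j(t_k,s_k)\rangle_{u^k_i} = \langle p_i(t_k,s_k), r^k_i\rangle_{u^k_i} + \langle p_i(t_k,s_k), \Del_{i,j}\rangle_{u^k_i}.
\]
The first summand is independent of $j$, so it suffices to control the second.

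For the $j$-independence, I apply quadratic reciprocity to $\sum_v \langle p_i(t_k,s_k), \Del_{i,j}\rangle_v = 0$. Since $\Del_{i,j}$ is a unit outside $S_{\bad} \subseteq T$, and $p_i(t_k,s_k)$ is a unit outside $T \cup \{u^0_i\}$ (resp.\ $T \cup \{w, u^1_i\}$) for $k=0$ (resp.\ $k=1$), and every place outside $T$ is odd, only places in $T \cup \{u^k_i\}$ and possibly $w$ contribute. By $T$-admissibility (condition (1) of Definition~\ref{d:admissible}), $[p_i(t_0,s_0)]_v = [p_i(t_v,s_v)]_v = [p_i(t_1,s_1)]_v$ for every $v \in T$, so the $T$-contributions agree for $k=0$ and $k=1$. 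Summing the two reciprocity identities in $\ZZ/2$ cancels these, yielding
\[
\langle p_i(t_0,s_0), \Del_{i,j}\rangle_{u^0_i} + \langle p_i(t_1,s_1), \Del_{i,j}\rangle_{u^1_i} = \langle p_i(t_1,s_1), \Del_{i,j}\rangle_w.
\]
Since $i \neq i_w$, Remark~\ref{r:w} gives that $p_i(t_1,s_1)$ is a $w$-unit; $\Del_{i,j}$ is also a $w$-unit (as $w \notin T \supseteq S_{\bad}$); and $w$ is odd (as $2 \in T$). Hence the right-hand side vanishes, so the $j$-dependent part of $A_{i,j}$ vanishes and $A_{i,j} = A_i$ depends only on $i$.

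For the $i$-independence, I apply the same strategy directly to $\sum_v \langle p_i(t_k,s_k), p_j(t_k,s_k)\rangle_v = 0$. The non-trivial contributions lie in $T \cup \{u^0_i, u^0_j\}$ for $k=0$ and in $T \cup \{w, u^1_i, u^1_j\}$ for $k=1$; cancelling the $T$-contributions as before leaves
\[
A_{i,j} + A_{j,i} + \langle p_i(t_1,s_1), p_j(t_1,s_1)\rangle_w = 0.
\]
When both $i, j \neq i_w$, both $p_i(t_1,s_1)$ and $p_j(t_1,s_1)$ are units at the odd place $w$, so the symbol at $w$ vanishes and $A_{i,j} = A_{j,i}$. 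Combined with the $j$-independence already established, this gives $A_i = A_{i,j} = A_{j,i} = A_j$ whenever $i, j \in \I \setminus \{i_w\}$ are distinct; taking $b$ to be the resulting common value completes the proof (the case $|\I|=2$, where $\I \setminus \{i_w\}$ has only one element and the statement involves a single pair, is trivial).

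The main obstacle is purely one of bookkeeping: verifying that all auxiliary primes arising in the reciprocity relations are odd and lie outside $S_{\bad}$, so that Hilbert symbols of units evaluate trivially. The single place where the hypothesis $i \neq i_w$ is substantively used is in killing the contribution at $w$, both in the $j$-independence step and in the symmetry argument $A_{i,j} = A_{j,i}$; this explains the asymmetric restriction in the statement of the lemma.
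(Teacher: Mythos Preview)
Your proof is correct and follows essentially the same strategy as the paper: both arguments establish a symmetry $A_{i,j}=A_{j,i}$ for $i,j\neq i_w$ via quadratic reciprocity (using that the $T$-contributions for $(t_0,s_0)$ and $(t_1,s_1)$ agree and the $w$-contribution vanishes), and both establish $j$-independence by reducing to a reciprocity identity for a constant that is a unit outside $T$.

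The one genuine difference is in the $j$-independence step. The paper pairs off two indices and uses Lemma~\ref{l:local-selmer} to replace $p_{\{j,k\}}(t,s)$ by $D^{\{j,k\}}_i$ (the unit $r_i$ disappears because $|\{j,k\}|$ is even), then applies reciprocity to $D^{\{j,k\}}_i$. You instead decompose each $p_j(t_k,s_k)\equiv r^k_i\Del_{i,j}$ individually and apply reciprocity directly to $\Del_{i,j}$, carrying the $r^k_i$-terms as a $j$-independent remainder. Your route is marginally more direct (it gives $A_{i,j}=A_i$ in one stroke, including the case $j=i_w$, rather than deducing it from $b_{i,j}=b_{i,k}$), while the paper's packaging via $D^{\{j,k\}}_i$ dovetails with the general machinery of Lemma~\ref{l:local-selmer} used elsewhere in \S\ref{s:compare}.

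One small imprecision: when you invoke Remark~\ref{r:w} to say ``$p_i(t_1,s_1)$ is a $w$-unit'' for $i\neq i_w$, that remark literally concerns $p_i(t_w,s_w)$. What you actually need (and what holds) is that $[p_i(t_1,s_1)]_w=[p_i(t_w,s_w)]_w$ by condition~(1) of $T_w$-admissibility, and the latter is a unit class by Remark~\ref{r:w}; hence $\val_w p_i(t_1,s_1)$ is even, which is all that is required for the Hilbert symbol against a $w$-unit at the odd place $w$ to vanish. The paper's proof relies on the same fact in the same implicit way.
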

\begin{proof}
If $|\I| = 2$ then the claim holds trivially. We may hence assume that $|\I| > 2$. Let us set 
$$ b_{i,j} \x{\df}{=} \left<p_i(t_0,s_0),p_j(t_0,s_0)\right>_{u^0_i} + \left<p_i(t_1,s_1),p_j(t_1,s_1)\right>_{u^1_i} $$
We first observe that if $i,j \neq i_w$ then by quadratic reciprocity
$$ \left<p_i(t_0,s_0),p_j(t_0,s_0)\right>_{u^0_i} + \left<p_i(t_0,s_0),p_j(t_0,s_0)\right>_{u^0_j} = $$ 
$$ \sum_{v \in T}\left<p_i(t_0,s_0),p_j(t_0,s_0)\right>_v = \sum_{v \in T}\left<p_i(t_1,s_1),p_j(t_1,s_1)\right>_v = $$
$$ \left<p_i(t_1,s_1),p_j(t_1,s_1)\right>_{u^1_i} + \left<p_i(t_1,s_1),p_j(t_1,s_1)\right>_{u^1_j} $$
and so $b_{j,i} = b_{i,j}$ whenever $i,j \neq i_w$. We now claim that if $i,j,k \in \I$ are three distinct elements such that $i \neq i_w$ then $b_{i,j} = b_{i,k}$. To see this observe that by Lemma~\ref{l:local-selmer} we have
$$ \left<p_i(t_0,s_0), p_{\{j,k\}}(t_0,s_0)\right>_{u^0_i} = \left<p_i(t_0,s_0), D^{\{j,k\}}_i\right>_{u^0_i} = $$
$$ \sum_{v \in T} \left<p_i(t_0,s_0), D^{\{j,k\}}_i\right>_v = \sum_{v \in T} \left<p_i(t_1,s_1), D^{\{j,k\}}_i\right>_v = $$
$$ \left<p_i(t_1,s_1),D^{\{j,k\}}_i\right>_{u^1_i} = \left<p_i(t_1,s_1), p_{\{j,k\}}(t_1,s_1)\right>_{u^1_i} .$$
Applying the equalities $b_{j,i} = b_{i,j}$ and $b_{i,j}=b_{i,k}$ for all distinct triplets $i,j,k \in \I \setminus \{i_w\}$ we may deduce the existence of a $b \in \ZZ/2$ such that $b_{i,j} = b$ for every $i \neq j \in \I \setminus \{i_w\}$. Finally, to see that $b_{i, i_w} = b$ for every $i \neq i_w$ we may choose a $j \in \I$ such that $j \neq i,i_w$ (which is possible since we assumed $|\I| > 2$) and use the equality $b_{i, i_w} = b_{i,j} = b$.
\end{proof}

\begin{cor}\label{c:b}
Let $b \in \ZZ/2$ be the element constructed in Lemma~\ref{l:b}, let $x = [c][p_{\J}] \in J^T$ be an element and let $i \in \J^c$. When $i=i_w$ we assume in addition that $cp_{\J}(t_1,s_1)$ is a square at $w$. Then
$$ \left<p_i(t_0,s_0), cp_{\J}(t_0,s_0)\right>_{u^0_i} + \left<p_i(t_1,s_1),cp_{\J}(t_1,s_1)\right>_{u^1_i} = b\cdot |\J| \in \ZZ/2 . $$
\end{cor}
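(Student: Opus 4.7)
My approach is to expand the left-hand side bilinearly as
\[
 L := \left<p_i(t_0,s_0),\, c\right>_{u^0_i} + \left<p_i(t_1,s_1),\, c\right>_{u^1_i} + \sum_{j\in\J}\Bigl[\left<p_i(t_0,s_0),\, p_j(t_0,s_0)\right>_{u^0_i} + \left<p_i(t_1,s_1),\, p_j(t_1,s_1)\right>_{u^1_i}\Bigr]
\]
and treat the $c$-part and the $p_j$-parts separately, in each case using the product formula for the Hilbert symbol. Throughout, a key background fact is that $2 \in S_{\bad} \subseteq T$, so every place outside $T$ is odd, and hence a Hilbert pairing of two units at such a place vanishes.

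For the $c$-part: $c \in I^T$ is a $T$-unit, while $p_i(t_0,s_0)$ is a $T$-unit outside $T \cup \{u^0_i\}$, so the product formula applied to $(p_i(t_0,s_0), c)$ gives $\left<p_i(t_0,s_0), c\right>_{u^0_i} = \sum_{v \in T}\left<p_i(t_0,s_0), c\right>_v$. The analogous computation for the $T_w$-admissible pair $(t_1,s_1)$ produces an additional term at $w$: $\left<p_i(t_1,s_1), c\right>_{u^1_i} = \sum_{v \in T}\left<p_i(t_1,s_1), c\right>_v + \left<p_i(t_1,s_1), c\right>_w$. Since $T$-admissibility (and $T_w$-admissibility restricted to $T$) forces $[p_i(t_0,s_0)]_v = [p_i(t_v,s_v)]_v = [p_i(t_1,s_1)]_v$ for $v \in T$, the two $T$-sums coincide and cancel. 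What remains is $\left<p_i(t_1,s_1), c\right>_w$, which vanishes when $i \neq i_w$ (both arguments are units at the odd place $w$ by Remark~\ref{r:w}) and reduces to $\left<p_{i_w}(t_1,s_1), c\right>_w$ when $i = i_w$.

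For each $p_j$-part with $j \in \J$ (noting $j \neq i$ since $i \in \J^c$), set $b_{k,l} := \left<p_k(t_0,s_0), p_l(t_0,s_0)\right>_{u^0_k} + \left<p_k(t_1,s_1), p_l(t_1,s_1)\right>_{u^1_k}$. When $i \neq i_w$, Lemma~\ref{l:b} gives $b_{i,j} = b$ directly, contributing a total of $b|\J|$ and proving the corollary in this case. When $i = i_w$, the lemma does not apply to $b_{i_w, j}$, and this is the main (though still straightforward) technical point. Repeating the reciprocity computation of Lemma~\ref{l:b} for $(p_{i_w}, p_j)$, but tracking the extra $w$-contribution present for $(t_1,s_1)$, one obtains
\[
 b_{i_w, j} + b_{j, i_w} = \left<p_{i_w}(t_1,s_1), p_j(t_1,s_1)\right>_w.
\]
Since $j \neq i_w$, Lemma~\ref{l:b} does apply to $b_{j, i_w}$ and gives $b_{j, i_w} = b$, hence $b_{i_w, j} = b + \left<p_{i_w}(t_1,s_1), p_j(t_1,s_1)\right>_w$.

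Assembling everything when $i = i_w$ yields
\[
 L = b|\J| + \left<p_{i_w}(t_1,s_1), c\right>_w + \sum_{j\in\J}\left<p_{i_w}(t_1,s_1), p_j(t_1,s_1)\right>_w = b|\J| + \left<p_{i_w}(t_1,s_1),\, c p_{\J}(t_1,s_1)\right>_w.
\]
The hypothesis that $c p_{\J}(t_1,s_1)$ is a square at $w$ makes the residual Hilbert symbol vanish, leaving $L = b|\J|$, as required. The whole argument is essentially bookkeeping in the product formula; the only subtle role is played by the additional hypothesis on $c p_{\J}(t_1,s_1)$, which is precisely what is needed to absorb the new place $w$ in the case $i = i_w$.
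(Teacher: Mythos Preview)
Your proof is correct and follows essentially the same approach as the paper: bilinear expansion, quadratic reciprocity, and reduction to Lemma~\ref{l:b}. The only organizational difference is in the case $i=i_w$: you stay on the $u_{i_w}$-side, derive $b_{i_w,j}=b_{j,i_w}+\left<p_{i_w}(t_1,s_1),p_j(t_1,s_1)\right>_w$ term by term, and then collect the $w$-corrections into $\left<p_{i_w}(t_1,s_1),cp_\J(t_1,s_1)\right>_w$; the paper instead applies reciprocity once to $\left<p_{i_w},cp_\J\right>$ as a whole, transferring directly to $\sum_{j\in\J}$ at the places $u^0_j,u^1_j$ (where only the $p_j$ factor is a non-unit), landing immediately on $\sum_{j\in\J}b_{j,i_w}=b|\J|$.
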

\begin{proof}
Let us first assume that $i \neq i_w$. Quadratic reciprocity implies that
$\left<p_i(t_0,s_0), c\right>_{u^0_{i}} = \left<p_i(t_1,s_1),c\right>_{u^1_{i}}$ and hence by Lemma~\ref{l:b} we have 
$$ \left<p_i(t_0,s_0), cp_{\J}(t_0,s_0)\right>_{u^0_i} + \left<p_i(t_1,s_1), cp_{\J}(t_1,s_1)\right>_{u^1_i} = $$
$$ \left<p_i(t_0,s_0), p_{\J}(t_0,s_0)\right>_{u^0_i} + \left<p_i(t_1,s_1), p_{\J}(t_1,s_1)\right>_{u^1_i} = $$
$$ \sum_{j \in \J} \left<p_i(t_0,s_0), p_j(t_0,s_0)\right>_{u^0_i} + 
\sum_{j \in \J} \left<p_i(t_1,s_1), p_j(t_1,s_1)\right>_{u^1_i} = b|\J| .$$
Now let $i=i_w$ and assume that $cp_{\J}(t_1,s_1)$ is a square at $w$. For every $v \in T$ we have
$ \left<p_{i_w}(t_0,s_0), cp_{\J}(t_0,s_0)\right>_v = \left<p_{i_w}(t_1,s_1), cp_{\J}(t_1,s_1)\right>_v $
and so by using quadratic reciprocity and the fact that $cp_{\J}(t_1,s_1)$ is a square at $w$ we may conclude that
$$  \left<p_{i_w}(t_0,s_0), cp_{\J}(t_0,s_0)\right>_{u^0_{i_w}} + \left<p_{i_w}(t_1,s_1), cp_{\J}(t_1,s_1)\right>_{u^1_{i_w}} = $$
$$ \sum_{j \in \J}\left[\left<p_{i_w}(t_0,s_0), cp_{\J}(t_0,s_0)\right>_{u^0_j} + \left<p_{i_w}(t_1,s_1), cp_{\J}(t_1,s_1)\right>_{u^1_j} \right] = $$
$$ \sum_{j \in \J}\left[\left<p_{i_w}(t_0,s_0), p_j(t_0,s_0)\right>_{u^0_j} + \left<p_{i_w}(t_1,s_1), p_{j}(t_1,s_1)\right>_{u^1_j}\right] = b|\J| $$

\end{proof}

Our first comparison result relates the Selmer conditions of $\T_{(t_0,s_0)}$ and $\T_{(t_1,s_1)}$ at the places of $T$ and the places $u^0_i,u^1_i$ for $i \neq i_w$.
\begin{prop}\label{p:transfer-0}
Let $x = [c][p_{\J}] \in J_T$ be an element such that $|\J|$ is even. Then 
\begin{enumerate}[(1)]
\item
For every $v \in T$ we have $\ev_{(t_0,s_0)}(x) \in W_v(t_0,s_0) \Leftrightarrow \ev_{(t_1,s_1)}(x) \in W_v(t_1,s_1)$.
\item
For every $i \neq i_w \in \I$ we have $\ev_{(t_0,s_0)}(x) \in W_{u^i_0}(t_0,s_0) \Leftrightarrow \ev_{(t_1,s_1)}(x) \in W_{u^i_1}(t_1,s_1)$.
\end{enumerate}
\end{prop}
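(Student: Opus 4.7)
Plan. The proof of both parts rests on the $T$-admissibility of $(t_0,s_0)$ and $(t_1,s_1)$ with respect to $P_T$. Since $T\subseteq T_w$ and $P_{T_w}$ restricts to $P_T$ on $T$, the $T_w$-admissibility of $(t_1,s_1)$ with respect to $P_{T_w}$ automatically entails $T$-admissibility with respect to $P_T$, and in particular
$$ [p_i(t_0,s_0)]_v \;=\; [p_i(t_v,s_v)]_v \;=\; [p_i(t_1,s_1)]_v \qquad\text{in } \QQ_v^*/(\QQ_v^*)^2 $$
for every $v\in T$ and every $i\in\I$. This settles Part (1) at once: the evaluations $\ev_{(t_0,s_0)}(x)$ and $\ev_{(t_1,s_1)}(x)$ agree as classes in $V_v$, and the Selmer subspace $W_v(t,s)$ likewise agrees for the two pairs. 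Indeed, for $v\in T\cap T_0$ (the only case where $v\in T_0(t,s)$ for either pair, since $\{u_i\}\cap T=\emptyset$), $W^v(t,s)$ is the line generated by $[-dp_\I(t,s)]_v$, which depends only on the above classes; for $v\in T\setminus T_0$, $W^v(t,s)$ is the image of the local units and is independent of $(t,s)$.

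For Part (2), I would first apply Lemma~\ref{l:local-selmer} at the odd places $u^0_i$ and $u^1_i$ (both lie outside $S\supseteq S_0\cup S_{\bad}$ because $u^*_i\notin T\supseteq S$, and $(t_0,s_0)$, $(t_1,s_1)$ reduce modulo the appropriate maximal ideals to unit multiples of $(d_i,-c_i)$). Because $|\J|$ is even the factor $r^{|\J|}$ in the lemma is a square, so the condition $\ev_{(t,s)}(x)\in W_{u^*_i}(t,s)$ reduces in both cases to $[cD^{\J}_i]_{u^*_i}=0\in\FF_2$, and Part (2) is equivalent to the equality $[cD^{\J}_i]_{u^0_i}=[cD^{\J}_i]_{u^1_i}$. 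To establish this I would apply global Hilbert reciprocity to the pair $\bigl(cD^{\J}_i,\,p_i(t_0,s_0)\bigr)$. The element $cD^{\J}_i$ is a $T$-unit (its bad primes all lie in $S_{\bad}\subseteq T$), while $p_i(t_0,s_0)$ is a unit outside $T\cup\{u^0_i\}$ and a uniformizer at $u^0_i$. All other places contribute $0$, and the symbol at the odd place $u^0_i$ equals $[cD^{\J}_i]_{u^0_i}$, yielding
$$ [cD^{\J}_i]_{u^0_i} \;=\; \sum_{v\in T}\left<cD^{\J}_i,\,p_i(t_0,s_0)\right>_v. $$
The same computation for $(t_1,s_1)$, performed over the places $T_w\cup\{u^1_j\}_{j\in\I}$, gives an analogous formula, in which the crucial point is the vanishing of the contribution at $v=w$: since $i\neq i_w$, Remark~\ref{r:w} provides $p_i(t_w,s_w)\in\ZZ_w^*$, hence $[p_i(t_1,s_1)]_w=[p_i(t_w,s_w)]_w$ lies in the unit subspace of $V^w$, and the Hilbert symbol of two unit classes at the odd place $w$ is trivial. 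One thus obtains
$$ [cD^{\J}_i]_{u^1_i} \;=\; \sum_{v\in T}\left<cD^{\J}_i,\,p_i(t_1,s_1)\right>_v, $$
and the admissibility identity $[p_i(t_0,s_0)]_v=[p_i(t_1,s_1)]_v$ for $v\in T$ equates the two right-hand sides.

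The only genuine subtlety is localizing where $i\neq i_w$ is used: it is precisely what kills the Hilbert symbol at $w$ in the reciprocity calculation for $(t_1,s_1)$. If $i=i_w$ then $[p_{i_w}(t_1,s_1)]_w$ is the class of a uniformizer and a non-trivial correction term appears at $w$; tracking this discrepancy is exactly the point of Lemma~\ref{l:b} and Corollary~\ref{c:b}, which repair the argument in the excluded case further on.
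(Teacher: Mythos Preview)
Your proof is correct and, for Part~(2), actually more direct than the paper's. The paper routes through Corollary~\ref{c:b} (itself built on Lemma~\ref{l:b}): since $i\neq i_w$ and $|\J|$ is even, Corollary~\ref{c:b} gives
\[
\left<p_i(t_0,s_0),\,cp_{\J}(t_0,s_0)\right>_{u^0_i}+\left<p_i(t_1,s_1),\,cp_{\J}(t_1,s_1)\right>_{u^1_i}=b\cdot|\J|=0,
\]
and then Lemma~\ref{l:local-selmer} converts this into the statement about $cD^{\J}_i$. You instead apply Hilbert reciprocity directly to the pair $\bigl(cD^{\J}_i,\,p_i(t_*,s_*)\bigr)$, exploiting the fact that $cD^{\J}_i$ is a \emph{constant} $T$-unit rather than a function of $(t,s)$; this makes the support analysis immediate and bypasses the auxiliary constant $b$ of Lemma~\ref{l:b} altogether. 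The paper's detour is not wasted, however: Corollary~\ref{c:b} handles the correction term $b\cdot|\J|$ uniformly for $|\J|$ of either parity, and this full strength is genuinely needed later in Proposition~\ref{p:mazur-formula}(1), where odd-$|\J|$ elements of the weak dual Selmer group must be treated. Your closing remark correctly identifies that the excluded case $i=i_w$ is precisely where the extra machinery becomes essential.
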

\begin{proof}
Claim (1) follows from Condition(1) of Definition~\ref{d:admissible}. Now let $i \neq i_w$ be an element of $\I$. Since $|\J|$ is even we may combine Corollary~\ref{c:b} with Lemma~\ref{l:local-selmer} and conclude that
$$ \left<p_i(t_0,s_0), cD^{\J}_i\right>_{u^0_i} + \left<p_i(t_1,s_1), cD^{\J}_i\right>_{u^1_i} = 0 .$$
Since $\val_{u^0_i}p_i(t_0,s_0) = \val_{u^1_i}p_i(t_1,s_1) = 1$ and $cD^{\J}_i$ is an $S$-unit we see that Claim (2) now follows from Lemma~\ref{l:local-selmer}.
\end{proof}

Let us now turn our attention to comparing the weak dual Selmer groups of $\T_{(t_0,s_0)}$ and $\T_{(t_1,s_1)}$. Let $V_w, V^w$ be two copies of $H^1(\QQ_w,\ZZ/2)$. We have natural maps 
$$ \loc_w:J_{T_w} \lrar V_w $$ 
and 
$$ \loc^w:J^{T_w} \lrar V^w $$ 
obtained by composing $\ev_{(t_1,s_1)}$ (resp. $\ev^{(t_1,s_1)}$) with the localization map at $w$. 

\begin{define}\label{d:Q0}
For a set of places $T'$ and a suitable partial adelic point $P_{T'}$ we define the subgroups $\Q_{P_{T'}}^0 \subseteq \Q_{P_{T'}}$ and $\what{\Q}_{P_{T'}}^0 \subseteq \what{\Q}_{P_{T'}}$ to be the subgroups consisting of those elements 
$[c][p_{\J}]$ such that $i_w \notin \J$. 
\end{define}

\begin{rem}\label{r:Q_0}
In the setting of Definition~\ref{d:Q0} the $\FF_2$-vector space $\Q_{P_{T'}}$ is generated by $\Q^0_{P_{T'}}$ and $[d][p_{\I}]$. Since $[d][p_{\I}]$ does not belong to $\Q^0_{P_{T'}}$ we obtain a direct sum decomposition $\Q_{P_{T'}} \cong \Q^0_{P_{T'}} \oplus \FF_2\langle[d][p_{\I}]\rangle$. In a similar way, we have a direct sum decomposition $\what{\Q}_{P_{T'}} \cong \what{\Q}^0_{P_{T'}} \oplus \FF_2\langle[-d][p_{\I}]\rangle$.
\end{rem}

\begin{define}\label{d:P}
We denote by $P_0 \subseteq V_w$ be the image of $\Q^0_{P_{T}}$ via $\loc_w$ and by $P^0$ be the image of $\what{\Q}^0_{P_{T}}$ via $\loc^w$. Similarly, we denote by $P_1 \subseteq V_w$ be the image of $\Q^0_{P_{T_w}}$ via $\loc_w$ and by $P^1$ be the image of $\what{\Q}^0_{P_{T_w}}$ via $\loc^w$. 
\end{define}

\begin{rem}\label{r:dim-1}
By Definition~\ref{d:Q-hat} the elements of $\Q_{P_T}$ (resp. $\what{\Q}_{P_T}$) satisfy the Selmer (resp. dual Selmer) condition of $\T_{(t_1,s_1)}$ at $w$ and so $P_1 \subseteq W_w(t_1,s_1)$ (resp. $P^1 \subseteq W^w(t_1,s_1)$). In particular, we have $\dim_2P_1,\dim_2P^1 \leq 1$. On the other hand, by Remark~\ref{r:w} we have that $p_i(t_1,s_1)$ is a unit at $w$ for $i \neq i_w$ and since $w \notin T$ it follows that $P_0,P^0 \subseteq \ZZ_w^*/(\ZZ_w^*)^2$. In particular, $\dim_2P_0,\dim_2P^0 \leq 1$ as well.
\end{rem}

\begin{lem}\label{l:ortho}
The Hilbert pairing of any element in $P_0 \subseteq V_w$ with any element of $P^1 \subseteq V^w$ is trivial.
\end{lem}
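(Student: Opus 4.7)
My plan is to apply the global product formula for Hilbert symbols to the evaluated elements $\alpha := \ev_{(t_1,s_1)}(x)$ and $\beta := \ev^{(t_1,s_1)}(y)$ for arbitrary $x = [c][p_\J] \in \Q^0_{P_T}$ and $y = [c'][p_{\J'}] \in \what{\Q}^0_{P_{T_w}}$, where $|\J|, |\J'|$ are even and $i_w \notin \J, \J'$ by the definition of $\Q^0, \what{\Q}^0$. Since $c \in \OO_T^*$ is a unit at $w$ and each $p_i(t_1,s_1)$ for $i \in \J$ is a unit at $w$ (as $i \neq i_w$), the class $\loc_w(x)$ is a unit class in $V_w$, and the reciprocity identity $\sum_v \langle \alpha, \beta \rangle_v = 0$ will reduce to
\[
\langle \loc_w(x), \loc^w(y) \rangle_w = \sum_{i \in \I} \langle \alpha, \beta \rangle_{u^1_i}
\]
once I verify that all other local contributions are trivial. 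Outside $T_w \cup \{u^1_i\}_{i \in \I}$ both $\alpha$ and $\beta$ are units at the odd place $v$, so the Hilbert symbol vanishes. For $v \in T$, the $T_w$-admissibility of $(t_1,s_1)$ gives $[\alpha]_v = [\ev_{(t_0,s_0)}(x)]_v$ and $W_v(t_1,s_1) = W_v(t_0,s_0)$, so the strict Selmer condition from $x \in \Q_{P_T}$ places $\alpha|_v$ in $W_v(t_1,s_1)$, orthogonal to $\beta|_v \in W^v(t_1,s_1)$ supplied by $y \in \what{\Q}_{P_{T_w}}$; for $v \in T \setminus T_0$, both $W_v$ and $W^v$ equal the unit classes and the pairing vanishes.

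To compute $\sum_{i \in \I} \langle \alpha, \beta \rangle_{u^1_i}$, I will use Lemma~\ref{l:local-selmer} to express $[\alpha]_{u^1_i}$ as $[cD^\J_i]$ or $[cD^\J_i \cdot dp_\I(t_1,s_1)]$ according to $i \notin \J$ or $i \in \J$, and symmetrically $[\beta]_{u^1_i}$ in terms of $\what{D}^{\J'}_i$ and a possible factor $-dp_\I(t_1,s_1)$; the $r_i^{|\J|}$ and $r_i^{|\J'|}$ factors from that lemma are squares and drop out since $|\J|, |\J'|$ are even. Bilinearly expanding and using the odd-place identities $\langle A, B\rangle_{u^1_i} = 0$ for units $A, B$, the identity $\langle dp_\I(t_1,s_1), -dp_\I(t_1,s_1)\rangle_{u^1_i} = 0$, and $\langle A, \pm dp_\I(t_1,s_1)\rangle_{u^1_i} = \langle A, p_i(t_1,s_1)\rangle_{u^1_i}$ for $A$ a unit at $u^1_i$ (since $dp_{\I \setminus \{i\}}(t_1,s_1)$ is then a unit there), all four subcases will collapse to
\[
\sum_{i \in \I}\langle \alpha, \beta \rangle_{u^1_i} = \sum_{i \in \J'} \langle cD^\J_i, p_i(t_1, s_1)\rangle_{u^1_i} + \sum_{i \in \J} \langle p_i(t_1, s_1), c'\what{D}^{\J'}_i\rangle_{u^1_i}.
\]

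I then show each of these two sums vanishes independently. For the first, the strict Selmer condition on $x$ at $u^0_i$ gives $\langle cD^\J_i, p_i(t_0,s_0)\rangle_{u^0_i} = 0$ for every $i \in \I$; applying Hilbert reciprocity to the global pairs $(cD^\J_i, p_i(t_\epsilon,s_\epsilon))$ for $\epsilon = 0, 1$ and equating the $v \in T$ contributions via $T$-admissibility yields $\langle cD^\J_i, p_i(t_1,s_1)\rangle_{u^1_i} = 0$. For the second, I will exploit that $y$ lies in the right kernel of the pairing $W^{-}_{T(t_1,s_1)} \times I^{T(t_1,s_1)} \to \ZZ/2$: testing against the element of $W^{-}_{T(t_1,s_1)}$ supported at a pair of positions $u^1_i, u^1_j$ with coefficient $[dp_\I(t_1,s_1)]$ (which respects the parity constraint, using $|\I| \geq 2$) forces $\langle dp_\I(t_1,s_1), \beta\rangle_{u^1_i}$ to be independent of $i$, and a direct unfolding in both subcases $i \in \J'$ and $i \notin \J'$ shows this common value equals $\langle p_i(t_1,s_1), c'\what{D}^{\J'}_i\rangle_{u^1_i}$; summing over the even-sized set $\J$ then gives $|\J|$ times this constant, which is $0$ modulo $2$. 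The main technical obstacle will be the careful bookkeeping in the bilinear expansion at each $u^1_i$: keeping the four subcases indexed by the presence of $i$ in $\J$ and/or $\J'$ straight, and checking that the cross-terms assemble into precisely the two-sum decomposition above.
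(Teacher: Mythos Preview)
Your overall strategy via the product formula is sound, but there is a concrete gap: you assert that $|\J'|$ is even for $y=[c'][p_{\J'}]\in\what{\Q}^0_{P_{T_w}}$, and use this to drop the $r_i^{|\J'|}$ factors coming from Lemma~\ref{l:local-selmer}. This is not justified. The parity constraint $\sum_{v\in\{u_i\}}\val_v(\cdot)\equiv 0$ defines $W^{-}_{S'}$ and hence the \emph{strict} Selmer side; the \emph{weak dual} Selmer group is $I^{S'}\cap W_{+}^{S'}$, where $W_{+}^{S'}$ is the orthogonal complement of $W^{-}_{S'}$ and carries no such parity condition. Indeed the proof of Proposition~\ref{p:main-step} explicitly treats elements of $\what{\Q}_{P_T}$ with $|\J|$ odd. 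The fix is easy: keep the factor $r_i^{|\J'|}$ throughout. Your second sum then becomes $\sum_{i\in\J}\langle p_i(t_1,s_1),\,c'r_i^{|\J'|}\what{D}^{\J'}_i\rangle_{u^1_i}$, and the weak dual Selmer characterisation (Lemma~\ref{l:explicit-2}, which is stated for all $i\in\I$ as its proof shows) gives that this quantity is a constant $b$ independent of $i$; summing over the even-sized set $\J$ still yields $0$. Your first sum is fine as written, since $|\J|$ \emph{is} even on the strict Selmer side.

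It is worth noting that the paper's own argument is considerably shorter. Rather than expanding $\sum_{i\in\I}\langle\alpha,\beta\rangle_{u^1_i}$ by cases, it observes that at $u^1_{i_w}$ both $\alpha$ and $\beta$ are units (since $i_w\notin\J,\J'$), while at each $u^1_i$ with $i\neq i_w$ Proposition~\ref{p:transfer-0} transfers the Selmer condition for $x$ from $(t_0,s_0)$ to $(t_1,s_1)$, so that $\alpha\in W_{u^1_i}(t_1,s_1)$ and $\beta\in W^{u^1_i}(t_1,s_1)$ are automatically orthogonal. Your computation is in effect re-deriving the content of Proposition~\ref{p:transfer-0} and Lemma~\ref{l:explicit-2} inside the proof; citing them instead collapses the argument to a few lines.
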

\begin{proof}
Let $x = [c][p_{\J}] \in \Q^0_{P_T}$ and $x' = [c'][p_{\J'}]\in \what{\Q}^0_{P_{T_w}}$ be elements.
By Definition~\ref{d:Q0} we have $i_w \notin \J$ and $i_w \notin \J'$. It follows that both $\ev_{(t_1,s_1)}(x)$ and $\ev_{(t_1,s_1)}(x')$ are units at $u^1_{i_w}$, and hence 
$$ \left<\ev_{(t_1,s_1)}(x),\ev^{(t_1,s_1)}(x')\right>_{u^1_w} = 0 .$$
On the other hand, for every $v \in T$ and every $v = u^1_i$ with $i \neq i_w$ we have from Proposition~\ref{p:transfer-0} that $\ev_{(t_1,s_1)}(x) \in W_v(t_1,s_1)$. Now, by Definition~\ref{d:Q-hat} we have $\ev^{(t_1,s_1)}(x') \in W^v(t_1,s_1)$ and since $W_v(t_1,s_1)$ is orthogonal to $W^v(t_1,s_1)$ it follows that
$$ \left<\ev_{(t_1,s_1)}(x),\ev^{(t_1,s_1)}(x')\right>_v = 0 $$
for these $v$'s as well. Since $\ev_{(t_1,s_1)}(x)$ and $\ev^{(t_1,s_1)}(x')$ are units outside $T_w(t_1,s_1)$ it follows from quadratic reciprocity that
$$ \left<\ev_{(t_1,s_1)}(x),\ev^{(t_1,s_1)}(x')\right>_w = \sum_{v \in T \cup \{u^1_i\}_{i \in \I}}\left<\ev_{(t_1,s_1)}(x),\ev^{(t_1,s_1)}(x')\right>_v = 0 $$
as desired.
\end{proof}

We are now in a position to compare the weak dual Selmer groups $\what{\Q}_{P_T}$ and $\what{\Q}_{P_{T_w}}$.

\begin{prop}\label{p:mazur-formula}\
\begin{enumerate}[(1)]
\item
Let $x = [c][p_{\J}] \in \what{\Q}^0_{P_{T_w}}$ be an element such that $\loc^w(x) = 0$. Then $x$ belongs to $\what{\Q}^0_{P_T}$.
\item
If both $P_0$ and $P^0$ are non-zero then $P^1 = \{0\}$ and
$ \what{\Q}^0_{P_{T_w}} \subsetneq \what{\Q}^0_{P_T} $.
\end{enumerate}
\end{prop}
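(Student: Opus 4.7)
Both parts rest on the explicit characterization of $\what{\Q}^0_{P_{T'}}$ from Lemma~\ref{l:explicit-2}, combined with the transfer formula of Corollary~\ref{c:b} and the orthogonality Lemma~\ref{l:ortho}. For part (1), given $x = [c][p_{\J}] \in \what{\Q}^0_{P_{T_w}}$ with $\loc^w(x) = 0$, I would first verify that $x$ actually lives in $J^T$: since $i_w \notin \J$, the value $p_{\J}(t_1,s_1)$ is a unit at $w$, so the hypothesis $\loc^w(x) = [c p_{\J}(t_1,s_1)]_w = 0$ forces $\val_w(c)$ to be even, and combined with $c \in I^{T_w}$ this gives $c \in I^T$. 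Then I would verify the two conditions of Lemma~\ref{l:explicit-2} for $x$ relative to the $T$-admissible pair $(t_0,s_0)$. Condition (1) for $v \in T$ follows immediately from Remark~\ref{r:not-depend}, which ensures $W^v(t_0,s_0) = W^v(t_1,s_1)$ and $[\ev^{(t_0,s_0)}(x)]_v = [\ev^{(t_1,s_1)}(x)]_v$.

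For condition (2) the idea is to substitute $y = x[-d][p_{\I}] = [-cd][p_{\J^c}]$. By Lemma~\ref{l:local-selmer}, for every $i \in \J$ the Hilbert pairing $\left<p_i(t,s), cr_i^{|\J|}\what{D}^{\J}_i\right>_{u_i}$ coincides with $\left<p_i(t,s), -cd\, p_{\J^c}(t,s)\right>_{u_i}$. Since every $i \in \J$ satisfies $i \neq i_w$, Corollary~\ref{c:b} applies to $y$ without encountering its exceptional case, yielding
\[
\left<p_i(t_0,s_0), -cd\, p_{\J^c}(t_0,s_0)\right>_{u^0_i} + \left<p_i(t_1,s_1), -cd\, p_{\J^c}(t_1,s_1)\right>_{u^1_i} = b|\J^c|,
\]
with $b$ the constant of Lemma~\ref{l:b}. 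Since $x \in \what{\Q}^0_{P_{T_w}}$ provides, via Lemma~\ref{l:explicit-2}, a constant $b_1$ for the second summand as $i$ ranges over $\J$, the first summand is also constant, yielding condition (2) with $b_0 = b|\J^c| + b_1$ and completing part (1).

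For part (2), the first goal is to show $P^1 = 0$. The suitability of $P_{T_w}$ together with Assumption~\ref{a:w} ensures $\val_w(dp_{\I}(t_1,s_1)) = 1$, so $w$ belongs to $T_{w,0}$ and $W^w(t_1,s_1)$ is the line in $V^w$ spanned by the odd-valuation class $[-dp_{\I}(t_1,s_1)]_w$. By Remark~\ref{r:dim-1}, $P^1 \subseteq W^w(t_1,s_1)$ while $P_0 \subseteq \ZZ_w^*/(\ZZ_w^*)^2$; since $P_0 \neq 0$, it must be the full line of non-trivial unit square classes at $w$. Because $w$ is odd, the Hilbert pairing at $w$ of a non-square unit with a non-square of odd valuation is non-trivial, so if $P^1$ were non-trivial (hence equal to $W^w(t_1,s_1)$), $P_0$ and $P^1$ would fail to be orthogonal, contradicting Lemma~\ref{l:ortho}; thus $P^1 = 0$. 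With $P^1 = 0$, every $x \in \what{\Q}^0_{P_{T_w}}$ satisfies $\loc^w(x) = 0$, and part (1) delivers $\what{\Q}^0_{P_{T_w}} \subseteq \what{\Q}^0_{P_T}$. For strictness, $P^0 \neq 0$ supplies some $x \in \what{\Q}^0_{P_T}$ with $\loc^w(x) \neq 0$; since $x$ lies in $J^T$, the class $\loc^w(x)$ is a non-zero unit square class, which does not belong to the odd-valuation line $W^w(t_1,s_1)$, preventing $x$ from lying in $\what{\Q}^0_{P_{T_w}}$. The main technical hurdle I anticipate is the bookkeeping in part (1), especially the passage between the intrinsic Hilbert pairings of Lemma~\ref{l:explicit-2} and the symmetric form needed to invoke Corollary~\ref{c:b}, together with the observation that its exceptional $i = i_w$ case is avoided precisely because $i_w \notin \J$.
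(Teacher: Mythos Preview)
Your overall strategy for both parts matches the paper's, and part (2) is correct as written (the paper phrases strictness via $\what{\Q}^0_{P_{T_w}} \subseteq \ker(\loc^w|_{\what{\Q}^0_{P_T}})$, while you exhibit directly an element failing the dual Selmer condition at $w$; these are equivalent). Part (1), however, has a genuine gap.

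The gap stems from a typo in the statement of Lemma~\ref{l:explicit-2}: condition (2) there must range over all $i \in \I$, not only $i \in \J$. This is clear from the proof of that lemma (the condition at each $u_i$ for $i \in \I$ is reformulated), and from how it is used in the proof of Corollary~\ref{c:indep}. Taking the typo at face value, you only transfer the constancy of $\left<p_i(t,s), cr_i^{|\J|}\what{D}^{\J}_i\right>_{u_i}$ for $i \in \J$, by applying Corollary~\ref{c:b} to $y = x[-d][p_{\I}]$. The indices $i \in \J^c$ are missing, and among them sits $i = i_w$. For those $i$ one applies Corollary~\ref{c:b} to $x$ itself, and at $i = i_w$ this is precisely the exceptional clause of that corollary, whose hypothesis ``$cp_{\J}(t_1,s_1)$ is a square at $w$'' is exactly $\loc^w(x) = 0$. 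So your closing remark that the exceptional case is ``avoided precisely because $i_w \notin \J$'' has it backwards: the fact that $i_w \in \J^c$ is the one place where the full strength of $\loc^w(x) = 0$ is needed beyond the reduction $c \in I^T$. The resulting transfer constant for $i \in \J^c$ is $b|\J|$, which agrees with the constant $b|\J^c|$ you obtain for $i \in \J$ because $|\I|$ is even; this parity check is exactly what the paper records as $b'|\J^c| = b'|\J|$. With this addition your argument is complete and coincides with the paper's.
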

\begin{proof}
Let us first prove (1). Since $x \in \what{\Q}^0_{P_{T_w}}$ we have $i_w \notin \J$ by Definition~\ref{d:Q0}. Since furthermore $\loc^w(x) = 0$ it follows that $c$ is a unit at $w$, i.e., $c \in \ZZ_T^*$. Now since $\ev^{(t_1,s_1)}(x) \in W^v(t_1,s_1)$ for every $v \in T$, Condition (1) of Definition~\ref{d:admissible} implies that $\ev^{(t_0,s_0)}(x) \in W^v(t_0,s_0)$ for every $v \in T$. By Lemma~\ref{l:explicit-2} there exists a $b \in \ZZ/2$ such that for every $i \in \J^c$ we have
$\left<p_i(t_1,s_1), \ev^{(t_1,s_1)}(x)\right>_{u^1_i} = b$ and for every $i \in \J$ we have $\left<p_i(t_1,s_1), \ev^{(t_1,s_1)}(x[-d][p_{\I}])\right>_{u^1_i} = b$. By our assumption we have that $\ev_{(t_1,s_1)}(x)$ is a square at $w$. Applying Lemma~\ref{l:b} and Corollary~\ref{c:b}, we may deduce the existence of a $b' \in \ZZ/2$ such that for every $i \in \J^c$ (including $i=i_w$) we have
$$  \left<p_i(t_0,s_0), \ev^{(t_0,s_0)}(x)\right>_{u^0_i} + \left<p_i(t_1,s_1), \ev^{(t_1,s_1)}(x)\right>_{u^1_i} = b'\cdot|\J| \in \ZZ/2 $$
and for every $i \in \J$ (which means in particular that $i \neq i_w$) we have
$$  \left<p_i(t_0,s_0), \ev^{(t_0,s_0)}(x[-d][p_{\I}])\right>_{u^0_i} + \left<p_i(t_1,s_1), \ev^{(t_1,s_1)}(x[-d][p_{\I}])\right>_{u^1_i} = b'\cdot|\J^c| = b'\cdot|\J| \in \ZZ/2 .$$
It follows that $\ev_{(t_0,s_0)}(x)$ satisfies the condition described in Lemma~\ref{l:explicit-2} with respect to $b + b'|\J|$, and so $x \in \what{\Q}^0_{P_T}$ as desired.

To prove (2), observe that by remark~\ref{r:dim-1} the condition $P_0 \neq \{0\}$ is equivalent to $P_0 = \ZZ_w^*/(\ZZ_w^*)^2$. Lemma~\ref{l:ortho} then implies that $P^1 \subseteq \ZZ_w^*/(\ZZ_w^*)^2$. By Remark~\ref{r:dim-1} we also have $P^1 \subseteq W^w(t_1,s_1)$. Since the intersection of $W^w(t_1,s_1)$ with the subspace $\ZZ_w^*/(\ZZ_w^*)^2 \subseteq V^w$ is trivial it follows that $P^1 = \{0\}$ and hence $\loc^w(x) = 0$ for every $x \in \what{\Q}^0_{P_{T_w}}$. By claim (1) we may conclude that $\what{\Q}^0_{P_{T_w}} \subseteq \what{\Q}^0_{P_T}$. Furthermore, $\what{\Q}^0_{P_{T_w}}$ is contained in the kernel of the map 
$$ \loc^w: \what{\Q}^0_{P_T} \lrar V^w .$$ 
Since $P^0 \neq \{0\}$ this kernel is strictly smaller than $\what{\Q}^0_{P_T}$ and so we obtain a strict inclusion
$$ \what{\Q}^0_{P_{T_w}} \subsetneq \what{\Q}^0_{P_T} $$ 
as desired.
\end{proof}

\subsection{Proof of the main theorem}

In this section we will finish the proof of Theorem~\ref{t:main-uncond}. 
\begin{prop}\label{p:main-step}
Let $T$ be a finite set of places containing $S$, let $P_T = \{P_v\}_{v \in T}$ be a suitable partial adelic point and let $(t_0,s_0)$ be a $T$-admissible pair with respect to $P_T$ with associated places $\{u^0_i\}_{i \in \I}$. Let $[x] \in \what{\Q}_{P_T}$ be such that $[x] \notin \{0,[-d][p_{\I}]\}$. Then there exists a place $w$ and an extension of $P_T$ to a suitable partial adelic point $P_{T_w} = \{P_v\}_{v \in T_w}$ (with $T_w = T \cup \{w\}$) such that 
$\what{\Q}_{P_{T_w}} \subsetneq \what{\Q}_{P_T}$.
\end{prop}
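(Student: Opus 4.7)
I will apply Proposition~\ref{p:mazur-formula}(2): it suffices to find $w \notin T$ together with a local point $P_w \in \X(\ZZ_w)$ such that $P_{T_w} := (P_v)_{v \in T_w}$ is suitable and both of the images $P_0, P^0$ of Definition~\ref{d:P} are non-zero. In view of the direct sum decomposition $\what{\Q}_{P_T} = \what{\Q}^0_{P_T} \oplus \FF_2 \langle [-d][p_\I]\rangle$ of Remark~\ref{r:Q_0}, this will force $\what{\Q}_{P_{T_w}} \subsetneq \what{\Q}_{P_T}$. Using the hypothesis $x \notin \{0, [-d][p_\I]\}$, after replacing $x$ by $x \cdot [-d][p_\I]$ if necessary I may assume $x = [c][p_\J] \in \what{\Q}^0_{P_T}$, that is, $i_w \notin \J$ for the index $i_w$ chosen below.

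To produce a partner element $x' \in \Q^0_{P_T}$ I use the rank comparison of Remark~\ref{r:ranks-2}: condition (4) of Definition~\ref{d:suitable} guarantees $|S_\Split| \geq 2$, whence $\dim_2 \Q_{P_T} \geq \dim_2 \what{\Q}_{P_T} + 1$. Since both $[a][p_\cA]$ and $[d][p_\I]$ lie in $\Q_{P_T}$ by a direct verification, and the hypothesis on $x$ forces $\dim_2 \what{\Q}_{P_T} \geq 2$, we obtain $\dim_2 \Q_{P_T} \geq 3 > 2$, so there exists $x' = [c'][p_{\J'}] \in \Q_{P_T} \setminus \langle [a][p_\cA], [d][p_\I]\rangle$. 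After possibly flipping $x'$ by $[d][p_\I]$ I arrange $\J \cup \J' \subsetneq \I$ and fix $i_w \in \I \setminus (\J \cup \J')$. A short combinatorial argument leveraging Condition~(D) -- which says $\G^D = \langle [-d][p_\I]\rangle$ and $\G_D = \langle [a][p_\cA], [d][p_\I]\rangle$ -- shows that, by perhaps further flipping among the four candidates $\{x, x[-d][p_\I], x', x'[d][p_\I]\}$ while preserving $i_w \notin \J \cup \J'$, one can simultaneously arrange $x \notin \G^{i_w}$ and $x' \notin \G_{i_w}$. Unpacking via Definition~\ref{d:G_i}, this means the classes $[cD^\J_{i_w}]$ and $[c'D^{\J'}_{i_w}]$ both lie outside $\{1, [aD^\cA_{i_w}]\} \subseteq \QQ^*/(\QQ^*)^2$.

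Chebotarev's density theorem then produces a place $w \notin S \cup T$ at which $[aD^\cA_{i_w}]$ is a square while $[cD^\J_{i_w}]$ and $[c'D^{\J'}_{i_w}]$ are non-squares; these conditions are compatible thanks to the previous step. Choose $(t_w, s_w) \in \ZZ_w^2$ reducing to $(d_{i_w}, -c_{i_w})$ modulo $w$, so $p_{i_w}(t_w, s_w)$ is a uniformizer while the remaining $p_j(t_w, s_w)$ are units. Lemma~\ref{l:fiber} produces a point $P_w \in \X(\ZZ_w)$ above $(t_w, s_w)$. The extended family $P_{T_w}$ is suitable: conditions (1), (4), (6) of Definition~\ref{d:suitable} are inherited from $P_T$ or vacuous at $w \notin S_D$; (2) holds at $w$ because $\val_w(dp_\I(t_w, s_w)) = 1$; (3) is inherited because $2 \in S \subseteq T$; and (5) at $w$ follows from Lemma~\ref{l:good}.

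Finally, Lemma~\ref{l:local-selmer} identifies $\loc^w(x)$ and $\loc_w(x')$, up to squares at $w$, with $cD^\J_{i_w}$ and $c'D^{\J'}_{i_w}$ respectively, both non-trivial by the choice of $w$. Hence $P^0, P_0 \neq 0$, and Proposition~\ref{p:mazur-formula}(2) delivers $\what{\Q}_{P_{T_w}} \subsetneq \what{\Q}_{P_T}$. The main obstacle is the combinatorial juggling among the four candidate pairs in order to simultaneously secure $i_w \notin \J \cup \J'$, $x \notin \G^{i_w}$, and $x' \notin \G_{i_w}$; this step crucially exploits Condition~(D) and can require separate treatment when $|\I|$ is small.
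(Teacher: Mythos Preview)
Your overall strategy matches the paper's---reduce to producing a place $w$ with $P_0,P^0 \neq 0$ and invoke Proposition~\ref{p:mazur-formula}(2)---but there are two genuine gaps in the execution.

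First, you never address the possibility that $|\J|$ is odd. Elements of the weak dual Selmer group $\what{\Q}_{P_T}$ need not have $|\J|$ even (unlike elements of $\Q_{P_T}$), and when $|\J|$ is odd the statement ``$x \notin \G^{i_w}$'' is vacuous: Definition~\ref{d:G_i} only constrains elements with $|\J|$ even, so Condition~(D) gives you no information about the class $[cD^{\J}_{i_w}]$. The paper treats this case by a completely different mechanism. One first chooses $w$ via Chebotarev using only the constraints on $aD^{\cA}_{i_w}$ and $c'D^{\J'}_{i_w}$ (note $|\J'|$ is always even), and then rescales $(t_w,s_w)$ by a unit $u \in \ZZ_w^*$. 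Since $\ev^{(t_w,s_w)}(x)$ acquires a factor $u^{|\J|}$ and $|\J|$ is odd, choosing $u$ to be a non-square unit forces $\loc^w(x)$ to be a non-square unit regardless of what $[cD^{\J}_{i_w}]$ is. This scaling leaves $\loc_w(x')$ unchanged precisely because $|\J'|$ is even.

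Second, even when $|\J|$ is even, the ``short combinatorial argument leveraging Condition~(D)'' does not do what you claim. Condition~(D) only guarantees $x' \notin \G_i$ for \emph{some} $i$, which need not be the index $i_w$ determined by $x$; and once $i_w$ is fixed, any ``further flipping'' of $x$ or $x'$ sends $\J$ to $\J^c$ (resp.\ $\J'$ to $(\J')^c$), which puts $i_w$ back into $\J \cup \J'$---so there is no flipping left to perform. What actually closes this gap is Corollary~\ref{c:special}: because $x' \in \Q_{P_T}$ and the suitable adelic point $P_T$ is pinned down at the places of $S_D$ (Construction~\ref{c:S_D}), membership of $x'$ in $\G_i$ for even a single $i$ would force $x' \in \G_D$. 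Since $x' \notin \G_D$ by construction, one gets $x' \notin \G_i$ for \emph{every} $i$, in particular for the $i_w$ dictated by $x$. This is the step you are missing; Condition~(D) alone is not enough.
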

\begin{proof}
Let $[x] = [c][p_{\J}] \in \what{\Q}_{P_T}$ (with $\J \subseteq \I$ and $c \in I^T$) be such that $[x] \notin \{0,[-d][p_{\I}]\}$. By our assumptions there are at least two places $v^1_\infty, v^2_\infty \in S_0$ which split in $\QQ(\sqrt{-dp_{\I}(t_0,s_0)})$. In light of Remark~\ref{r:ranks-2} we may deduce that 
$$ \dim_2\Q_{P_T} > \dim_2\what{\Q}_{P_T} \geq 2 .$$ 
We may hence conclude that there exists a $x' = [c'][p_{\J'}] \in \Q_{P_T}$ (with $\J' \subseteq \I$ and $c' \in I_T$) such that $x'$ does not belong to the group generated by $[a][p_{\cA}]$ and $[d][p_{\I}]$. We now separate our strategy into two cases:
\begin{enumerate}[(1)]
\item
If $|\J|$ is even, then by Condition (D) there exists an $i_x \in \I$ such that $c\what{D}^{\J}_{i_x} \notin \left\{1,\left[aD^{\cA}_{i_x}\right]\right\}$. In light of Remark~\ref{r:complement} we may, by possibly replacing $x$ with $x[-d][p_{\I}]$ and $x'$ with $x'[d][p_{\I}]$, assume that $i_x \in \J^c \cap (\J')^c$. By combining Corollary~\ref{c:special} with Condition (D) we may also conclude that $\left[c'D^{\J'}_{i_x}\right] \notin \left\{1,\left[aD^{\cA}_{i_x}\right]\right\}$. Applying Chebotarev's density theorem we may deduce the existence of a place $w$ such that $aD^{\cA}_{i_x}$ is a square at $w$ but $c\what{D}^{\J}_{i_x}$ and $c'D^{\J'}_{i_x}$ are non-squares at $w$. Let $t_w,s_w \in \ZZ_w$ be such that $p_{i_x}(t_w,s_w)$ is a uniformizer. 
\item
If $|\J|$ is odd then we can still use Corollary~\ref{c:special} to show that for any $i_x \in \I$ we have $\left[c'D^{\J'}_{i_x}\right] \notin \left\{1,\left[aD^{\cA}_{i_x}\right]\right\}$ (recall that $|\J'|$ is even by the definition of $\Q_{P_T}$ and the strict Selmer group). By possibly replacing $x'$ with $x'[d][p_{\I}]$ we may assume that $\J^c \cap (\J')^c \neq \emptyset$. Choose an $i_x \in \J^c \cap (\J')^c$ and apply Chebotarev's density theorem to deduce the existence of a place $w$ such that $aD^{\cA}_{i_x}$ is a square at $w$ but $c'D^{\J'}_{i_x}$ is a non-square at $w$. Let $t_w,s_w \in \ZZ_w$ be such that $p_{i_x}(t_w,s_w)$ is a uniformizer. Since $|\J|$ is odd we can, by possibly multiplying both $t_w$ and $s_w$ by a $u \in \ZZ_w^*$, assume that $[\ev_{(t_w,s_w)}(x)]$ is a non-square unit at $\ZZ_w$. 
\end{enumerate}

In either of the two cases, Lemma~\ref{l:fiber} implies that the fiber $\X_{(t_w,s_w)}$ has an $\ZZ_w$-point. It follows that we may extend $P_T$ to a suitable partial adelic point $P_{T_w} = (P_v)_{v \in T_s}$ indexed by $T_w = T \cup \{w\}$, where $P_w$ is a point lying above $(t_w,s_w)$. By Proposition~\ref{p:fibration} we may find a $T_w$-admissible pair $(t_1,s_1)$ with respect to $P_{T_w}$ and we denote by $\{u^1_i\}_{i \in \I}$ be the associated places. For $j=0,1$ consider the $\FF_2$-vector spaces $P_j$ and $P^j$ of Definition~\ref{d:P}. By our construction we have $x \in \what{\Q}^0_{P_T}$ and $x' \in \Q^0_{P_T}$. Furthermore, by Lemma~\ref{l:local-selmer} and by our choice of $w$ we have that $\loc^w(x) \neq 0 \in V^w$ and $\loc_w(x') \neq 0 \in V_w$. We may hence conclude that $P^0,P_0 \neq \{0\}$. By Proposition~\ref{p:mazur-formula} we now get 
$$ \what{\Q}^0_{P_{T_w}} \subsetneq \what{\Q}^0_{P_T} $$ 
and by Remark~\ref{r:Q_0} it follows that 
$$ \what{\Q}_{P_{T_w}} \subsetneq \what{\Q}_{P_T} $$ 
as desired.

\end{proof}

\begin{proof}[Proof of Theorem~\ref{t:main-uncond}]
Starting with a suitable partial adelic point $P_S = (P_v)_{v \in S}$ (whose existence is guaranteed by Proposition~\ref{p:suitable}) we may apply Proposition~\ref{p:fibration} to find an $S$-admissible pair $(t_0,s_0)$ with respect to $P_S$. By repeated applications of Proposition~\ref{p:main-step} we may find a finite subset of places $T$ containing $S$ and extend $P_S$ to a suitable partial adelic point $P_T$ such that for every $T$-admissible pair $(t_1,s_1)$ with respect to $P_T$ the weak dual Selmer group
$\Sel^+(\what{\T}_{(t_1,s_1)}, T_0(t_1,s_1), \{u_i\}_{i \in \I})$ is generated by $[-dp_{\I}(t_1,s_1)]$ and hence the true dual Selmer group
$$ \Sel(\what{\T}_{(t_1,s_1)}, T_0(t_1,s_1)) \subseteq \Sel^+(\what{\T}_{(t_1,s_1)}, T_0(t_1,s_1), \{u_i\}_{i \in \I} ) $$ 
is also generated by $[-dp_{\I}(t_1,s_1)]$. In light of Remark~\ref{r:control} we may apply Corollary~\ref{c:final-step} to deduce that $\X_{(t_1,s_1)}$ has an $S_0$-integral point.
\end{proof}

%\bigskip
%\footnotesize
%\noindent\textit{Acknowledgments.}
%The author would like to thank the Foundation Science Math\'emathique de Paris for their support.

\end{document}